\setcounter{secnumdepth}{6}

\documentclass[11pt,a4paper]{article}

\usepackage{amsmath}
\usepackage{amsfonts}
\usepackage{amsthm}
\usepackage{amssymb}
\usepackage{url}
\usepackage[usenames,dvips]{color}
\usepackage{stmaryrd}
\usepackage[pdftex]{graphicx}

\DeclareMathAlphabet{\mathpzc}{OT1}{pzc}{m}{it}

\vfuzz2pt 
\hfuzz2pt 
\oddsidemargin=3mm \evensidemargin=3mm \topmargin=-5mm
\textheight=225mm \textwidth=160mm

\newtheorem{theorem}{Theorem}[section]
\newtheorem{lemma}[theorem]{Lemma}

\newtheorem{remark}[theorem]{Remark}

\newtheorem{proposition}[theorem]{Proposition}

\newcommand{\RR}{\mathbb{R}}
\newcommand{\NN}{\mathbb{N}}
\newcommand{\CC}{\mathbb{C}}

\newcommand{\PP}{\mathcal{P}}

\newcommand{\II}{\mathcal{I}}
\newcommand{\BB}{\mathcal{B}}

\newcommand{\X}{X^+}
\newcommand{\Y}{X^-}
\newcommand{\x}{x_h}
\newcommand{\y}{y_h}
\newcommand{\OO}{\mathcal{O}}

\newcommand{\FF}{\mathcal{F}}

\newcommand{\VV}{\mathcal{V}}

\newcommand{\JJ}{\mathcal{J}}


\newcommand{\eps}{\varepsilon}
\newcommand{\ga}{\gamma}

\newcommand{\la}{\lambda}
\newcommand{\al}{\alpha}

\newcommand{\be}{\begin{equation}}
\newcommand{\ee}{\end{equation}}

\newcommand{\D}{\Omega}



\begin{document}

\title{Regularization of sliding global bifurcations derived from the   local fold singularity of Filippov systems.}
\author{Carles Bonet Rev\'{e}s\thanks{\tt carles.bonet@upc.edu} \ and  Tere M. Seara\thanks{\tt tere.m-seara@upc.edu}}
\maketitle


\begin{abstract}
In this paper we study the Sotomayor-Teixeira regularization of a general visible fold singularity of a Filippov system. 
Extending Geometric Fenichel Theory beyond the fold with asymptotic methods, we determine there the deviation of the orbits
of the regularized system  from the generalized solutions of the Filippov one. 
This result is applied to the regularization of some global sliding bifurcations as the Grazing-Sliding of periodic orbits 
or the Sliding Homoclinic to a Saddle, as well as to some classical problems in dry friction. 

Roughly speaking, we see that locally, and also globally, the regularization of the bifurcations preserve the topological features of the sliding ones.      
\end{abstract}
\section{Introduction}

In recent years there has been an increasing research in piecewise differentiable vector fields. 
This kind of systems model many phenomena in control theory, in mechanical friction and impacts, in hysteresis in electrical circuits and plasticity, 
etc... See \cite{diBernardoBCK08} for a general scope of the matter. 
In a piecewise differentiable vector field the phase space is divided into several regions where the system takes different smooth forms. 
The degree of discontinuity in the edge between two adjacent regions, usually called switching manifold, is used to classify them. 
Vector fields  with jump discontinuities are usually named Filippov Systems.

In Filippov systems the derivatives of the state variables are no longer uniquely determined as at the switching manifold they can take 
values in a whole interval. 
For the study of these systems, it has been generalized the concept of differential equation to a more general differential inclusion.
The theory developed for these systems has succeeded to proof, under general conditions, 
theorems related to the existence and uniqueness of solutions (\cite{Kunze00b}). 
Moreover, over the switching manifold, using the Filippov convention (\cite{Filippov88}), 
one can define a vector field made up from a  certain linear convex combination of two adjacent equations.
Although other possible conventions can be more suitable in some cases, as the Utkin's equivalent control (\cite{Utkin}),  
in this paper we restrict ourselves to the Filippov convention.

The non-smooth mathematical models are often a discontinuous idealization of regular phenomena where the phase space is divided into regions 
where the variables have different orders of behavior (slow-fast regions, for example). 
It is natural to ask if the generalized solutions of these discontinuous models are close to the solutions of the corresponding real regular ones. 
A natural question is whether a discontinuous system can be embedded in a set of parametric regular systems in such a manner that the discontinuous 
one will be, in some sense, their limit.
But as noted in \cite{Utkin}, not only there is not an unambiguous regularization technique but different regularization techniques can lead 
to different ways of defining the edge solutions. 
The way  chosen will depend on their suitability to model the problem. 
For example in the case of dry friction systems that we deal with in section \ref{dryfriction}, 
the regularization should be different if we use the stiction friction model or the Coulomb model, in spite of both models are identical 
outside the switching manifold.

In this paper we work with Filippov systems in the plane and we use the regularization method proposed by Sotomayor and Teixeira \cite{SotomayorT95}, 
based in replacing the two adjacent fields by an $\eps$-parametric field built as a linear convex combination of them in a $\eps$-neighbourhood 
of the switching manifold. The regularized system so obtained is a slow-fast system on the plane.

It is known \cite{TeixeiraBuzziSilva,TeixeiraS12} that, under general conditions, in some compact regions near the switching manifold  
(the so-called  sliding and escaping zones which do not contain 
the tangency points between the vector fields and the switching manifold) 
the regularized system has, for small values of the parameter $\eps$, a normally hyperbolic invariant manifold 
(attracting near the sliding region or repelling near the scaping one) which is $\eps$-close to the switching manifold. 
Furthermore, the flow of the regularized vector field reduced to  this invariant manifold 
tends to the Filippov flow.

Therefore, the  results in  \cite{TeixeiraBuzziSilva,TeixeiraS12} give a  partial positive answer to the main question of this paper: 
the solutions of the regularized vector field are well approximated by the Filipov ones  in these regions. 
This result can be proved in several ways but for ours aims we stress the methods issued from the geometrical theory of singular 
perturbation of N.Fenichel and others \cite{Fenichel79,Jones94,Kaper99}.
 
But as one approaches to a boundary of the sliding (or scaping) region, that is, a point of tangency of 
one of the vector fields with the switching manifold (called in \cite{GuardiaST11}  fold-regular point) this theory fails 
because the tangency point of the Filippov vector field  creates a fold point in the  slow 
manifold of the regularized vector field and, therefore, the invariant manifold looses its hyperbolicity.
At this stage, the theory needs to be combined with other tools, like asymptotic or blow-up methods, to understand the behavior 
of the manifold near the fold point.

In \cite{KutznesovRG03, GuardiaST11}, a systematic topological classification and normal forms for different types of tangency points 
of Filippov vector fields and their bifurcations is made. 
It is therefore natural to study the regularization of these normal forms to determine in 
which cases the dynamics of the regularized normal forms 
moves towards  the corresponding one in the Filippov system. 
Although  in this paper we only examine in detail the regularization of the normal form of a visible  
tangency, we think that the same approach can be used to study the other tangencies.

With the tools provided by singular perturbation theory and asymptotic expansions, following \cite{MischenkoR80}, 
we analyse how the normally hyperbolic invariant manifold deviates in passing around the fold and we determine 
regions close to  the fold exponentially attracted to this variety. 
Then we conclude that the orbits issuing from these regions, after passing near the tangency, are concentrated in an exponentially 
small neighborhood of the extended invariant manifold provided by Fenichel theory. 
Moreover, the deviation of the invariant manifold is leaded by a distinguished solution of a Riccati equation, a typical result in singular perturbed 
systems around the singular points of the slow manifold (\cite{MischenkoR80, Bonet87, KrupaS97}).
One can then conclude that, also close to a visible fold-regular point, the regularized system behaves closely to the Filippov one.

From the work of Dumortier, Krupa, Roussarie, Szmolian, Wechselberger  (\cite{DumortierR96, Szmolyan01, KrupaS01}) and others, 
the blow-up technique is used as a geometrical alternative to asymptotic methods. 
Nevertheless, we have decided to use these last methods because we only 
need to arrive until the lower half region of the fold and the calculations involved are no too difficult.
Furthermore, the careful analysis needed to control the regions exponentially attracted by the invariant manifold  is made comfortably with these methods.

The qualitative results obtained  in this work do not depend of the degree of smoothness of the regularized system but the quantitative ones do.
In the case that the regularized system is $\mathcal{C}^1$, that is, the contact of the regularized field and the two adjacent fields in the 
boundary of the regularization zone is strictly of order one, 
we  proof the  well known result that the deviation of the invariant manifold is $O(\eps ^{\frac{2}{3}})$.
But we think is worth to derive it in the setting of piecewise differentiable systems and also as a basis to
extend it to the $\mathcal{C}^{p-1}$ contact, where we find that the deviation is $O(\eps ^{\frac{p}{2p-1}})$. 

A crucial result in our work is to see that the invariant manifold attracts a region near the
sliding region which contains points up to a distance of order $\eps ^{\gamma}$, $\gamma<\frac{p}{2p-1}$,  to the tangency 
point. 
Moreover, the fact that the regularization only takes place in an $\eps$-neighborhood of the switching manifold, 
remaining unaltered the adjacent fields outside, makes easier to  analyze global properties 
of the system. 
If the field tangent to the switching manifold has any stable recurrence, such as a (sliding or grazing) periodic orbit or a 
sliding homoclinic orbit to a hyperbolic saddle, 
the exponential flattening to the slow manifold of sliding areas  $\eps ^{\gamma}$-near the fold, 
will ensure recurrence also in the regularized systems, and a return  Poincare map can be determined and computed.
 
All this will allow us to study the existence of global periodic orbits in the regularized system in different settings, 
like in one parameter Filippov families of vector fields having a grazing-sliding bifurcation of periodic orbits or a sliding homoclinic bifurcation.
We will also apply our results to some classical examples as the dry friction models.

The paper is organized as follows.

In section 2 we introduce the notation, the basic concepts of a Filippov vector field in the plane and we present the 
Sotomayor-Teixeira regularization. 
To study the dynamics near a fold-regular points we introduce Poincar\'{e} sections and  a Poincar\'{e} map near the fold.  
The main theorem of the paper is Theorem \ref{thm:main}, where we give the main asymptotic properties of this Poincar\'{e} map.
The proof of this theorem, rather cumbersome, is given in section \ref{sec:proof}. 
The main idea is to use the fact that the regularized vector field and the Filippov one are identical everywhere except 
in a region near the switching manifold which is of order $\eps$. 
So the main part of the proof is to study the behavior of the regularized system, which turns to be a slow-fast system, in this region. 
This study is done using geometric singular perturbation theory, which provides  the existence 
of a normally  attracting invariant manifold $\Lambda _\eps$ of the system.
Once we have this invariant  manifold we need to extend it to see two things: 
on the one hand we have  that this manifold exponentially attracts a region which contains points which are at a distance 
of order $\eps ^{\gamma}$, $\gamma<\frac{p}{2p-1}$,  to the origin 
(see propositions \ref{prop:atractiogranlineal}, \ref{prop:atractiogran}, \ref{prop:atractiogranp}). 
On the other hand, we need to give an asymptotic expression of this invariant manifold when it arrives to the border of the regularized region
(see propositions \ref{varietatconfinadalineal}, \ref{blocouter}, \ref{prop:blocinner},  \ref{blocouterp}, \ref{prop:blocinnerp}). 
This last part is done using asymptotic expansions and matching methods to obtain a suitable inner equation.

Although we study in detail the $\mathcal{C}^1$ regularization of the normal form of the visible fold, in sections 
\ref{sec:spc} and \ref{sec:generalfold} 
we show that the techniques used and the results generalize straightforwardly  to $\mathcal{C}^{p-1}$  regularizations and generic folds. 

Besides a greater complication of the computations, the only delicate issue to study the $\mathcal{C}^{p-1}$ case, 
is the determination of the distinguished solution of the equation
$$
 y '= x + y^p
$$
that appears as a dominant term in the asymptotic development near the fold. 
This equation is well known in the case $p=2$ (see \cite{MischenkoR80}) but, as far as the authors know, the general case has not been done before. 
In propositions \ref{prop:asymptoticsp}, \ref{prop:blocinnerp}
we proof that this solution leads, as in the  $\mathcal{C}^1$ case, the deviation of the invariant manifold, 
which turns out to be $O(\eps ^{\frac{p}{2p-1}})$. 

Once we have our main local result in Theorem \ref{thm:main}, in Theorem \ref{thm:po} we analyze the existence of periodic orbits in 
the regularized system assuming that the Filippov vector field has some global recurrence which typically occurs near a grazing sliding bifurcation.
Finally, Theorem \ref{thm:sella-node} studies the possible global bifurcations of periodic orbits in the regularization of  a one parameter Filippov 
vector field undergoing a grazing-sliding bifurcation. As expected, we  see that the grazing-sliding bifurcation of a hyperbolic attracting  periodic 
orbit leads to a structurally stable periodic orbit in the regularized system and the   grazing-sliding bifurcation of a hyperbolic repelling  
periodic orbit creates a saddle-node bifurcation of periodic orbits  in the regularized system.

Also in section \ref{dryfriction} we consider the three basic models of dry friction in single degree of freedom systems, 
following the formulation described in  \cite{Leine00b, Leine00}. 
We see that only the Stribeck model fulfills our hypotheses to directly conclude the existence of 
attracting periodic orbits of the regularized system. 
Nevertheless, in Theorem \ref{thm:centre},  we will see that our methods will be able to ensure the existence of 
periodic orbits also in the Coulomb model, 
in spite of the neutral character of the tangent orbit (it belongs to a centre). 
The exponential concentration of the regularized field to a neighborhood of the Fenichel variety combined with the  
return that provides the centre will guaranty that the unique orbit of the non-smooth system tangent to the border of the 
regularization zone is semi-stable, that is, attracts all 
the regularization strip.

However, this regularization does not apply for the Stiction model as the mechanical analysis in the switching manifold gives an 
equation different from  the Filippov one. 
It is clear that a different  regularization will be needed as the phase portrait of the slip Stiction model equations 
is identical to Coulomb and therefore the regularized system would tend to the Filippov dynamics. 
This case is beyond the scope of this article and will be studied later.

The last results of the paper deal with the existence of periodic orbits (and homoclinic ones) in the regularized system when the Filippov 
system has a sliding homoclinic orbit to a saddle, creating a pseudo-separatrix connection between  a saddle and a fold (\cite{KutznesovRG03}). 
This is a codimension one phenomena and therefore appears generically in some one-parameter families.
Theorem \ref{thm:separatrix} studies the general case, showing the existence in the regularized system of a so-called homoclinic 
bifurcation where the periodic orbit dies in a homoclinic one and then disappears. Theorem  \ref{thm:hseparatrix} studies the corresponding 
bifurcation in the Hamiltonian case where the existence of a homolinic orbit is generic.

We want to conclude by emphasizing that, eventhough  this work studies the generic case of a generic visible 
fold-regular point in a Filippov vector field 
in the plane, we think  that the methods used here can be useful to study local bifurcations as fold-fold points and also higher dimensional 
Filipov systems.
We also expect to extend these results to the case where the regularized vector field is analytic. The main novelty of this case 
will be that the regularized vector field and the Filippov one and different in the whole phase space but this is just a technical 
problem that will not change the final results.

\section{Hypotheses and main results}\label{sec:hypotheses}

The main goal of this section is to introduce the regularization of a  Filippov vector field in the plane near a visible fold-regular point
and give the main results of the paper.
Therefore, we consider a non-smooth system in $\RR^2$:
\begin{equation}\label{def:Filippov}
Z(x,y)=\left\{\begin{array}{l}
        \X(x,y),\, (x,y)\in\VV^+\\
        \Y(x,y),\, (x,y)\in\VV^-,
       \end{array}\right.
\end{equation}
where: $\VV^+=\{(x,y), y> 0\}$, 
$\VV^-=\{(x,y), y< 0\}$ with a  switching manifold  given by:
$$
\Sigma=\{(x,y), y= 0\}.
$$ 
We assume that the vector fields $\X$ and $\Y$ have an extension to $\Sigma$ which is, at least  
$\CC^{2}$, and we denote their flows by $\phi_{\X}$ and  $\phi_{\Y}$ respectively.

Without loss of generality we can assume that the fold point is at $(0,0)$.
We assume that the vector field $\Y$ is transversal to $\Sigma$ and that $\X$ has a generic fold in $\Sigma$, that is:
\begin{equation}\label{generalform}
\begin{array}{rcl}
\X(0,0) &=& (\X_1(0,0),0), \quad \X_1(0,0) \ne 0 ,\quad \frac{\partial \X_2}{\partial x}(0,0)\ne 0\\
\Y(0,0)&=&(\Y_1(0,0),\Y_2(0,0)), \quad  \Y_2(0,0)\ne 0 .
\end{array}
\end{equation}
We will consider the case where:
\begin{equation}\label{cond:visiblefold}
\Y_2(0,0)>0, \mbox{and} \ \X_2(x,0) <0 \ \mbox{for}\ x<0, \   \X_2(x,0) >0\ \mbox{for}\ x>0.
\end{equation} 
These conditions ensure that $(0,0)$ is a generic visible fold-regular point. 
As $\X _1(0,0)\ne 0$, we will deal with the case
\begin{equation}\label{cond:visiblefold1}
\X_1(0,0)>0,
\end{equation} 
which implies that $\X$ goes ``to the right".  Analogous results are true for $\X_1(0,0)<0$.

The fold point divides, locally, the switching manifold $\Sigma$ in  two regions: 
\begin{equation}\label{def:sliding}
\begin{array}{rcl}
\Sigma^s&=&\{(x,0), x <0\}\ \mbox{the sliding region} \\
\Sigma^c&=&\{(x,y), x > 0\}\ \mbox{the crossing region}
\end{array}
\end{equation}
Also, following \cite{GuardiaST11}, we define 
\begin{equation}
W^s_+(0,0)=\{ \phi_{\X}(t;0,0), \ t < 0 \}, \quad W^u_+(0,0)=\{ \phi_{\X}(t;0,0), \ t > 0\}
\end{equation}
the stable and unstable pseudo-separatrices  in $\VV^+$ of the fold point $(0,0)$. 
Under our hypotheses, the fold point also has a  stable pseudoseparatrix
in $\VV^-$, but it does not play any role in our setting.

As usual in non-smooth vector fields, we consider the flow of a point  $p\not \in \Sigma$ as given by the flows of the vector fields $\X$ or $\Y$, respectively, depending if $p\in \VV^\pm$. 
If the point $p$ belongs to the switching manifold $\Sigma$ in the crossing region  $\Sigma ^c$ we concatenate both flows in a consistent way.
Moreover, with the Filippov convention \cite{Filippov88}, we can define a sliding vector field in the sliding region $\Sigma ^s$, that, in our case, reads:
$$
\dot x= \frac{\X_1 \Y_2-\Y_1\X_2}{\Y_2-\X_2}(x,0), \ x<0 .
$$
This allows us to define a flow in the whole  neighborhood of $(0,0)$
 (see \cite{GuardiaST11}).

Moreover, under  conditions \eqref{generalform}, \eqref{cond:visiblefold} and \eqref{cond:visiblefold1}, we also have, for $x<0$, small enough:
\begin{equation}\label{positivefilipov}
\X_1\Y_2-\Y_1\X_2 >0
\end{equation}
which gives that the Filipov vector field also moves ``to the right".

To study the behavior near the fold, we consider  any value $y_0>0$ and the Poincar\'{e} sections
$$
\Sigma ^-_{y_0} =\{ (x, y_0), \ x<0\}, \quad \Sigma ^+ _{y_0}=\{ (x, y_0), \ x>0\}.
$$

We denote by 
$$
 (x_0^\pm, y_0)=W^{u,s}_+(0,0)\cap \Sigma ^\pm_{y_0} 
$$
and we assume that $y_0$ is small enough in such a way that these intersections are transversal.
\begin{figure}
\begin{center}
\includegraphics[width=10cm]{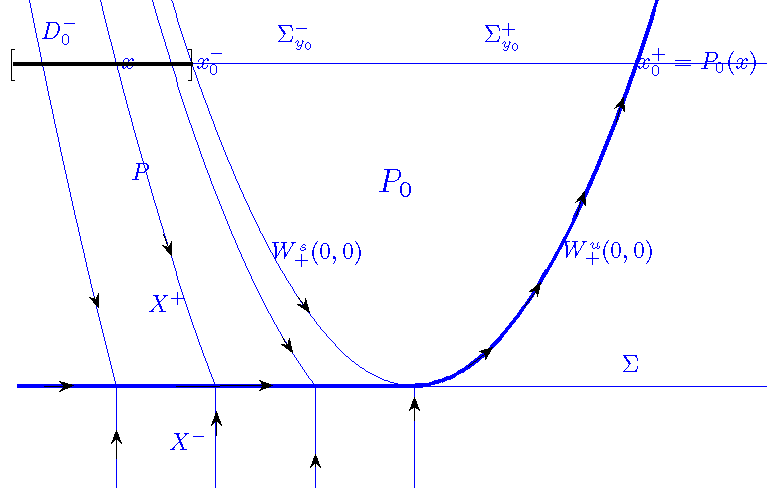}
\end{center}
\caption{The Poincar\'{e} map for the Filippov system.}
\label{fig:P0final}
\end{figure}

We consider the Poincar\'e map:
\begin{equation}\label{Poincare}
\begin{array}{rcl}
P_0:D_0\times \{y_0\}\subset \Sigma  ^-_{y_0}&\to& \Sigma ^+_{y_0}\\
(x,y_0) &\mapsto &(P_0(x), y_0).
\end{array}
\end{equation}
For the Filipov system \eqref{def:Filippov}, all the trajectories of the system beginning 
at $(x,y_0) \in D_0\times \{y_0\}$ with $ x \le x_0^-$
arrive to the sliding region $
\Sigma ^{s}$ (see 
\eqref{def:sliding}),
then slide until they leave the switching manifold $\Sigma$ at the fold $(0,0)$ following its unstable pseudoseparatrix $W^u_+(0,0)$ 
(see figure \ref{fig:P0final}). 
Therefore the map $P_0$ is constant in $D_0^-$:
$$
P_0(x) = x_0^+, \ \forall x \in D_0^-=\{x\in D_0, \quad x\le x_0^-\} .
$$

\subsection{The regularized system near the fold}\label{sec:trsnf}

As the non-smooth system $Z$ in \eqref{def:Filippov} can be written  as:
$$
Z (x,y)= \frac{\X(x,y)+\Y(x,y)}{2}+\Xi(y) \frac{\X(x,y)-\Y(x,y)}{2},
$$
where the function $\Xi$ is the discontinuous function:
$\Xi : \RR \to \RR $, defined by:
$$
\Xi(z)=\left\{\begin{array}{ccc} -1 &\mbox{if} & z<0 \\
1 &\mbox{if} & z>0 \end{array}\right . ,
$$
a classical way to regularize the vector field $Z$ \cite{SotomayorT95} is to consider vector fields $Z_\eps$:
\begin{equation}\label{regularizedvf}
Z_\eps (x,y)= \frac{\X(x,y)+\Y(x,y)}{2}+\varphi(\frac{y}{\eps}) \frac{\X(x,y)-\Y(x,y)}{2},
\end{equation}
where we can  take any increasing smooth function $\varphi$ which approximates the discontinuous function $\Xi$ and verifies:
$$
\varphi(v) =-1, \ \mbox{for} \ v\le -1, \quad \varphi(v) =1, \ \mbox{for} \ v\ge 1.
$$
Let us point out that, with these smooth regularizations, outside the regularized zone $|y|\le \eps$, the regularized vector field
$Z_\eps$ coincides with the non-smooth one $Z$. This would not be the case if we chose an analytic function $\varphi$ in \eqref{regularizedvf}. In that case $Z_\eps$ and $Z$ would be different everywhere and this will  be the study of a future work.
 
In Theorem \ref{thm:main} we will give and asymptotic expansion, for $\eps$ small enough,  of the Poincar\'{e} map 
$$
P_\eps: D_\eps\times \{y_0\}\subset \Sigma ^-_{y_0} \to \Sigma ^+_{y_0},
$$ 
which is the Poincar\'{e} map for the regularized system $Z_\eps$.

We denote $(x_\eps, \eps)$ to  the point where the vector field $\X$ has a tangency with the horizontal line $y=\eps$, that is 
\begin{equation}\label{eq:tangent}
\X_2(x_\eps,\eps)=0
\end{equation}
and by 
$(\bar x_\eps, y_0)$ the intersection of its orbit by $\X$ with $\Sigma _{y_0}^-$, that is
\begin{equation}\label{eq:tangentbar}
(\bar x_\eps,y_0)= \phi _{\X}(t; x_\eps, \eps) \in \Sigma_{y_0}^-
\end{equation}
for some suitable $t<0$  (see figure \ref{fig:Pepsfinal}). Clearly, by \eqref{generalform}, $x_\eps  = O(\eps)$.

It is clear that, for $x\in D_\eps$ such that $x>\bar x_\eps$, one has $P_\eps (x)=P_0(x)$.
Therefore, we will restrict our study of the Poincar\'{e} map $P_\eps$ to the interval
$[k_\eps,\bar x_\eps]\subset D_\eps$, 
where $k_\eps <x_0^-$ is a suitable constant which depends of the global properties of $Z_\eps$.

In $[k_\eps,\bar x_\eps]$, it will be convenient to write  the map $P_\eps= \bar P\circ \PP_\eps \circ  P$ (see figure \ref{fig:Pepsfinal}), where 
\begin{eqnarray*}
P : \Sigma_{y_0} ^-& \to &\Sigma _{\eps}^-\\
\PP_\eps : \Sigma_\eps ^-& \to &\Sigma _{\eps}^+\\
\bar P : \Sigma_\eps ^+& \to &\Sigma _{y_0}^+ .
\end{eqnarray*}

\begin{figure}
\begin{center}
\includegraphics[width=10cm]{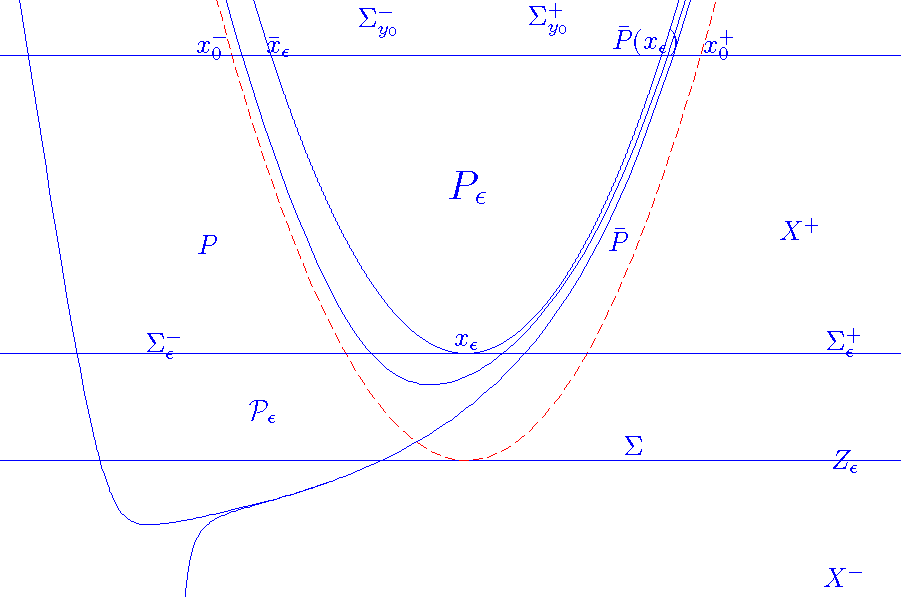}
\end{center}
\caption{The Poincar\'{e} map for the regularized system $Z_\eps$.}
\label{fig:Pepsfinal}
\end{figure}

The map $\PP_\eps$ is defined in the region where the 
regularized system $Z_\eps$ and the original Filipov one $Z$ are different. 
Its study will be one of the main goals of the paper and will be done using Geometric Singular Perturbation Theory in section \ref{sec:proof}.

Clearly $P$ and $\bar P$ are the same for $Z$ and the regularized system $Z_\eps$.
In  fact, they are Poincar\'{e} maps associated to the vector field $\X$.
Their asymptotic expressions for $\eps$ small enough are an easy consequence of next proposition.

\begin{proposition}\label{prop:flowtangency}
Consider the  pseudoseparatrices of the fold $W^{u,s}_+(0,0)$,  and the points  $(x_0^\pm,y_0)=W^{u,s}_+(0,0)\cap \Sigma ^\pm_{y_0}$ 
and assume that these intersections are tranversal, that is $\X_2(x_0^\pm,y_0)\ne 0$.
Denote by  $T^\pm$ the time such that
$\phi _{\X} (T^\pm;0,0) \in \Sigma ^\pm_{y_0}$, where $\phi_{\X}(t;x,y)$ is the flow of the (regular) vector field $	\X$. Consequently
$\phi _{\X}(T^\pm;0,0)=(x^\pm_0, y_0)$.

Then, there exists a neighborhood $U$ of the origin such that, for any 
$(x,y)\in U$, there exist regular functions 
\begin{eqnarray*}
\tau ^\pm : U &\to &\RR \\
(x,y) &\mapsto &\tau ^\pm(x,y)
\end{eqnarray*}
such that, $\phi_{\X}(\tau^\pm (x,y);x,y)\in \Sigma ^\pm_{y_0}$.
Moreover:
\begin{itemize}
\item
$\tau ^\pm(0,0)=T^\pm$
\item
If $(x,y) \in U$, one has
$$
\phi_{\X}(\tau^\pm (x,y); x,y)= \left(x^\pm _0+ \alpha ^\pm y+ \beta ^\pm x^2+ O(x y,y^2), \, y_0\right)
$$
with $\alpha ^+ <0$, $\beta ^+ >0$, $\alpha ^->0$, $\beta ^- <0$.
\end{itemize}
\end{proposition}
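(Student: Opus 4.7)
The plan is first to obtain $\tau^\pm$ by applying the implicit function theorem to $F^\pm(t,x,y) := \phi_{\X,2}(t;x,y) - y_0$. At $(T^\pm,0,0)$ one has $F^\pm = 0$ and $\partial_t F^\pm = \X_2(x_0^\pm, y_0) \ne 0$ by the transversality hypothesis, so the IFT provides a smooth $\tau^\pm$ on a neighborhood $U$ of the origin with $\tau^\pm(0,0) = T^\pm$ and $\phi_{\X,2}(\tau^\pm(x,y);x,y) \equiv y_0$.

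I would then set $h^\pm(x,y) := \phi_{\X,1}(\tau^\pm(x,y);x,y)$, so $\phi_{\X}(\tau^\pm;x,y) = (h^\pm(x,y), y_0)$. Since any two points on the same orbit of $\X$ land at the same point of $\Sigma^\pm_{y_0}$, the function $h^\pm$ must be a first integral of $\X$ on $U$:
$$
\X_1\,\partial_x h^\pm + \X_2\,\partial_y h^\pm = 0.
$$
Evaluating at $(0,0)$, together with $\X_2(0,0)=0$ and $\X_1(0,0)>0$, immediately yields $\partial_x h^\pm(0,0) = 0$, killing the linear-in-$x$ contribution in the Taylor expansion and giving $h^\pm(x,y) = x_0^\pm + \alpha^\pm y + \beta^\pm x^2 + O(xy,y^2)$ with $\alpha^\pm = \partial_y h^\pm(0,0)$ and $\beta^\pm = \tfrac12\,\partial^2_{xx} h^\pm(0,0)$. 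Differentiating the first-integral identity once more in $x$ and evaluating at $(0,0)$, using $\partial_x h^\pm(0,0)=0$ and $\X_2(0,0)=0$, I would obtain
$$
\beta^\pm = -\frac{\partial_x \X_2(0,0)}{2\,\X_1(0,0)}\,\alpha^\pm.
$$
Since $\X_1(0,0)>0$ and $\partial_x \X_2(0,0)>0$ by \eqref{generalform}, \eqref{cond:visiblefold} and \eqref{cond:visiblefold1}, $\alpha^\pm$ and $\beta^\pm$ must have opposite signs.

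The remaining and most delicate step is to fix the sign of $\alpha^\pm$. For this I would exploit that $h^\pm$ restricts to the identity on $\Sigma^\pm_{y_0}$, giving $\partial_x h^\pm(x_0^\pm,y_0) = 1$; the first-integral PDE then forces
$$
\nabla h^\pm(x_0^\pm, y_0) = \frac{1}{\X_2(x_0^\pm,y_0)}\,\bigl(\X_2(x_0^\pm,y_0),\,-\X_1(x_0^\pm,y_0)\bigr),
$$
which is a scalar multiple of the right-hand normal $(\X_2,-\X_1)$ to $\X$, with scalar of the same sign as $\X_2(x_0^\pm,y_0)$. But $\X_2(x_0^+,y_0)>0$, since the separatrix $\phi_{\X}(t;0,0)$ reaches $(x_0^+,y_0)$ as a first transverse crossing of $\Sigma^+_{y_0}$ from below as $t\uparrow T^+$; and $\X_2(x_0^-,y_0)<0$, since flowing forward from $(x_0^-,y_0)$ drives $y$ from $y_0$ down to $0$ at $(0,0)$. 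Along the separatrix orbit $\{\phi_{\X}(t;0,0):T^-\le t\le T^+\}$, the first integral $h^\pm$ is a submersion, so $\nabla h^\pm$ never vanishes; being perpendicular to the nonvanishing field $\X$, its sign relative to the continuous right-hand normal is preserved along the orbit. Transporting to $(0,0)$, where $\X(0,0)=(\X_1(0,0),0)$ and the right-hand normal points in the $-y$ direction, then produces $\alpha^+<0$ and $\alpha^->0$; combined with the preceding relation, this yields $\beta^+>0$ and $\beta^-<0$.

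The hard part will be this last orientation step. The existence and smoothness of $\tau^\pm$ and the relation between $\alpha^\pm$ and $\beta^\pm$ are essentially formal consequences of the IFT and the first-integral property of $h^\pm$, but pinning down the actual signs of $\alpha^\pm$ requires transporting the orientation of $\nabla h^\pm$ with respect to $\X$ along the whole separatrix, using that $\X$ is nonzero on a tubular neighborhood and that $h^\pm$ is a submersion there.
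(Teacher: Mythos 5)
Your proposal is correct, but it reaches the conclusions by a different mechanism than the paper. The first step is the same: both you and the authors get $\tau^\pm$ from the implicit function theorem applied to the $y$-component of the flow at $(T^\pm,0,0)$, using the transversality hypothesis. After that the paper proceeds computationally: it expands $\phi_{\X}(t;x,y)$ via the variational equations, writes $D\phi_{\X}(t;0,0)$ in terms of the solution $z_1(t)=\phi'_{\X}(t;0,0)/\X_1(0,0)$ and an independent solution $z_2(t)$, inserts the IFT formula for $D\tau^\pm(0,0)$, and observes that the terms linear in $x$ cancel, which yields the stated expansion; the signs of $\alpha^\pm,\beta^\pm$ are then dispatched in one sentence by appealing to the fact that planar orbits cannot intersect. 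You instead exploit the structural fact that the hitting map $h^\pm(x,y)=\pi_x\phi_{\X}(\tau^\pm(x,y);x,y)$ is a first integral of $\X$: the identity $\X_1\partial_x h^\pm+\X_2\partial_y h^\pm=0$ at the fold kills the linear $x$-term without any variational computation, one further $x$-derivative gives the explicit relation $\beta^\pm=-\frac{\partial_x\X_2(0,0)}{2\X_1(0,0)}\alpha^\pm$ (which the paper never states, and which is consistent with the normal-form values $\alpha^\pm=\mp\frac{1}{2\sqrt{y_0}}$, $\beta^\pm=\pm\frac{1}{2\sqrt{y_0}}$), and the signs of $\alpha^\pm$ follow from transporting the orientation of $\nabla h^\pm$ relative to the normal $(\X_2,-\X_1)$ along the separatrix, anchored by $\partial_x h^\pm(x_0^\pm,y_0)=1$ and the signs of $\X_2(x_0^\pm,y_0)$. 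Your route buys a sharper quantitative output and a more explicit sign argument than the paper's appeal to non-crossing of orbits; its price is the step you yourself flag: $h^\pm$ must be defined as a first integral not just on the small $U$ produced by the IFT at the origin but on a tubular neighborhood of the whole compact separatrix arc, and $\nabla h^\pm$ must be shown nonvanishing along it. Both points are standard (flow to the section is well defined on a thin tube by transversality and compactness, and $\nabla h^\pm(\phi_{\X}(s;p))=\nabla h^\pm(p)\,[D_p\phi_{\X}(s;p)]^{-1}$ cannot vanish once it is nonzero at the section), so the gap is one of routine detail rather than of substance.
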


\begin{proof}
Let's consider the flow of $\X$, $\phi  _{\X}(t;x,y)$.

The existence of  the functions $\tau ^\pm(x,y)$ is a consequence of the implicit function theorem applied to the equation
$m(t,x,y)=0$, where $m(t,x,y)=\pi _y (\phi _{\X} (t;x,y))-y_0$ near $(T^+,0,0)$ and $(T^-,0,0)$ respectively.

On one hand we have that $m(T^\pm,0,0)=0$ and the transversality of the intersections of $W^u_+(0,0)\cap \Sigma ^+_{y_0}$ and 
$W^s_+(0,0)\cap \Sigma ^-_{y_0}$ gives
$\frac{\partial m}{\partial t}(T^\pm,0,0)= \X_2(x_0^\pm, y_0)\ne 0$.

We compute $\phi_{\X}(t;x,y) $ developing by  Taylor at $(x,y)=(0,0)$: 
\begin{equation}\label{variacional}
\phi_{\X}(t;x,y)= \phi_{\X}(t;0,0) + D\phi_{\X}(t;0,0)\left( \begin{array}{c}x\\y\end{array}\right) + O_2(x,y).
\end{equation}
We observe that 
$D\phi_{\X}(t;0,0)$ is the fundamental matrix of the variational equations:
$$
z'=D\X(\phi_{\X}(t;0,0))z, \quad \mbox{satisfying}\quad D\phi_{\X}(0;0,0)= \mathrm{Id}.
$$ 

We know that $ \phi_{\X }'(t;0,0)$ is a solution of the variational equations and that, by hypotheses \eqref{generalform},  $\phi'_{\X}(0;0,0) = (\X_1(0,0),0)$, therefore, one can take 
$z_1(t) = \frac{1}{\X_1(0,0)}\phi' _{\X}(t;0,0)$
and look for an independent solution $z_2(t)$ of the variational equation in such a way that:
$D\phi_{\X}(t;0,0)= \left(\begin{array}{cc}  z_1(t)& z_2(t)\end{array}\right)$.

By the implicit function theorem we know that:
$$
D\tau ^{\pm}(0,0) = -\frac{1}{\partial _ t m(T^\pm,0,0)} D m(T^\pm,0,0)= 
-\frac{1}{y_0'(T^\pm)}\left(\frac{y_0'(T^\pm)}{\X_1(0,0)}, \pi _y (z_2(T^\pm))\right)
$$
where we have denoted by $(x_0(t),y_0(t))= \phi_{\X}(t;0,0)$.

Now, using \eqref{variacional},  we compute:
$$
\pi_x (\phi_{\X}(\tau ^{\pm},x,y))= x_0(\tau ^\pm)+ \frac{1}{\X_1(0,0)}x_0'(\tau^\pm)x + \pi _x (z_2 (\tau ^\pm))y + O_2(x,y).
$$
Using the Taylor expansion of $\tau^\pm$ and also expanding the above expression for $x_0(t)$ we obtain:
\begin{eqnarray*}
\pi_x (\phi_{\X}(\tau ^{\pm},x,y))&=& x_0(T^\pm)-x_0'(T^\pm)\frac{1}{y_0'(T^\pm)}\left(\frac{y_0'(T^\pm)}{\X_1(0,0)}x+\pi _ y(z_2(T^\pm))y\right)\\
&+&\frac{1}{\X_1(0,0)} x_0'(T^\pm)x + \pi _x (z_2 (T^\pm))y + O_2(x,y)\\
&=&x_0^\pm + \alpha ^\pm y + O_2 (x,y) =
x_0^\pm + \alpha ^\pm y + \beta^\pm x^2+O (xy,y^2) .
\end{eqnarray*}
The signs of the constants $\alpha ^\pm$ and $\beta^\pm$ are a consequence of the fact that the orbits of a vector field on the plane can not intersect.
\end{proof}

From this proposition, it is clear that, if $(x,\eps) \in U$:
\begin{equation}\label{eq:ppg}
P^{-1}(x)= x^-_0 +\alpha ^- \eps+ \beta^- x ^2 +O(\eps x,\eps^2), \quad  
\bar P(x )= x^+_0 + \alpha ^+ \eps +\beta ^+ x^2+ O( \eps x, \eps^2).
\end{equation}

Observe that, the domain of $\bar P$ is $U^+=[x_\eps, k^+]$ where the point $(x_\eps, \eps)$ 
corresponds to the point  \eqref{eq:tangent} where the vector field $\X$ has a tangency with the horizontal line $y=\eps$,
and $k^+$ is a suitable constant independent of $\eps$.  
Analogously, the domain of $P$ is $U^-=[K^-, \bar x_\eps]$, were the point $\bar x_\eps =P^{-1}(x_\eps)$
was defined in \eqref{eq:tangentbar}.
 
As $x_\eps  = O(\eps)$,  using the formulas given in \eqref{eq:ppg}:
\begin{equation}\label{xepsilon}
\begin{array}{rcl}
\bar P(x_\eps)&=& x^+_0+ \alpha ^+ \eps +O(\eps ^2)\\
\bar x_\eps=P^{-1}(x_\eps)&=& x^- _0+\alpha ^- \eps +O(\eps ^2). 
\end{array}
\end{equation}
Summarizing,  one has that 
\begin{eqnarray*}
\bar P: [x_\eps, k_+] &\to &[\bar P(x_\eps), K^+]\\
P:[K^-,\bar x_ \eps] &\to& [k_-, x_\eps ].
\end{eqnarray*}

Section \ref{sec:proof} is devoted to study the Poincar\'{e} map $\PP_\eps$ after the regularization.
Combining the behavior of $\PP_\eps$ with the maps $P$ and $\bar P$ we will obtain the asymptotics for $P_\eps$.
We will consider different functions $\varphi$ with different regularity and we will study how 
the properties of the regularized system depend on this regularity. Moreover, using geometric singular 
perturbation theory and matching asymptotic expansions, we will give asymptotic formulas for the Poincar\'{e} map $P_\eps$. 

There are two significantly different cases:
\begin{itemize}
\item
$\varphi$ is a continuous piecewise linear function:
\begin{equation}\label{caslineal}
\varphi(v)=
\left\{\begin{array}{ccc} -1 &\mbox{if} & v\le-1 \\
v &\mbox{if} & -1<v< 1 \\
1 &\mbox{if} & v\ge 1 .
\end{array}\right.
\end{equation}
\item
$\varphi$ is a $\CC ^{p-1} $ function, $p\ge 2$,  such that:
\begin{equation}\label{diferenciable}
\varphi(v)=\left\{ \begin{array}{ccc} -1 &\mbox{if} & v\le -1 \\
1 &\mbox{if} & v\ge 1,
\end{array} \right. 
\end{equation}
and is $\CC^{\infty}$ for $-1<v<1$.
Therefore, locally, near $v=1$, and for $v\le 1$, it will behave as 
\begin{equation}\label{eq:fipropdeu}
\varphi(v) \simeq 1+ O(v-1)^p .
\end{equation}
\end{itemize}

Next theorem gives the asymptotic behavior of the Poincar\'{e} map $P_\eps $ in terms of the regularity of $\varphi$ 
(see also figure \ref{fig:teoremageneral}):

\begin{theorem}\label{thm:main}
Take  $y_0 >0$ small enough. Fix $p\ge 1$, $p\in \NN$, and consider the regularized vector field $Z_\eps$ in \eqref{regularizedvf} with $\varphi$ a $\CC^{p-1}$ function as in \eqref{caslineal} or \eqref{diferenciable}. Fix $0<\lambda<\frac{p}{2p-1}$.

There exist $\eps _0 >0$, $L^-<0$, and $ \alpha (\eps)= x_0^-+\alpha ^- \eps + \beta ^- \eps ^{2\lambda}+ O(\eps^{\lambda +1})$, where $\alpha ^-$, $\beta^-$ are the constants given in Proposition \ref{prop:flowtangency},
such that the map $P_\eps$ restricted to  the interval  $\II:=[L^-, \alpha (\eps)] $ verifies:
\begin{itemize}
\item
If $\varphi$ is a piecewise linear function ($p=1$):
$$
P_\eps (x)= x_0^+ + \alpha^+ \eps  +O(\eps ^2), \ \forall x\in \II
$$
\item
If $\varphi$ is of class $\CC^{p-1}$ ($p\ge 2$):
$$
P_\eps (x)
=x_0^+ +\alpha ^+ \eps +\beta^+ (\eta (0))^2\eps ^{\frac{2p}{2p-1}} 
+ \OO(\eps^{\frac{3p-1}{2p-1}}, \eps^{\frac{p(p+1)}{(2p-1)^2}}), \ \forall x\in \II,
$$
where $\eta (u)$ is the unique  solution of equation: 
\begin{eqnarray}\label{eq:edoeta0pteorema}
\frac{d \eta}{d u}= \frac{2}{4\eta  -\frac{\varphi^{(p)}(1)}{p!}u^p}
\end{eqnarray}
satisfying $\eta  (u)-\frac{\varphi^{(p)}(1)}{4 p!}u^p \to 0$ as $u\to -\infty$. Here we denote as
$$
\varphi^{(p)}(1):= \lim _{v\to 1^-}\varphi^{(p)}(v).
$$
\end{itemize}
\end{theorem}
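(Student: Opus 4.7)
The plan is to exploit the factorization $P_\eps = \bar P \circ \PP_\eps \circ P$ already introduced in Section \ref{sec:trsnf}. Since $\bar P$ and $P$ are Poincar\'e maps of the smooth field $\X$ between the sections $\Sigma_{y_0}^\pm$ and $\Sigma_\eps^\pm$, their expansions are given by \eqref{eq:ppg} and are completely independent of the regularization. All the new work is therefore concentrated in the middle map $\PP_\eps$, which is defined by the flow of $Z_\eps$ inside the strip $|y|\le\eps$, the only region where $Z_\eps$ differs from $Z$.

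To analyze $\PP_\eps$ I rescale $y=\eps v$; in the coordinates $(x,v)$ the regularized system becomes a slow--fast system whose critical manifold $C_0$ is defined implicitly by $\X_2(x,0)+\Y_2(x,0)+\varphi(v)\bigl(\X_2(x,0)-\Y_2(x,0)\bigr)=0$. Under the hypotheses \eqref{cond:visiblefold} this manifold is normally hyperbolic and attracting for $x<0$ bounded away from the Filippov fold, and it loses normal hyperbolicity at a point on the upper boundary $v=1$ whose $x$-coordinate lies at distance $O(\eps)$ from the tangency point $x_\eps$. Geometric singular perturbation theory (Fenichel) then provides a smooth attracting invariant manifold $\Lambda_\eps$ of $Z_\eps$ that is $O(\eps)$-close to $C_0$ over the sliding region and that funnels all orbits coming from $\Sigma_\eps^-$ towards the fold.

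The delicate part is to extend $\Lambda_\eps$ across the fold and to read off the exit point on $\Sigma_\eps^+$. I split this into two substeps. First, using the exponential contraction along fast fibers, I would show that the orbits starting at $(x,1)\in\Sigma_\eps^-$ with $x$ in the image $P(\II)$, including those which begin only at distance $\eps^\lambda$ from $\bar x_\eps$ with $\lambda<p/(2p-1)$, are already exponentially close to $\Lambda_\eps$ when they approach the fold region; this is the substance of Propositions \ref{prop:atractiogranlineal}, \ref{prop:atractiogran}, \ref{prop:atractiogranp}. Second, I would blow up the fold by the rescaling of $(x-x_\eps,v-1)$ whose powers of $\eps$ balance the $Z_\eps$-flow into the equation $\eta' = 2/\bigl(4\eta-\frac{\varphi^{(p)}(1)}{p!}u^p\bigr)$, and invoke the distinguished solution $\eta(u)$ of \eqref{eq:edoeta0pteorema}, matched to the outer invariant manifold as $u\to-\infty$, to conclude that the extension of $\Lambda_\eps$ hits $\Sigma_\eps^+$ at a point with $x$-coordinate $x_\eps+\eta(0)\eps^{p/(2p-1)}+\text{l.o.t.}$; this is the content of Propositions \ref{varietatconfinadalineal}, \ref{blocouter}, \ref{prop:blocinner}, \ref{blocouterp} and \ref{prop:blocinnerp}.

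I expect the main obstacle to be the inner analysis for $p\ge 3$: for $p=2$ the governing equation is a Riccati equation and its distinguished solution is classical (Mishchenko--Rozov), but for general $p$ one has to establish the existence, uniqueness and regularity up to $u=0$ of the solution satisfying $\eta(u)-\frac{\varphi^{(p)}(1)}{4p!}u^p\to 0$ as $u\to-\infty$, and to match it against the outer Fenichel expansion sharply enough to produce the error $\OO\bigl(\eps^{(3p-1)/(2p-1)},\eps^{p(p+1)/(2p-1)^2}\bigr)$. Once this is in hand, composing with $\bar P$ through \eqref{eq:ppg} yields $\bar P\bigl(x_\eps+\eta(0)\eps^{p/(2p-1)}+\cdots\bigr)=x_0^++\alpha^+\eps+\beta^+\eta(0)^2\eps^{2p/(2p-1)}+\cdots$, which is precisely the formula stated in Theorem \ref{thm:main}. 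In the piecewise linear case $p=1$ the inner correction is already $O(\eps)$ and is absorbed into the $\alpha^+\eps$ term, leaving only the $O(\eps^2)$ remainder.
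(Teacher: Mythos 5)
Your outline reproduces the paper's own route essentially step by step: the factorization $P_\eps=\bar P\circ\PP_\eps\circ P$ with $P,\bar P$ controlled by Proposition \ref{prop:flowtangency} and \eqref{eq:ppg}, the rescaling $y=\eps v$ turning the strip into a slow--fast system, Fenichel's theorem for the attracting manifold $\Lambda_\eps$ over the sliding region, the extension past the loss of normal hyperbolicity by isolating blocks in an outer regime ($v\le 1-\eps^{\lambda_1}$) and an inner regime obtained from the scaling $x=\OO(\eps^{p/(2p-1)})$, $v-1=\OO(\eps^{1/(2p-1)})$, the matching with the distinguished solution of \eqref{eq:edoeta0pteorema}, the Gronwall-type exponential attraction of the whole segment at distance $\eps^{\lambda}$, $\lambda<\frac{p}{2p-1}$, and finally the composition with $\bar P$ that produces the coefficient $\beta^+(\eta(0))^2\eps^{2p/(2p-1)}$ (with the $p=1$ case degenerating to an $\OO(\eps^2)$ remainder). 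So there is no methodological divergence to report.

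There is, however, one genuine gap: the existence, uniqueness and continuation up to $u=0$ of the solution $\eta$ with $\eta(u)-\frac{\varphi^{(p)}(1)}{4\,p!}u^p\to 0$ as $u\to-\infty$ is asserted in the statement of the theorem itself, and for $p\ge 3$ it is not classical; you only flag it as the ``expected main obstacle'' without offering an argument, so your proposal does not actually prove the theorem in the general $\CC^{p-1}$ case. The paper supplies this in Proposition \ref{prop:asymptoticsp}: after normalizing to $\frac{d\bar\eta}{d\bar u}=\frac{1}{\bar\eta+\bar u^p}$, one sets $w=\bar\eta+\bar u^p$, $\sigma=1/\bar u$ and desingularizes time, which converts the asymptotic condition at $\bar u=-\infty$ into a partially hyperbolic equilibrium at the origin of \eqref{central}; the Centre Manifold Theorem gives a local invariant curve $w=g(\sigma)=-\frac{1}{p}\sigma^{p-1}+\OO(\sigma^{3p-2})$, uniqueness follows because the centre manifold is overflowing for $\sigma<0$, and the isolating block $-\bar u^p<\bar\eta_0<-\bar u^p-K\bar u^{1-p}$ (with $K>1/p$) carries this distinguished solution all the way to $\bar u=0$, which is exactly what the inner matching and the value $\eta(0)$ require. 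Without this (or an equivalent construction), together with the quantitative error bookkeeping of the blocks ${\bf B^p}$ and ${\bf B_2^p}$ that yields the exponents $\frac{3p-1}{2p-1}$ and $\frac{p(p+1)}{(2p-1)^2}$, the stated asymptotics for $P_\eps$ cannot be concluded.
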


\begin{figure}
\begin{center}
\includegraphics[width=10cm]{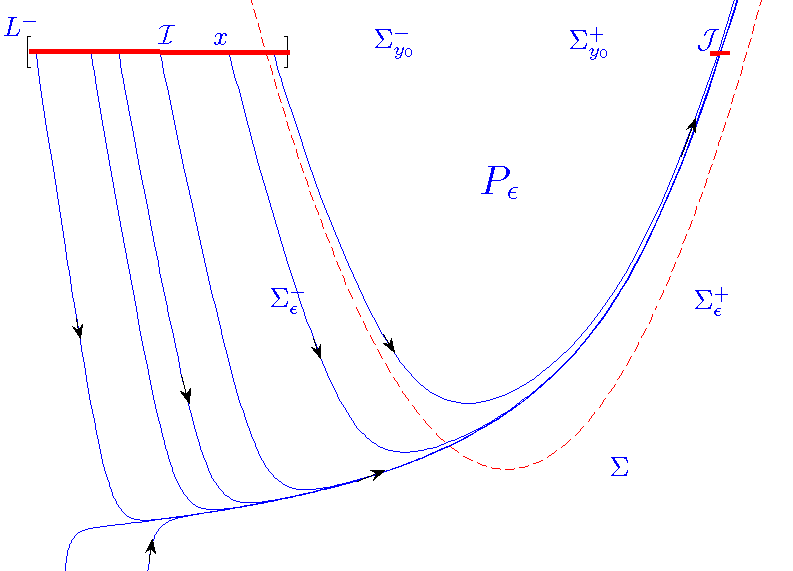}
\end{center}
\caption{Dynamics of the  Poincar\'{e} map $P_\eps$ for the regularized  system $Z_\eps$.}
\label{fig:teoremageneral}
\end{figure}

\subsection{Global results: existence of periodic orbits}
Now suppose that the upper vector field $\X$ has a global recurrence in such a way 
that there exists a {\emph{exterior}} Poincar\'{e} map:
\begin{equation}
\begin{array}{rcl}
P^e : \Sigma ^+_{y_0} &\to& \Sigma ^-_{y_0}\\
(x,y_0)&\mapsto &(P^e (x),y_0)
\end{array}
\end{equation}
which is smooth, 
and denote by:
\begin{equation}\label{eq:externalmap}
P^e(x_0^+) = x_0^-+\gamma, \quad
\frac{d P^e}{dx}(x^+_0) =c\le 0,
\end{equation}
where we remind that $x_0^\pm = W^{u,s}_+(0,0)\cap \Sigma _{y_0}^\pm$.
We compose this external map with the Poincar\'{e} map $P_\eps = \bar P\circ \PP_\eps \circ \bar P$ studied in Theorem \ref{thm:main}.

Next theorem gives conditions to ensure the existence of fixed points of the return Poincar\'{e} map $P^e\circ P_\eps$, which give rise to periodic orbits for the regularized system $Z_\eps$.

\begin{theorem}\label{thm:po}
Consider the map $P^e\circ P_\eps$ restricted to the interval $\II$ given in Theorem \ref{thm:main}. 
Let $c$ and $\gamma$ the constants given in \eqref{eq:externalmap}, and let us call $\Delta = \alpha _- - c \alpha _+$, where  $\alpha ^\pm$ are the constants given in Proposition \ref{prop:flowtangency}. Then, one has:
\begin{itemize}
\item
If $\gamma >0$, or if $\gamma=0$ and $ \Delta <0$, then, for $0<\eps <\eps_0$,
$$
P^e\circ P_\eps(\II)\cap \II=\emptyset
$$
and therefore  $P^e\circ P_\eps$ has no fixed points in the interval  
$\II$.
\item
If 
$\gamma <0 $,   or if $\gamma=0$ and $ \Delta >0$,
the map 
$P^e\circ P_\eps$ is a contraction  in $\II$ for $0<\eps <\eps_0$ and therefore it has a fixed point in this interval.

Let us call $\Gamma _\eps$ the corresponding periodic orbit of the regularized system $Z_\eps$.
\begin{itemize}
\item
If $\gamma<0$ the periodic orbit $\Gamma_\eps$ approaches, as $\eps \to 0$, to the sliding  cycle  $\Gamma_0$ of the Filippov system $Z$ given by
$
\Gamma _0=W^u_+(0,0) \cup \{ (x,0),\ x^*\le x \le 0\}$, where $(x^*,0)= W^u_+(0,0)\cap \Sigma$.
\item
If $\gamma =0$ and $\Delta >0$, the periodic orbit $\Gamma_\eps$ approaches, as $\eps \to 0$, to a grazing  periodic orbit $\Gamma_0$ of the Filippov system $Z$ given by $\Gamma _0=W^u_+(0,0)=W^s_+(0,0)$, which is a hyperbolic attracting periodic orbit of the vector field $\X$.
\end{itemize}
\item
The limit $\Gamma_\eps \to \Gamma _0$ is not uniform in the following sense:
\begin{itemize}
\item
In the region $(x,y) \in \VV^+$, $y>\eps$, one has that $\Gamma_\eps $ is $\eps$-close to $\Gamma_0$.
\item
If we call   $(\gamma_0^\eps ,\eps)=\Gamma_0 \cap \{ (x,\eps), \ x>0\}$, and 
$(\gamma_\eps^\eps ,\eps)=\Gamma_\eps \cap \{ (x,\eps), \ x>0\}$, one has that
$$
\gamma_\eps ^\eps = \OO(\eps ^{\frac{p}{2p-1}}), \quad \gamma_0 ^\eps =\OO(\eps ^{\frac{1}{2}}).
$$
\end{itemize} 
\end{itemize}
\end{theorem}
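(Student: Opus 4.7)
The plan is to combine the asymptotic description of $P_\eps$ on $\II$ given by Theorem~\ref{thm:main} with a first-order Taylor expansion of the smooth external return $P^e$ around $x_0^+$, so as to locate the image $(P^e\circ P_\eps)(\II)$ and compare it with $\II$. First, expanding $P^e$ at $x_0^+$ and substituting the formula for $P_\eps(x)$ (which does not depend on $x$ at the displayed orders), one obtains, uniformly on $\II$,
$$(P^e\circ P_\eps)(x) \;=\; x_0^-+\gamma + c\,\alpha^+\eps + R(\eps),$$
where $R(\eps)$ is of strictly higher order than $\eps$, controlled by the error estimate of Theorem~\ref{thm:main} together with $(P_\eps(x)-x_0^+)^2=O(\eps^2)$. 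Comparing with the right endpoint $\alpha(\eps)=x_0^-+\alpha^-\eps+\beta^-\eps^{2\lambda}+O(\eps^{\lambda+1})$, the difference $\alpha(\eps)-(P^e\circ P_\eps)(x)$ has leading form $-\gamma+\Delta\,\eps$ plus strictly smaller terms. The four sign cases in the statement are then immediate: in the first pair ($\gamma>0$, or $\gamma=0$ with $\Delta<0$) this difference is negative, forcing the image to lie strictly to the right of $\II$ and ruling out fixed points; in the second pair ($\gamma<0$, or $\gamma=0$ with $\Delta>0$) it is positive, so the image lies inside $\II$, the left endpoint $L^-$ being chosen once and for all to accommodate it.

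To produce the fixed point, and thus the periodic orbit $\Gamma_\eps$, in the admissible cases I would show that $P^e\circ P_\eps$ is a strict contraction of $\II$ into itself. The key ingredient is the exponential attraction of the Fenichel slow manifold $\Lambda_\eps$ established in the proof of Theorem~\ref{thm:main}: the whole interval $\II$ is swept onto $\Lambda_\eps$ with exponentially contracting rate in the transverse direction, so $P_\eps|_\II$ is exponentially close to a constant in the $C^1$ sense. Composing with the smooth $P^e$ of bounded derivative yields a Lipschitz constant much smaller than $1$ for $\eps$ small, and Banach's theorem provides a unique fixed point $x^*_\eps$ whose $Z_\eps$-trajectory closes as a periodic orbit.

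For the convergence $\Gamma_\eps\to\Gamma_0$ and its lack of uniformity, the key observation is that $Z_\eps=Z$ outside the strip $|y|<\eps$; continuous dependence of the $\X$-flow then gives the $\eps$-closeness of $\Gamma_\eps$ to $\Gamma_0$ on $\{y>\eps\}$. Inside the strip, the two intersection points must be identified separately: $\gamma^\eps_\eps$ is the exit point of $\Lambda_\eps$ at the upper boundary $y=\eps$, whose horizontal scale is dictated by the distinguished solution of the Riccati-type equation~\eqref{eq:edoeta0pteorema} and is therefore $\OO(\eps^{p/(2p-1)})$ by Theorem~\ref{thm:main}, while $\gamma^\eps_0$ is the intersection of $W^u_+(0,0)$ with $y=\eps$, which by the quadratic tangency of $\X$ to $\Sigma$ at the fold is $\OO(\eps^{1/2})$. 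In the grazing subcase $\gamma=0$, $\Delta>0$, the identification of the limit $\Gamma_0$ with the hyperbolic attracting periodic orbit of $\X$ follows from $W^u_+(0,0)=W^s_+(0,0)$ and from $c\le 0$ together with $\Delta>0$.

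The main obstacle will be precisely the grazing case $\gamma=0$, $\Delta>0$: there the image of $P^e\circ P_\eps$ sits at distance only $\Delta\,\eps$ from the right endpoint of $\II$, so one must verify with care that the polynomial (not exponential) remainders in both the Taylor expansion of $P^e$ and in the asymptotics of Theorem~\ref{thm:main} do not spoil the inclusion $(P^e\circ P_\eps)(\II)\subset\II$. This is where the freedom in choosing $\lambda<p/(2p-1)$ is used: the endpoint correction $\beta^-\eps^{2\lambda}$ is engineered to dominate every leftover higher-order term arising from the composition, so that the self-map property survives.
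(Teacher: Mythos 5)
Your proposal follows essentially the same route as the paper's proof: Theorem \ref{thm:main} collapses $P_\eps(\II)$ onto an interval of size $\OO(\eps^{\frac{2p}{2p-1}})$ around $x_0^+ + \alpha^+\eps$, a Taylor expansion of $P^e$ places the image near $x_0^- + \gamma + c\,\alpha^+\eps$, the comparison of $\gamma$ with $\Delta\eps$ settles the four sign cases, the smallness of the image interval gives the contraction and the fixed point, and the non-uniformity estimates $\gamma_\eps^\eps=\OO(\eps^{\frac{p}{2p-1}})$, $\gamma_0^\eps=\OO(\eps^{\frac12})$ are obtained exactly as in the paper via $\bar P^{-1}$ and the quadratic tangency. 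One small correction of emphasis: in the delicate case $\gamma=0$, $\Delta>0$, since $\beta^-<0$ the endpoint term $\beta^-\eps^{2\lambda}$ must be made \emph{negligible} against $\Delta\eps$ (which is why one takes $\tfrac12<\lambda<\tfrac{p}{2p-1}$), rather than made to dominate the leftover remainders as you phrase it -- with that reading your argument coincides with the paper's.
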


\begin{proof}

We  look for fixed points of the Poincar\'{e} map 
$P^e\circ P_\eps$. 
By Theorem \ref{thm:main}, all the points in the interval
$\II$ 
are send by $P_\eps$ to an interval $\JJ$ of size, at most, $\OO(\eps^{\frac{3p-1}{2p-1}}, \eps^{\frac{p(p+1)}{(2p-1)^2}})$ centered at the point
$x=x^+_0 +\alpha ^+ \eps +\beta^+ (\eta _p(0))^2\eps ^{2p/(2p-1)}$.

The map $P^e$ sends this point to:
\begin{eqnarray*}
&&P^e(x^+_0 +\alpha ^+ \eps +\beta^+ (\eta _p(0))^2\eps ^{2p/(2p-1)})\\
&=&x_0^- + \gamma + c (\alpha ^+ \eps +\beta^+ (\eta _p(0))^2\eps ^{2p/(2p-1)}) + 
\OO(  \eps ^2)\\
&=&
x_0^- + \gamma + c \alpha ^+ \eps + \OO(\eps ^{2p/(2p-1)}).
\end{eqnarray*}
Summarizing,  $P^e\circ P_\eps$ sends the whole interval 
${\cal I}=[ L^-,x^-_0+\alpha ^- \eps + \beta ^- \eps ^{2\la}+\OO( \eps ^{1+\la})] $ to an interval  $J$ of size, 
at most, $\OO(\eps ^{2p/(2p-1)})$ centered at the point
$x^-_0+\gamma +c \alpha ^+ \eps $. Therefore $P^e\circ P_\eps$ is a Lipchitz map with Lipchitz constant of order, at most, $\OO(\eps ^{2p/(2p-1)})$.

A sufficient condition to ensure that $J\subset {\cal I}$ and therefore that $P^e\circ P_\eps$ is a contraction, is that   
$x^-_0+\gamma +c \alpha ^+ \eps  <x^-_0+\alpha ^- \eps $. Let us call $\Delta = \alpha _- - c \alpha _+$.  
This condition is  verified if 
\begin{equation}\label{condper}
\gamma <\Delta \eps.
\end{equation}

Assume $\gamma>0$. 

In this case, taking $\eps>0$ small enough, if $\Delta \ge 0$, we can ensure that $0<\Delta \eps <\gamma$ and if $\Delta <0$, one has  $\Delta \eps<0 <\gamma$ for any positive $\eps$. Therefore in any case one has 
$$
\Delta \eps <\gamma
$$ 
which implies that   
$P^e\circ P_\eps (\II)\cap \II =\emptyset$. The same happens for $\ga=0$ if $\Delta <0$.

Assume $\gamma<0$.

If $\Delta \ge 0$  condition  \eqref{condper} is verified for any positive $\eps$. 
If $\Delta <0$ then taking $0<\eps <\frac{\gamma}{\Delta}$  condition \eqref{condper} is also verified.
Then, If $\gamma <0$, taking $\eps $ small enough one can ensure that  $P^e\circ P_\eps ({\mathcal I}) \subset J \subset {\mathcal I}$
and then  the map $P^e\circ P_\eps$  is a contraction. 
Consequently, there is a unique fixed point in the interval $J\subset \II$ which gives rise to a periodic orbit $\Gamma_\eps$.
Observe that the non-smooth system $Z$ has, in this case, a sliding cycle $\Gamma_0 = W^u_+(0,0) \cup \{ (x,0), \ x^*\le x\le 0\}$, 
where 
$(x^*,0)=W^u_+(0,0)\cap \Sigma$. 
Clearly, $\Gamma_\eps $ is  $\eps$-close to $\Gamma_0$ in the region $\{(x,y),\ y>\eps\}$.

Analogously, if $\gamma=0$, then one can ensure that condition \eqref{condper} is verified if  $\Delta >0$.
Observe that, in this case, $\Gamma_0 = W^u_+(0,0)$ is a grazing periodic orbit of $\X$ and is $\eps$-close to $\Gamma_\eps$.

To finish the proof let us observe that, on one hand, $\Gamma_0\cap \{(x,\eps),\ x>0\} = (\gamma_0^\eps, \eps)$ with 
$\gamma_0^\eps =\OO(\sqrt{\eps})$.

On the other hand $\Gamma_\eps\cap \{(x,\eps),\ x>0\} = (\gamma^\eps_\eps, \eps)$
and, using \eqref{eq:ppg}:
$$
\gamma^\eps_\eps= \bar P^{-1}(x_0^+ +\alpha ^+ \eps +\beta^+ (\eta(0))^2\eps ^{\frac{2p}{2p-1}} 
+ \OO(\eps^{\frac{3p-1}{2p-1}}, \eps^{\frac{p(p+1)}{(2p-1)^2}})) = \eta(0)\eps ^{\frac{p}{2p-1}} (1+o(1)) .
$$
\end{proof}

\begin{remark}\label{rem:delta}
To give a geometrical interpretation of the condition $\Delta >0$ let us observe the following.
We are assuming that $W^s_+(0,0)\cap \Sigma _{y_0}^-=(x_0^-,y_0)$, but also
 $W^u_+(0,0)\cap \Sigma _{y_0}^-=(x_0^-+\gamma,y_0)$. Therefore, if we consider  the Poincar\'{e} return map
associated to the regular vector field $\X$:
 $$
 \pi ^+:\Sigma_{y_0}^- \to \Sigma_{y_0}^-
 $$
and one has that $\pi ^+(x_0^-)=x^-_0+\gamma$. 
 
 Clearly, the case $\gamma=0$ corresponds to the case that the vector field $\X$ has a grazing periodic orbit. This orbit  is hyperbolic attracting when 
 $|(\pi^+)'(x_0^-)|<1$ and repelling when $|(\pi^+)'(x_0^-)|>1$. 
 
Let us point our that, by \eqref{xepsilon}, we know that  the point $(x_\eps,\eps)$ where the vector field $\X$ is tangent to $\Sigma _\eps$ verifies
$$
\bar x_\eps=P^{-1}(x_\eps) = x_0^-+\alpha^- \eps + \OO(\eps ^2)
$$
but the orbit of this point for the vector field $Z_\eps$ coincides with the orbit  given by  the vector field $\X$, therefore,
one  has that 
$$
\pi ^+(\bar x_\eps)= P^e(P_\eps(\bar x_\eps)).
$$
Now, we compute:
$$
P^e(P_\eps(\bar x_\eps))=P^e (\bar P(x_\eps))=P^e(x_0^+ +\alpha ^+ \eps + \OO(\eps ^2))= x_0^- +\gamma + c \alpha ^+ \eps + \OO(\eps ^ 2).
$$ 
If we Taylor expand the map $\pi^+$ around $x_0^-$:
$$
\pi ^+(\bar x_\eps)= \pi ^+ (x_0^-) + (\pi^+)'(x_0^-) (\bar x_\eps-x_0^-) 
+\OO(\bar x_\eps-x_0^-)^2= x_0^- +\gamma+ (\pi^+)'(x_0^-) \alpha ^- \eps + \OO(\eps ^2)
$$
and then we obtain:
$$
c \alpha ^+ = (\pi^+)'(x^-_0) \alpha^-
$$
therefore, $\Delta = \alpha ^-(1-(\pi^+)'(x_0^-))$, and 
the condition $\Delta >0$ is equivalent to $0<(\pi^+)'(x_0^-)<1$.
In the case $\gamma =0$ this condition is equivalent to ask that the periodic orbit of $\X$ is a hyperbolic attracting periodic orbit.

\end{remark}

In view of Remark \ref{rem:delta}, Theorem \ref{thm:main} and proposition \ref{prop:flowtangency} do not enable us to analyze the persistence of periodic orbits of the regularized system in the case that $\X$ has a centre. This is done in next Theorem \ref{thm:centre}. Previously, in next Proposition \ref{prop:comparacio}, we give some relations 
between the map $P_\eps$ and $P^+$,   the Poincar\'{e} map associated to the vector field $\X$ as a regular vector field in $\VV^+\cup \VV^-$:
\begin{equation}\label{eq:poincareX}
P^+: D^+\times\{y_0\}\subset \Sigma _{y_0}^-\to \Sigma _{y_0}^+.
\end{equation}
Clearly, there exists   a suitable constant $k <x_0^-$, which depends of the global properties of $\X$, such that $[k, \bar x_\eps ]\subset D^+$.
\begin{proposition}\label{prop:comparacio}
Let be $(x_0^-, y_0)=W^{s}_+(0,0)\cap \Sigma ^-_{y_0} $ and $\bar x_\eps$ given in \eqref{eq:tangent} and \eqref{eq:tangentbar}. 
Then, for any $x\in [x^-_0, \bar x_\eps]$ one has that 
$$
P_\eps (x)< P^+(x).
$$
\end{proposition}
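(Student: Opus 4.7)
The plan is to introduce a local first integral of $\X$ and to monitor how it evolves along the regularized orbit. In a simply connected neighborhood $U$ containing the $\X$-orbit from $(x_0^-,y_0)$ through the fold to $(x_0^+,y_0)$, the vector field $\X$ is non-vanishing (it is smooth and $\X_1(0,0)>0$ by \eqref{cond:visiblefold1}), so there exists a smooth function $H:U\to\RR$ with $\X\cdot\nabla H=0$ and $\partial_y H>0$; in the normal form of a visible fold one may take $H(x,y)=y-x^2/2$. Along $\Sigma^+_{y_0}$ the relation $\X\cdot\nabla H=0$ gives
\begin{equation*}
\partial_x H(x,y_0)=-\frac{\X_2(x,y_0)}{\X_1(x,y_0)}\,\partial_y H(x,y_0)<0
\end{equation*}
in a neighborhood of $x_0^+$, because $\X_2(x_0^+,y_0)>0$ (the $\X$-orbit crosses $\Sigma^+_{y_0}$ outward as it ascends out of the fold region).

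Fix $x\in (x_0^-,\bar x_\eps)$. Since $Z_\eps\equiv \X$ on $\{y\ge\eps\}$, both orbits from $(x,y_0)$ coincide there and enter the strip $\{|y|\le\eps\}$ through a common point, the $\X$-orbit truly diving into the interior of the strip. A direct computation starting from \eqref{regularizedvf} and using $\partial_x H=-\X_2\,\partial_y H/\X_1$ yields
\begin{equation*}
\frac{dH}{dt}\bigg|_{Z_\eps}\;=\;\frac{\partial_y H}{\X_1}\cdot\frac{1-\varphi(y/\eps)}{2}\bigl(\X_1\Y_2-\Y_1\X_2\bigr),
\end{equation*}
while the same derivative along $\X$ vanishes identically. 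The right-hand side is strictly positive in the strip near the origin: each factor has the stated positive sign, thanks to $\partial_y H>0$, $\X_1>0$, $1-\varphi(y/\eps)>0$ for $y<\eps$, and \eqref{positivefilipov} extended by continuity (at the origin the parenthesis equals $\X_1(0,0)\Y_2(0,0)>0$). Hence $H$ strictly increases along the sub-arc of the $Z_\eps$-orbit contained in the strip and is conserved along $\X$, so $H(P_\eps(x),y_0)>H(P^+(x),y_0)$; the strict monotonicity $\partial_x H<0$ along $\Sigma^+_{y_0}$ proved above then translates this into $P_\eps(x)<P^+(x)$.

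The boundary case $x=x_0^-$ is covered by Theorem \ref{thm:main}, which gives $P_\eps(x_0^-)=x_0^++\alpha^+\eps+o(\eps)<x_0^+=P^+(x_0^-)$ since $\alpha^+<0$ by Proposition \ref{prop:flowtangency}. The delicate issue is ensuring that $H$, its sign $\partial_y H>0$, and the positivity of $\X_1\Y_2-\Y_1\X_2$ can all be controlled uniformly along the whole sub-arc of the $Z_\eps$-orbit lying in the strip; this is in particular nontrivial for $x$ close to $x_0^-$, where this sub-arc drifts along the Fenichel sliding manifold before exiting near the fold. The extension is possible because $\X$ is smooth and non-vanishing on an open neighborhood of $\Sigma^s\cup\{(0,0)\}$, and \eqref{positivefilipov} together with continuity ensure positivity of $\X_1\Y_2-\Y_1\X_2$ throughout that neighborhood.
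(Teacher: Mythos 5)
Your proof is correct and is essentially the paper's argument in different clothing: since $\nabla H$ is proportional to the normal $\X_N=(\X_2,-\X_1)$ to the $\X$-orbits, your identity $\frac{dH}{dt}\big|_{Z_\eps}=\frac{\partial_y H}{\X_1}\,\frac{1-\varphi(y/\eps)}{2}\,(\X_1\Y_2-\Y_1\X_2)$ is exactly the paper's scalar product $\langle \X_N,Z_\eps\rangle$ (with the correct factor $\frac{1-\varphi}{2}$; the paper's displayed $\frac{1+\varphi}{2}$ is an inconsequential typo), and "$H$ increases along $Z_\eps$ while $\partial_x H<0$ on $\Sigma^+_{y_0}$" is just the paper's "the $\X$-orbit bounds the $Z_\eps$-orbit from below", transported to $\Sigma^+_{y_0}$ instead of compared on $\Sigma^+_\eps$. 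The only cosmetic points are that for the normal form \eqref{def:X} the first integral is $y-x^2$ rather than $y-x^2/2$, and that (exactly as in the paper's own proof) strictness degenerates at the right endpoint $x=\bar x_\eps$, where the tangent orbit of $\X$ is also an orbit of $Z_\eps$ and one gets equality rather than $<$.
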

\begin{proof}
As the vector fields $Z_\eps$ and $\X$ are the same in the region $y\ge \eps$ we will take the initial condition at $(x,\eps)$ for $x\in [x^-_\eps,  x_\eps]$
where $(x^-_\eps,\eps)=W^s_+(0,0)\cap \Sigma_\eps ^-$.

Consider the flow 
$\phi_{\X}(t;x,\eps)$. As the vector field $\X$ points down in $\Sigma _\eps ^-$ and the orbits can not cross the pseudoseparatrix of the fold point, the orbits remain in the region $\{(x,y), \ 0\le y\le \eps\}$ until they cross $\Sigma _\eps ^+$. Also, taking $\eps$ small enough, one can assume that \eqref{positivefilipov} is satisfied in this region.

Denote by $ (x(t),y(t))= \phi_{\X}(t;x,\eps)$ and by 
$$
\X_N= \X_N(x(t),y(t))= (\dot y(t),-\dot x(t))=\left(\X_2(x(t),y(t)), -\X_1(x(t),y(t))\right)
$$ 
the normal exterior vector to the orbit.
Then, we perform the scalar product:
\begin{eqnarray*}
<\X_N,Z_\eps>(x(t),y(t))&=&\X_2\left(\frac{\X_1+\Y_1}{2}+\varphi(\frac{y(t)}{\eps})\frac{\X_1-\Y_1}{2}\right)(x(t),y(t))\\
&-&
\X_1\left(\frac{\X_2+\Y_2}{2}+\varphi(\frac{y(t)}{\eps})\frac{\X_2-\Y_2}{2}\right)(x(t),y(t))\\
&=& \left(\frac{1+\varphi(\frac{y(t)}{\eps})}{2}\right)\left(\X_2\Y_1-\X_1\Y_2\right)(x(t),y(t))<0
\end{eqnarray*}
Then, as both vector fields are smooth and, except at $(x_\eps,\eps)$, they are not tangent to $\Sigma_\eps=\Sigma_\eps^-\cup\Sigma_\eps^+$, the orbit of   $\X$ strictly bounds $Z_\eps$ from bellow and therefore, if we denote by $t_1$ and $t_2$ the times when $\pi_y (\phi_{\X}(t_1;x,\eps))=\pi_y (\phi_{Z_\eps}(t_2;x,\eps))=\eps$, one has that
$\pi_x (\phi_{\X}(t_1;x,\eps))>\pi_x (\phi_{Z_\eps}(t_2;x,\eps))$.
\end{proof}

\begin{figure}
\includegraphics[width=13cm]{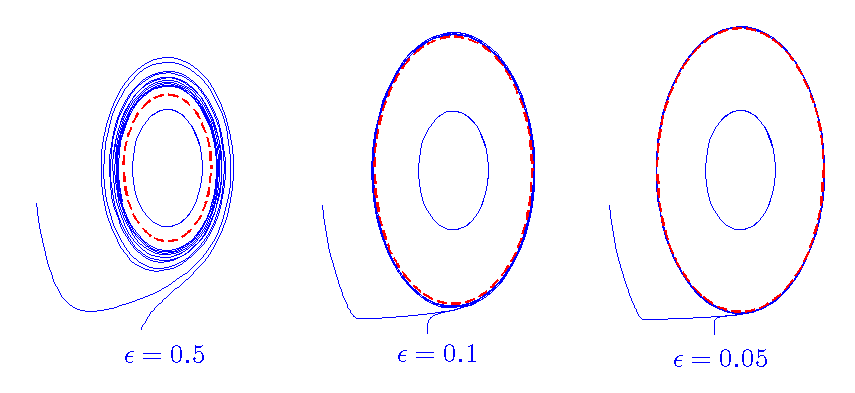}
\caption{behavior of the regularized system
in the case $\X$ has a center, for different values of the regularizing parameter $\eps$.}
\label{fig:center}
\end{figure}

\begin{theorem}\label{thm:centre}
Suppose that $\X$ has a center in $\VV^+$ surrounded by periodic orbits which intersect the switching surface $\Sigma$.

Then, for $\eps$ small enough the unique tangent orbit to $\Sigma _\eps= \Sigma _\eps^+\cup\Sigma _\eps^-$ of $\X$ is a periodic orbit of $Z_\eps$ that is semistable: it is attracting for all the orbits exterior to it but its interior is foliated by periodic orbits.
\end{theorem}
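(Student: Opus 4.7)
The plan has three parts matching the three assertions: identifying $\Gamma_\eps^*$, showing its interior is foliated by $Z_\eps$-periodic orbits, and showing exterior attraction. The first two are direct consequences of the center structure and $\varphi(1)=1$; the third uses the comparison in Proposition \ref{prop:comparacio}.

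First, the implicit function theorem applied to $\X_2(x,\eps)=0$ (using $\partial_x\X_2(0,0)\ne 0$) gives, for $\eps$ small, a unique tangency point $(x_\eps,\eps)\in\Sigma_\eps^+$ with $x_\eps=\OO(\eps)$. Let $\Gamma_\eps^*$ denote the $\X$-orbit through it. By continuous dependence on $\eps$, $\Gamma_\eps^*$ is close to the closed $\X$-orbit $\Gamma_0$ through the origin (whose existence follows from the center hypothesis, since every $\X$-orbit near the center is periodic). Thus $\Gamma_\eps^*$ is a periodic orbit of $\X$ in the nested family around the center, contained in $\{y\ge\eps\}$ with sole contact at $(x_\eps,\eps)$. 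Since $\varphi(1)=1$, $Z_\eps$ coincides with $\X$ on $\{y\ge\eps\}$, so $\Gamma_\eps^*$ is also a periodic orbit of $Z_\eps$; and any $\X$-periodic orbit strictly inside $\Gamma_\eps^*$ lies in $\{y>\eps\}$, hence is a periodic orbit of $Z_\eps$, giving the interior foliation.

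For the exterior attraction I would set up a monotone iteration. Take an initial condition outside $\Gamma_\eps^*$ in an annular neighborhood. While the $Z_\eps$-orbit stays in $\{y>\eps\}$ it coincides with an $\X$-periodic orbit $\gamma_0\supsetneq\Gamma_\eps^*$; upon first entering the upper strip, the proof of Proposition \ref{prop:comparacio} shows the $Z_\eps$-orbit stays strictly below the $\X$-trajectory and exits $\Sigma_\eps^+$ at some $(x_2,\eps)$ strictly to the left of the $\X$-exit point $(x_2',\eps)$. Provided $x_2>x_\eps$ (addressed in the next paragraph), the orbit then coincides with a strictly smaller $\X$-periodic orbit $\gamma_1$ with $\Gamma_\eps^*\subsetneq\gamma_1\subsetneq\gamma_0$. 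Iterating produces a strictly nested sequence $\gamma_n$; its limit is a fixed point of the iteration, hence an $\X$-orbit that does not transversally cross the strip, forcing $\gamma_\infty=\Gamma_\eps^*$. Formalizing this as a contraction of the Poincar\'e return map on a transversal section to $\Gamma_\eps^*$, with fixed point at $\Gamma_\eps^*$, and invoking Poincar\'e--Bendixson together with the $Z_\eps$-invariance of the interior, one concludes that every exterior orbit has $\omega$-limit equal to $\Gamma_\eps^*$; orbits starting outside $\Gamma_0$ reach this basin in finite time because $\Y_2(0,0)>0$ prevents trapping in $\VV^-$.

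The main obstacle is verifying $x_2>x_\eps$, ensuring the $Z_\eps$-orbit truly exits $\Sigma_\eps^+$ on the $\X_2>0$ side of the tangency and so lands on an exterior $\X$-orbit smaller than $\gamma_0$. This is precisely the slow-manifold content of Section \ref{sec:proof}: the attracting slow manifold $\Lambda_\eps$ in the strip extends past the fold and exits $\Sigma_\eps^+$ at distance $\OO(\eps^{p/(2p-1)})$ to the right of $x_\eps$, and orbits entering from $\Sigma_\eps^-$ are exponentially attracted to it. This secures both the monotone decrease of the iteration and a uniform contraction rate in a neighborhood of $\Gamma_\eps^*$, completing the proof of semistability.
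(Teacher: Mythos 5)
Your proposal is correct and takes essentially the same route as the paper: the tangent $\X$-orbit is a $Z_\eps$-periodic orbit because $Z_\eps=\X$ on $\{y\ge\eps\}$, the interior foliation is immediate from the center, and exterior attraction follows from Proposition \ref{prop:comparacio} together with the fact that the $\X$-return is the identity, producing a monotone sequence of nested orbits converging to the tangent one. The paper runs this monotone iteration through the section map $P^e\circ P_\eps$ (using $P^e\circ P^+=\pi^+=\mathrm{Id}$ and $P^e\circ P_\eps(\bar x_\eps)=\bar x_\eps$), whereas you phrase it geometrically and make explicit the appeal to the Section \ref{sec:proof} results for orbits exiting the strip to the right of the tangency; these are the same argument.
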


\begin{proof}
Consider the Poincar\'{e} map $P_\eps = \bar P\circ \PP_\eps\circ P$, and the return map $P^e\circ P_\eps$. It is clear that $P^e\circ P_\eps(\bar x_\eps)=\bar x_\eps$, where $\bar x_\eps$ is defined in \eqref{eq:tangentbar}, because the orbit through $\bar x_\eps$ is tangent to $\Sigma_\eps$, and therefore, being a periodic orbit of $\X$, is also a periodic orbit of $Z_\eps$. 

It is also important to note that $P^e\circ P^+=\pi ^+$, where $P^+$ is given in \eqref{eq:poincareX}, and we know that, as $\X$ has a center in $\VV^+$,  $\pi ^+(x)=x$ for all the points in its domain.

Now, as $\bar x_\eps\in \II$, given in Theorem \ref{thm:main}, if we take $x<\bar x_\eps$ one has, by proposition \ref{prop:comparacio}, that $P^+(x)>P_\eps (x)$, and therefore, as $P^e$ is decreasing (orbits in the plane can not intersect)
$$
x= P^e\circ P^+(x) < P^e\circ P_\eps (x)
$$
which gives that  $(P^e\circ P_\eps)^n (x)$ forms a strictly increasing sequence whose limit is the fixed point $\bar x_\eps$.
\end{proof}

\subsection{The grazing-sliding  bifurcation of periodic orbits}

Let us now consider some classical bifurcations of periodic orbits in non-smooth systems and see how they behave after the regularization.

Consider a family $Z_\mu$ of non-smooth planar systems such that they undergo a grazing sliding bifurcation of a hyperbolic attracting or repelling periodic orbit of the vector field $\X_\mu$ at $\mu=0$. 
Next theorem  shows how these bifurcations behave in  the corresponding regularized family $Z_{\mu,\eps}$.

\begin{theorem}\label{thm:sella-node}
Let  $Z_\mu$, $\mu \in \RR$ be a family of non-smooth planar systems that undergoes a grazing sliding bifurcation of a hyperbolic periodic orbit $\Gamma_\mu$ of the vector field $\X_\mu$ at $\mu=0$. We assume that, for $\mu>0$ the periodic orbit  $\Gamma_\mu$  is entirely contained in $\VV^+$, it becomes tangent to $\Sigma$ for $\mu=0$ and intersects both regions $\VV^\pm$ for $\mu<0$.
 
Consider the regularized family $Z_{\mu,\eps}$.
\begin{itemize}
\item
If $\Gamma_\mu$ is attracting, the regularized system has a  periodic orbit $\Gamma_{\mu,\eps}$ for any $\eps$, $\mu$ small enough. No bifurcation exists in the regularized system.
\item
If $\Gamma_\mu$ is repelling, the regularized system has a  periodic orbit $\Gamma_{\mu,\eps}$ for any  $\mu>0$ and $0<\eps<\eps _0(\mu)$ which coexists with the periodic orbit $\Gamma_\mu$ contained in $\VV^+\cap \{ (x,y), \ y>\eps\}$. 
For $\mu \le  0$ small enough, the system has no periodic orbits near $\Gamma_0$ if $\eps$ is small enough. 
Therefore the family $Z_{\mu\eps}$ undergoes a saddle node bifurcation of periodic orbits at $\mu=0$.
\end{itemize}
\end{theorem}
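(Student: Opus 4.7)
The plan is to reduce the result to Theorem \ref{thm:po} and Remark \ref{rem:delta} by tracking how the key quantities $\gamma(\mu)$ and $\Delta(\mu)$ depend on the parameter $\mu$. The grazing hypothesis $W^u_+(0,0)\cup W^s_+(0,0)=\Gamma_0$ at $\mu=0$ gives $\gamma(0)=0$, and the transversality of the grazing bifurcation (the usual meaning of a generic one-parameter family) gives $\gamma'(0)\neq 0$. By Remark \ref{rem:delta}, $\Delta(0)$ has the sign of $1-(\pi^+_0)'(x_0^-(0))$, where this derivative is precisely the characteristic multiplier of the hyperbolic periodic orbit $\Gamma_0$; hence $\Delta(0)>0$ in the attracting case and $\Delta(0)<0$ in the repelling case.

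In the attracting case, for $\mu>0$ the orbit $\Gamma_\mu$ is bounded away from $\Sigma$ by some $c(\mu)>0$ and, for any $\eps<c(\mu)$, is itself an orbit of $Z_{\mu,\eps}$ which we take as $\Gamma_{\mu,\eps}$. For $\mu=0$ we will invoke Theorem \ref{thm:po} with $\gamma=0$, $\Delta>0$, producing an attracting $\Gamma_{0,\eps}$ close to the grazing cycle $\Gamma_0$. For $\mu<0$ the bifurcation sign convention (the would-be orbit crosses $\Sigma$) yields the sliding-cycle regime $\gamma(\mu)<0$, and Theorem \ref{thm:po} again produces an attracting $\Gamma_{\mu,\eps}$ close to the sliding cycle. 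Pasting these three pieces gives a continuous family through $\mu=0$; in particular no bifurcation occurs.

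In the repelling case, we Taylor-expand the smooth Poincar\'e map $\pi^+_\mu$ about its fixed point $x^*_\mu=\Gamma_\mu\cap\Sigma_{y_0}^-$ to obtain $\gamma(\mu)=((\pi^+_\mu)'(x^*_\mu)-1)(x_0^-(\mu)-x^*_\mu)+O((x_0^-(\mu)-x^*_\mu)^2)$. The geometric factor $x_0^-(\mu)-x^*_\mu$ is linear in $\mu$ with the same sign as in the attracting case, while the multiplier factor $(\pi^+_\mu)'(x^*_\mu)-1$ now has the opposite sign; hence $\gamma'(0)$ is reversed, giving $\gamma(\mu)<0$ for $\mu>0$ and $\gamma(\mu)>0$ for $\mu<0$. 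Consequently, for $\mu>0$ the orbit $\Gamma_\mu\subset\VV^+\cap\{y>\eps\}$ survives as a repelling orbit of $Z_{\mu,\eps}$, while Theorem \ref{thm:po} with $\gamma<0$ and $\Delta<0$ produces a second, attracting periodic orbit $\Gamma_{\mu,\eps}$ close to the fold for $0<\eps<\eps_0(\mu)$; for $\mu=0$, $\gamma=0$ combined with $\Delta<0$ rules it out, and for $\mu<0$, $\gamma>0$ does the same. The two branches for $\mu>0$ have opposite stability and merge at $\mu=0$, which, by the standard implicit-function argument applied to the difference of the return maps, we will identify as a generic saddle--node bifurcation of periodic orbits.

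The most delicate step is the sign analysis of $\gamma(\mu)$: specifically, the identification that the hyperbolic multiplier of $\Gamma_0$, and not purely geometric data, controls whether $\gamma$ is negative on the $\mu>0$ side or on the $\mu<0$ side. Everything else is bookkeeping: applying Theorem \ref{thm:po} in each subregion, checking that $\eps_0(\mu)$ can be chosen so that both the persistence of $\Gamma_\mu$ in $\VV^+$ and the existence of the orbit near the fold hold simultaneously, and verifying the transversality at $\mu=0$ required for the generic saddle--node claim.
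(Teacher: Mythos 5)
Your overall route is the same as the paper's: reduce everything to Theorem \ref{thm:po} and Remark \ref{rem:delta}, with the sign of $\gamma(\mu)$ controlled by the multiplier of $\Gamma_0$. Your expansion $\gamma(\mu)=((\pi^+_\mu)'(x^*_\mu)-1)(x_0^-(\mu)-x^*_\mu)+\OO((x_0^-(\mu)-x^*_\mu)^2)$ is correct and is in fact a cleaner justification of what the paper simply normalizes ($P^e(x_0^+)=x_0^-+\mu$ in the attracting case, $x_0^--\mu$ in the repelling one). However, there is a genuine gap: you never treat the transitional regime $\mu=\OO(\eps)$, and this is precisely where the content of the theorem lies. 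In the attracting case, for $0<\mu\lesssim \Delta\eps$ the orbit $\Gamma_\mu$ of $\X_\mu$ still enters the strip $\{0\le y\le\eps\}$, so it is not an orbit of $Z_{\mu,\eps}$ and your persistence argument (valid only for $\eps<c(\mu)$) does not apply; at the same time Theorem \ref{thm:po}, used as a black box with fixed $\gamma=\mu>0$, yields \emph{non-existence} for $\eps$ small depending on $\mu$, not existence. Hence your three pieces ($\mu<0$, $\mu=0$, $\eps<c(\mu)$) do not cover a full neighborhood of $(\mu,\eps)=(0,0)$ with $\eps>0$, and the claims ``periodic orbit for any $\eps,\mu$ small enough'' and ``no bifurcation'' are unproved exactly in the wedge $0<\mu<\Delta\eps$. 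The paper closes this by going inside the proof of Theorem \ref{thm:po}: the contraction only requires \eqref{condper}, i.e.\ $\gamma<\Delta\eps$, which in the scaling $\mu=\tilde\mu\eps$ holds for $\tilde\mu<\Delta$; it then computes, via $\pi^+$ and \eqref{xepsilon}, that $\tilde\mu=\Delta$ is (to first order) the value at which $\Gamma_\mu$ leaves the regularized strip, so the orbit produced near the fold matches continuously with $\Gamma_\mu$ and no bifurcation occurs.

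The same issue affects your repelling case: the ``merge at $\mu=0$'' is asserted through a ``standard implicit-function argument applied to the difference of the return maps,'' but this is a promissory note — near the fold the return map is an $\OO(\eps^{2p/(2p-1)})$-Lipschitz contraction and the second orbit is $\Gamma_\mu$ itself, which only interacts with the regularization once it enters the strip; the collision must be located in the scaling $\mu=\tilde\mu\eps$ (the paper shows $\Gamma_\mu$ enters the interval affected by the regularization at $\tilde\mu\approx-\Delta$ and there meets $\Gamma_{\mu,\eps}$, both disappearing as $\mu\to 0^+$). To be fair, the paper does not verify a nondegeneracy condition for the saddle--node either, so on that final point you are at a comparable level of rigor; but the analysis of the $\mu\sim\eps$ regime, which you omit entirely, is needed in both bullets and should be added, e.g.\ by reusing condition \eqref{condper} with $\gamma=\gamma(\tilde\mu\eps)$ rather than invoking Theorem \ref{thm:po} only for fixed $\mu$.
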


\begin{figure}
\begin{center}
\includegraphics[width=13cm]{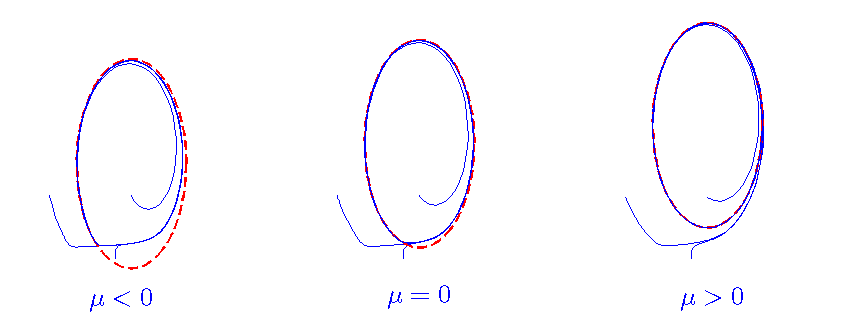}
\end{center}
\caption{No bifurcation of periodic orbits in  the regularized system corresponding with the grazing-sliding 
bifurcation in the Filipov system: case of a atracting periodic orbit.}
\label{grazingatractor}
\end{figure}

\begin{figure}
\begin{center}
\includegraphics[width=13cm]{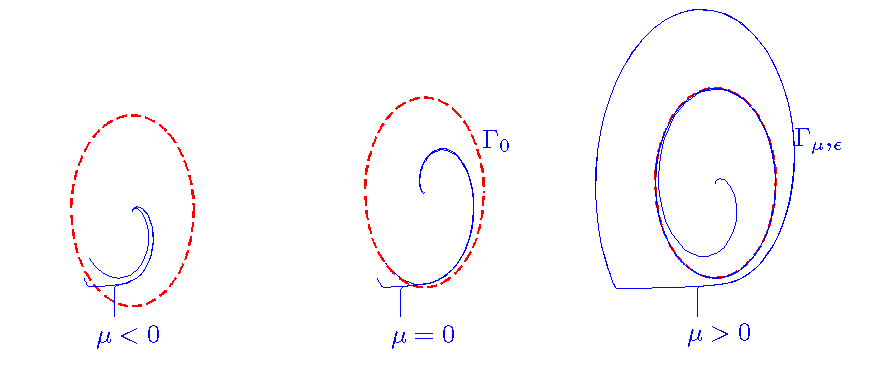}
\end{center}
\caption{Saddle-node bifurcation in the regularized system
corresponding with the grazing-sliding bifurcation in the Filipov system: case of a repelling periodic orbit.}
\label{grazingrepellor}
\end{figure}

\begin{proof}
One can assume that the fold point, which exists for $\mu$ small enough, is located at $(0,0)$.
As usual, we denote by $(x_0^\pm,y_0)= W^{u,s}(0,0)\cap \Sigma _{y_0}^\pm $, the intersection of its stable and unstable pseudo-separatrices with $\Sigma_{y_0}^\pm$ and we also assume that $x_0^\pm$ are independent of $\mu$.

Assume that the periodic orbit $\Gamma_\mu$ of the vector field $\X_\mu$ is attracting. 
In this case, for $\mu >0$, $\Gamma_\mu$ which is contained in $\VV ^+$, it becomes  tangent to $\Sigma$ for  $\mu =0$, and  then crosses $\Sigma $ for $\mu<0$ but, being $\Gamma_\mu$ attracting, a sliding cycle $\tilde \Gamma_\mu$ for the non-smooth system $Z$ appears. Observe that $\tilde \Gamma _0=\Gamma_0$.

Our external map $P^e$ satisfies, for $\mu=0$,  $P^e(x_0^+)=x_0^-$, and we can assume, without loss of generality that for $\mu$ small enough
the map $P^e$ is defined and verifies $P^e(x_0^+)= x_0^-+\mu$. 

By Theorem \ref{thm:po}, using $\gamma=\mu$,  we know that, for $\mu<0$, system $Z_{\mu,\eps}$ has a periodic orbit for $\eps$ small enough.
The result is also true for $\mu=0$ because, as $\Gamma_\mu$ is attracting, we have  by Remark 
\ref{rem:delta} that $\Delta >0$.
For $\mu >0$ we observe that, in the proof of Theorem \ref{thm:po}, the condition required to the existence of  a periodic orbit of $Z_\eps$ is 
\eqref{condper}, therefore, as  $\Delta >0$, if we write $\mu =\tilde \mu \eps$, condition \eqref{condper} is verified until 
$$
\tilde \mu <\Delta
$$
and therefore the periodic orbit which existed for $\mu <0$ persists for these values of $0\le \mu\le \Delta \eps$ if $\eps $ is small enough.

The case $\tilde \mu=\Delta$ corresponds, in first order, to the value of the parameter $\mu$ where the periodic orbit $\Gamma_\mu$ of the vector field $\X_\mu$ is entirely contained in the region 
$\{y> \eps \}$ and therefore, $\Gamma_\mu$ is also a  periodic orbit of $Z_{\mu,\eps}$, 
because $Z_{\mu,\eps}=\X$ in this region.

In fact, if we consider the return Poincar\'{e} map $\pi^+$  in $\Sigma_{y_0}^-$ associated to the vector field $\X_\mu$, and we denote by $(x_\mu,y_0)=(x_0^-+\mu^*,y_0)$ the intersection of  the periodic orbit $\Gamma_\mu$ of $\X_\mu $ with $\Sigma_{y_0}^-$, one has:
$$
x_0^-+\mu^*=\pi^+( x_0^-+\mu^*)=x_0^-+\mu + \pi'(x_0^-) \mu^* + O(\mu^*)^2,
$$ 
which gives   $\mu^* =\frac{\mu}{1-(\pi^+)'(x_0^-)}+ \OO(\mu^2)= \frac{\tilde \mu \eps }{1-(\pi^+)'(x_0^-)} + \OO(\eps^2)$.

Then, for $\mu =\tilde \mu \eps$, the periodic orbit $\Gamma _\mu$ of $\X_\mu$ intersects $\Sigma _{y_0}^-$ in a point
$$
x_0^-+\frac{\tilde \mu \eps }{1-(\pi^+)'(x_0^-)} + \OO(\eps ^2)
$$ 
and, by \eqref{xepsilon},  this point belongs to the interval $[K^-,P^{-1}(x_\eps)]=[K^-,\bar x_\eps]$ if
$$
\frac{\tilde \mu \eps }{1-(\pi^+)'(x_0^-)} \le \alpha ^- \eps + \OO(\eps ^2)
$$
which gives $\tilde \mu \le  \alpha ^- (1-(\pi^+)'(x_0^-))= \Delta $. 
Therefore, for $\tilde \mu \le \Delta$  the periodic orbit $\Gamma_{\tilde \mu \eps}$ belongs to the interval affected by the regularization, but when 
$\tilde \mu >  \Delta$ the periodic orbit does not intersect the region affected by the regularization. 
Therefore the periodic orbit $\Gamma_\mu$ of the vector field $\X_\mu$ is the continuation of the periodic orbit $\Gamma_{\mu, \eps}$ of $Z_{\mu,\eps}$, for $\mu \ge \Delta \eps$.

Assume now that the periodic orbit $\Gamma_\mu$ of the vector field $\X_\mu$ is repelling. Then, by Remark \ref{rem:delta},  one has $\Delta <0$.

Again, we assume that, for $\mu<0$, $\Gamma_\mu$ crosses $\Sigma$,  becomes  tangent to $\Sigma$ for $\mu=0$ and  then is contained in $\VV ^+$ for $\mu>0$. 
Therefore, in this case, for $\mu>0$, being $\Gamma_\mu$ repelling,  we have the co-existence of this periodic orbit of $\X_\mu$ and a sliding cycle $\tilde \Gamma_\mu$ of the non-smooth system $Z_\mu$. Both collide at $\mu=0$ and then disappear.

Our external map $P^e$ satisfies, for $\mu=0$,  $P^e(x_0^+)=x_0^-$, and we can assume, without loss of generality that for $\mu$ small enough
the map $P^e$ is defined and verifies $P^e(x_0^+)= x_0^- -\mu$. 

By Theorem \ref{thm:po}, using $\gamma=-\mu$,  we know that, for $\mu>0$,  the regularized vector field $Z_{\mu,\eps}$ has a periodic orbit $\Gamma_{\mu,\eps}$ for $\eps$ small enough.

Let us observe that the periodic orbit $\Gamma_{\mu,\eps}$ intersects $\Sigma_{y_0}^-$ in a point $(x_p,y_0)$, with $x_p- x_0^-=\OO(\eps) $. But the periodic orbit $\Gamma_\mu$ of the vector field $\X_\mu$  intersects $\Sigma_{y_0}^-$ in a point $(x_\mu,y_0)$, with $x_\mu-x_0^-=\OO(\mu)$, therefore both periodic orbits coexist.

When $\mu=0$, as the tangent periodic orbit  $\Gamma_0$ is repelling, one has that $\Delta <0$, and therefore, by Theorem  \ref{thm:po}, there is no periodic orbit in the regularized system $Z_{0,\eps}$ for $\eps$ small enough.

When   $\mu =\tilde \mu \eps$, 
one has again that
$x_\mu=  x_0^- -\mu^*$, with $\mu^* =\frac{\tilde \mu \eps }{1-\pi'(x_0^-)} + \OO(\eps^2)$,
and, by \eqref{xepsilon},  this point belongs to the interval $[K^-,P^{-1}(x_\eps)]$ if
$- \tilde \mu \le  \alpha ^- (1-\pi'(x_0^-))= \Delta $. 
Therefore, for $\tilde \mu =-\Delta$  the periodic orbit enters the interval affected by the regularization and meets $\Gamma_{\mu, \eps}$. Then, at $\mu=0$, both orbits disappear. This is a saddle node bifurcation.
\end{proof}

\subsection{Application to dry friction systems in a single degree of freedom}\label{dryfriction}

Let us consider a mass $m$ attached to a spring with a constant of recovery $K$. The mass is on a moving belt with constant velocity $v_d$.

If $x$ denotes the displacement of $m$ with respect to the equilibrium position of the spring $K$, on $m$ act two forces: a force of resistance of the spring $-Kx$ (assuming the spring linear), and a friction force between the mass and the belt. 

If we start from the equilibrium position $x=0$, the mass will begin to move in stick with the belt (stick phase) at velocity $v_d$ till the recovery force of the spring $-Kx$ compensate the static friction force and produce on $m$ a damped harmonic motion (slip phase) until that, by energy dissipation, the mass will be once more in sticking with the belt, and so on.

So the equations are divided according to whether or not the relative speed between the mass and the belt, $ v_r = \dot x -v_d$, is zero in two phases: 

\begin{itemize}
\item
Stick phase ($v_r = 0$), the equations are:
$$
m \ddot x = - K x +\FF_s(x),
$$
where $\FF_s(x)=\min(|Kx|,F_s){\rm sgn}(Kx)$, is the friction static force and $F_s$ is its maximum value. 

Note that if $| Kx | <F_s$, then $\ddot x= 0$ and $\dot x= v_d$, ie, $m$ moves in sticking with the belt until the force of the spring recovery reaches $F_s$. From this moment  on, $m$ begins to oscillate on the belt. But now it enters into a state where $v_r \ne  0$ and there the frictional force depends on $v
_r$. The system is now in slip phase.

\item
Slip phase ($v_r \ne 0$),  the equations of motion are 
$$
m\ddot x = - K x + \FF_d (v_r),
$$
where $\FF_d(v_r)$, represents the dynamic friction which has opposite sign to $v_r$.
\end{itemize}

Following R.I. Leine \cite{Leine00, Leine00b}
one considers three basic models of friction related to three different types of $\FF_d(v_r)$.
\begin{itemize}
\item
\emph{Stribeck model}.
This model incorporates the experimental evidence that the force of static friction is larger than the dynamic one, although there is a continuous transition from one state to other.
\item
\emph{Coulomb model}.
This model assumes that the dynamic friction is constant and equal to the static friction.
\item
\emph{Stiction model}.
This model assumes that there is not a regular transition between static and dynamic friction, but when the spring arrives to the value of static friction, the frictional force falls instantaneously and discontinuously to a value strictly less.
Note that in this model, unlike the other two, the dynamic friction has no lateral limits, but tends to whole intervals $[F_d, F_s]$ and $[-F_s,- F_d]$, respectively.

\end{itemize}

In \cite{Leine00b}, a possible function with the characterizes the  Stribeck model, putting $v_d=m=K=1$ is formulated:
$$
\FF_d (v_r) = - (\frac{F_s-F_d}{1+\delta |v_r|}+F_d){\rm sign} (v_r), \ 0< \delta \ll 1
$$
where $v_r=\dot x -1$.

\marginpar{que es delta}

Now the stick and slip systems are:
$$
\left.
\begin{array}{rcl}
\dot x &=& y \\
\dot y &=& -  x + \min ( | x |, F_s) {\rm sgn} ( x), 
\end{array}
\right\}
y =1 \mbox (stick)
$$
and, 
$$
\left.
\begin{array}{rcl}
\dot x &=& y \\
\dot y &=& -  x -   (\frac{F_s-F_d}{1+\delta |y-1|}+F_d) {\rm sign} (y -1),
\end{array}
\right\}
y \ne 1 \mbox (slip).
$$
The slip system  can be written as a Filipov system $Z=(\X, \Y)$ with switching surface:
$$
\Sigma =\{ (x,1), x\in \RR \}
$$
$$
\X :
\left.
\begin{array}{rcl}
\dot x &=& y \\
\dot y &=& -  x -  (\frac{F_s-F_d}{1+\delta (y-1)}+F_d), 
\end{array}
\right\}
y >1 
$$
and 
$$
\Y:
\left.
\begin{array}{rcl}
\dot x &=& y \\
\dot y &=& -  x +   (\frac{F_s-F_d}{1+\delta (1-y)}+F_d) ,
\end{array}
\right\}
y < 1 
$$
The region $| x | <F_s $ in the switching surface $y=1$ is again a sliding region and  the sliding  Filippov vector field is:
$$
\dot x= 1,
$$
which coincides with the stick field.

The vector field $\X$ has a invisible fold at $(-F_s,1)$ and points toward $\Sigma$ for $x>-F_s$.

It turns out that 
$\Y$ has a repeller  focus at the  point $(\frac{F_s-F_d}{1+\delta}+F_d,0 )$ for $\delta$ small enough with eigenvalues:

$$
\Lambda_\pm =\frac{\nu}{2}  \pm \frac{i}{2}\sqrt{4-\nu^2}
$$
where $\nu=\frac{\delta (F_s-F_d)}{(1+\delta)^2}>0$, therefore this is an unstable  focus.
If we denote by $\al=F_s-F_d$ and $\beta= F_d$, we have that the function  
$$
V(x,y)=(x-\beta-\frac{\alpha}{1+\delta})^2 + y^2
$$
is strictly growing over the solutions of  $X_-$, because:
$$
\frac{d V}{dt}(x,y)=2 \alpha\delta y^2 \frac{1}{1+\delta} \frac{1}{1-\delta(y-1)}>0
$$
if  $y<\frac{1+\delta}{\delta}$. 

Note that $\Y$ has a visible tangency point at $(F_s,1)$ and its unstable pseudoseparatrix $W^u_-(F_s,1)$ intersects the switching manifold 
at a point $(x^*,1)$ between the two fold points if $\delta$ is small enough. Therefore the Stribeck model has a sliding periodic orbit:
$$
\Gamma_0= W^u_-(F_s,1) \cup \{ (x,1),\ x^*\le x \le F_s\}.
$$

We can then apply Theorem \ref{thm:po} to this system and ensure that the corresponding regularized system $Z_\eps$ has a periodic orbit
$\Gamma _\eps \to \Gamma _0$ as $\eps \to 0$ (see figure \ref{coulomb}).

If for simplicity we take $m = K = v_d = 1$, the equations of motion for the Coulomb model are:
\begin{eqnarray*} 
\ddot x &=& -  x + \min ( | x |, F_s) {\rm sign} ( x), \ v_r =0 \mbox (stick)\\
\ddot x &=& -  x -  F_s {\rm sign}v_r, \ v_r \ne 0 \mbox (slip)
\end{eqnarray*}
which give  two systems:
$$
\left.
\begin{array}{rcl}
\dot x &=& y \\
\dot y &=& -  x + \min ( | x |, F_s) {\rm sgn} ( x), 
\end{array}
\right\}
y =1 \mbox (stick)
$$
and 
$$
\left.
\begin{array}{rcl}
\dot x &=& y \\
\dot y &=& -  x -  F_s {\rm sign} (y -1),
\end{array}
\right\}
y \ne 1 (\mbox (slip).
$$
Where this last can be written as a Filipov system $Z=(\X,\Y)$:
$$
\left.
\begin{array}{rcl}
\dot x &=& y \\
\dot y &=& -  x -F_s,
\end{array}
\right\}
y \ge 1
$$
and
$$
\left.
\begin{array}{rcl}
\dot x &=& y \\
\dot y &=& -  x +  F_s,
\end{array}
\right\}
y \le 1 .
$$

We see that  the region $| x | <F_s$ in the switching surface $y=1$ is an sliding region between the two fields $\X$ and $\Y$. 
The points $(-F_s,1)$ and $(F_s,1)$ are, respectively,  invisible and visible tangency points.
We also see that in  $| x | <F_s$ the sliding  Filippov vector field is:
$$
\dot x= 1,
$$
which coincides with the stick field.
In this model, the point $(F_s,1)$ is a center surrounded by periodic orbits of the vector field $\Y$. 
Therefore, one can apply Theorem \ref{thm:centre} and we obtain, in the regularized system, a periodic orbit 
tangent to the section $y=1-\eps$ which persists in the regularized system and becomes a semi-stable periodic orbit (see figure \ref{coulomb}).

This coincidence between the stick equations and the Filipov sliding vector field does not occur in the Stiction model. 
This model has the same slip equations, and therefore gives the same non-smooth vector filed outside the switching manifold 
$y=1$, but different stick ones (see \cite{Leine00, Leine00b}). The resulting system does not follow the Filippov convention, so it is 
outside the scope of this paper. A study of different conventions and its regularizations will be the main goal of a forthcoming paper.
\begin{figure}
\includegraphics[width=13cm]{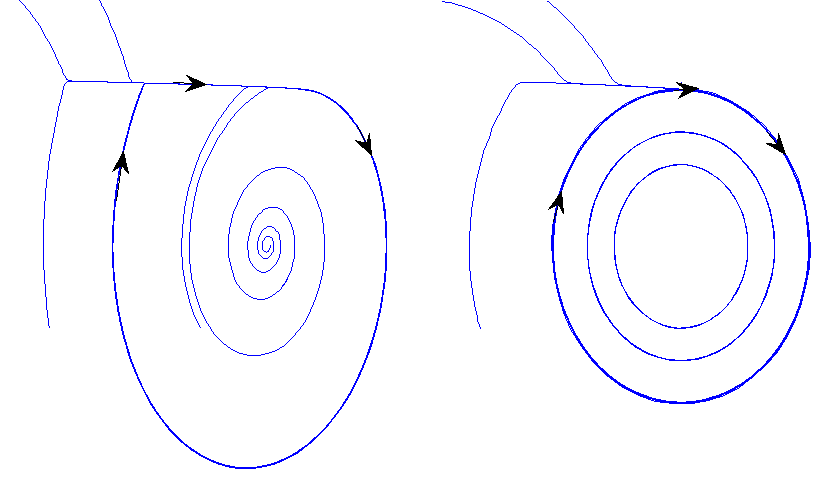}
\caption{Atracting periodic orbit (left) and semistable periodic orbit (right) corresponding to the regularization of the dry 
friction oscillator following Stribeck and Coulomb models.}
\label{coulomb}
\end{figure}

\subsection{Bifurcation of a sliding homoclinic to a saddle}

In this section we will study how the regularized vector field $Z_\mu$ behaves when the non-smooth vector field $Z$ has a 
sliding homoclinic orbit.

Let's consider the non-smooth vector field $Z$ with the same conditions  \eqref{generalform}, \eqref{cond:visiblefold} and 
\eqref{cond:visiblefold1} but now assume that the fold point $(0,0)$ has a separatrix connection with a saddle $(\x,\y)\in \VV ^+$.

Generically, this can happen in one parameter families $Z_\mu$ undergoing  a sliding homoclinic bifurcation to a saddle \cite{KutznesovRG03}.
That is, $Z_\mu$ has a saddle  $(\x,\y)$ in $\VV^+$ and, without loss of generality, we suppose independent of $\mu$.
Then, we suppose that, for $\mu <0$ both stable and unstable curves of the saddle  $W^{s,u}(\x,\y)$ intersect transversally the switching manifold $\Sigma$. 
For $\mu=0$ the unstable manifold  $W^{u}(\x,\y)$ remains transversal to $\Sigma$, but the stable $W^{s}(\x,\y)$ touches $\Sigma$ 
tangentially in a visible  fold point, that we assume at $(0,0)$, producing a pseudo-separatrix connexion between the stable manifold of the 
saddle  and the unstable pseudo-separatrix of the fold, in $\VV^+$:
$$
W^s(\x,\y)=W^u_+(0,0).
$$
For $\mu>0$ the unstable manifold of the saddle $W^{u}(\x,\y)$ remains transversal to $\Sigma$, but the stable  $W^{s}(\x,\y)$ 
moves away from $\Sigma$ inside $\VV^+$, and the unstable pseudo-separatrix of the fold does not intersect $\Sigma$ anymore. We assume, without lost of generality, as in the grazing case, that we use a coordinate system such that the fold point remains at $(0,0)$ for $\mu$ small enough.

The analysis of the regularization of this bifurcation follows closely theorems \ref{thm:po} and 
\ref{thm:sella-node}, provided we control $P^e$.        
In order to do it, suppose without  loss of generality that the eigenvalues of the saddle point $(\x,\y)$, 
$\la_2<0<\la_1$, are independent of $\mu$. 
It is well known that there exists a local change of variables, $(x,y) \to (u,v)$, in a neighborhood of  the saddle point such that, 
in the new coordinates, that we denote by
$(u,v)$, the system $\X$ reads:
\begin{equation}\label{eq:sistemasella}
\left.
\begin{array}{rcl}
\dot u &=& \la _1 u +uf(u,v)\\
\dot v &=& \la_2 v+ v g(u,v)
\end{array}
\right \}
|u| \le \delta, \quad |v|\le\delta, \quad f,g = \OO(u,v)
\end{equation}
with $\la_2<0<\la_1$.

Clearly, one has that given any $K>0$ one can choose $\delta>0$ such that $|f(u,v)| <K$ and $|g(u,v)| <K$ if $|u|<\delta$  and $|v|<\delta$.

In the  coordinates $(u,v)$ the saddle is at $(0,0)$ and the stable and unstable manifolds are given, respectively, by $u=0$, and $v=0$.
Moreover, in  these coordinates, we have the following lemma, whose proof is an straightforward application of Gronwall Lemma to system 
\eqref{eq:sistemasella}.

\begin{lemma}\label{lem:mapasella}
Let $K>0$ such that $\la_2+K<0$. 
Then, there exists $\delta= \delta (K)>0$ small enough such that:
given any solution of system \eqref{eq:sistemasella} with initial conditions $(u_0,-\delta)$, 
with $u_0\in [-\delta,0)$, there exists $T\ge 0$ such that 
$u(T)=-\delta$, moreover, 
$$
|v(T)|\le \delta ^{1+\frac{\la_2+K}{\la_1+K}} |u_0|^{-\frac{\la_2+K}{\la_1+K}}.
$$
\end{lemma}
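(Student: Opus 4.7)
The plan is to reduce the statement to a Gronwall-type estimate on the logarithmic derivatives of $u$ and $v$, using only that $f,g$ are small near the saddle and that $\lambda_2+K<0$.

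First I would fix $\delta=\delta(K)>0$ small enough so that $|f(u,v)|\le K$ and $|g(u,v)|\le K$ whenever $|u|,|v|\le\delta$; this is possible because $f,g=\OO(u,v)$. Next, I would check that the trajectory of \eqref{eq:sistemasella} starting at $(u_0,-\delta)$ with $u_0\in[-\delta,0)$ remains confined to the box $[-\delta,0)\times[-\delta,0)$ for as long as $u>-\delta$. Indeed, in that box the factor $\lambda_1+f(u,v)$ is strictly positive (taking $\delta$ smaller if needed so that $K<\lambda_1$) and $u<0$, hence $\dot u=u(\lambda_1+f)<0$, so $u$ decreases monotonically; likewise $\lambda_2+g(u,v)$ is strictly negative by hypothesis and $v<0$, so $\dot v=v(\lambda_2+g)>0$, so $v$ increases monotonically toward $0$. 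Consequently there is a finite (possibly zero) exit time $T\ge 0$ with $u(T)=-\delta$, and the bounds $|f|,|g|\le K$ hold along the entire piece of orbit $\{(u(s),v(s))\}_{s\in[0,T]}$.

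With confinement established, I would divide each equation in \eqref{eq:sistemasella} by the corresponding coordinate and integrate from $0$ to $T$, obtaining
\begin{align*}
\log\frac{\delta}{|u_0|}&=\int_0^T\bigl(\lambda_1+f(u(s),v(s))\bigr)\,ds\;\le\;(\lambda_1+K)\,T,\\
\log\frac{|v(T)|}{\delta}&=\int_0^T\bigl(\lambda_2+g(u(s),v(s))\bigr)\,ds\;\le\;(\lambda_2+K)\,T.
\end{align*}
The first inequality yields the lower bound $T\ge(\lambda_1+K)^{-1}\log(\delta/|u_0|)$. Substituting this into the second, and noting that $\lambda_2+K<0$ reverses the inequality when multiplying, I get
$$
\log\frac{|v(T)|}{\delta}\le\frac{\lambda_2+K}{\lambda_1+K}\,\log\frac{\delta}{|u_0|},
$$
and exponentiating delivers exactly the stated estimate.

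No genuine obstacle appears; the only two points requiring care are verifying the invariance of the box (so that the uniform bounds $|f|,|g|\le K$ may be used throughout), and tracking the sign flip that comes from $\lambda_2+K<0$ when passing from the lower bound on $T$ to the upper bound on $|v(T)|$. The degenerate case $u_0=-\delta$ corresponds to $T=0$ and gives $|v(T)|=\delta$, which coincides with the right hand side of the claimed bound and thus is consistent.
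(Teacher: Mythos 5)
Your proof is correct and is essentially the argument the paper has in mind: the paper dispatches this lemma as ``a straightforward application of Gronwall's Lemma'' to \eqref{eq:sistemasella}, i.e.\ exactly the comparison of the passage time and of $|u|,|v|$ against the linear rates $\lambda_1+K$ and $\lambda_2+K$ that you carry out by integrating the logarithmic derivatives. One wording slip: shrinking $\delta$ cannot make $K<\lambda_1$ (the constant $K$ is fixed in the statement); what you actually need, and can get since $f=\OO(u,v)$, is $\delta$ small enough that $|f|<\lambda_1$ on the box, which gives $\lambda_1+f>0$ while the final estimate still only uses $|f|\le K$ and $|g|\le K$.
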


Now, we can study the regularization $Z_{\mu,\eps}$ of the sliding homoclinic to a saddle bifurcation in $Z_\mu$.

To relate this situation with the grazing bifurcation in Theorem \ref{thm:sella-node}, 
we consider the points $x^\pm_0=W^{u,s}_+(0,0) \cap \Sigma ^{\pm}_{y_0}$, and we obtain that the external map behaves as:

\begin{lemma}\label{lem:gammabeta}
Denote by $x_0^u=W^u(\x,\y)\cap \Sigma_{y_0}^-$ and by   $x_0^\pm = W^{s,u}(0,0) \cap \Sigma _{y_0}^\pm$ .
Assume that $x_0^u$, $x_0^\pm$ are independent of $\mu$ and that $x_0^u<x_0^-$. Denote by 
$x_0^s=W^s(\x,\y)\cap \Sigma_{y_0}^+$ and assume $x_0^s= x_0^+ -\mu$,
with  $\mu$ small enough. 

Consider the exterior  map 
$$
P^e:D^e\times \{y_0\} \subset \Sigma_{y_0}^+\to \Sigma_{y_0}^-
$$
\begin{itemize}
\item
If $\mu <0$, one has that $x_0^+\in D^e$ and $P^e(x_0^+)=x_0^- +\gamma$ with $\gamma = \gamma (\mu)<0$.
\item
If $\mu >0$,  the exterior map is not defined in $x_0^+$.
\end{itemize}
\end{lemma}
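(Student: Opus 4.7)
The plan is to factor the exterior map $P^e$ near $x_0^+$ through a passage close to the saddle $(\x,\y)$. Near this saddle I would use the straightening coordinates that bring $\X$ to the normal form \eqref{eq:sistemasella} on a box $B=\{|u|\le\delta,\,|v|\le\delta\}$, so that inside $B$ the local stable and unstable manifolds are $\{u=0\}$ and $\{v=0\}$ respectively. Combining this local model with the regular flow of $\X$ outside $B$, I would define two smooth diffeomorphisms: an entry map $\Pi_{\inn}$ from a neighbourhood of $x_0^s$ in $\Sigma_{y_0}^+$ to the face $\{v=-\delta\}$ of $B$, satisfying $\Pi_{\inn}(x_0^s,y_0)=(0,-\delta)$; and an exit map $\Pi_{\out}$ from the face $\{u=-\delta\}$ to a neighbourhood of $x_0^u$ in $\Sigma_{y_0}^-$, satisfying $\Pi_{\out}(-\delta,0)=(x_0^u,y_0)$. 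The assumption $x_0^s=x_0^+-\mu$ together with Taylor's expansion gives $\Pi_{\inn}(x_0^+,y_0)=(a\mu+O(\mu^2),-\delta)$ for some nonzero constant $a$, whose sign is fixed by the local geometry.

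For $\mu<0$ I would arrange the sign convention so that $a\mu\in[-\delta,0)$ and apply Lemma \ref{lem:mapasella} with $K$ chosen small enough that $\la_2+K<0$. The orbit through $x_0^+$ then exits $B$ at a point $(-\delta,v_\mu)$ with
\[
|v_\mu|\le\delta^{1+(\la_2+K)/(\la_1+K)}|a\mu|^{-(\la_2+K)/(\la_1+K)}.
\]
As the exponent $-(\la_2+K)/(\la_1+K)$ is strictly positive, $v_\mu\to 0$ as $\mu\to 0^-$. Composing with $\Pi_{\out}$ gives $P^e(x_0^+)=x_0^u+O(v_\mu)$, so $P^e(x_0^+)\to x_0^u$. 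Because $x_0^u<x_0^-$ by hypothesis, $\gamma(\mu):=P^e(x_0^+)-x_0^-$ tends to $x_0^u-x_0^-<0$, and hence $\gamma(\mu)<0$ for every $\mu<0$ small enough.

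For $\mu>0$ the sign of $a\mu$ reverses, so the same orbit enters $B$ on the opposite side of $W^s(\x,\y)$ and, by a symmetric version of Lemma \ref{lem:mapasella}, exits through the face $\{u=+\delta\}$ close to the other branch of $W^u(\x,\y)$. This exiting orbit is precisely (a piece of) the perturbed unstable pseudo-separatrix $W^u_+(0,0)$, which by the standing hypothesis for $\mu>0$ does not intersect $\Sigma$ anymore. The main obstacle of the proof lies here: Lemma \ref{lem:mapasella} only controls the passage through $B$, so concluding that $x_0^+\notin D^e$ requires ruling out a global return of the perturbed orbit to $\Sigma_{y_0}^-$. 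I would secure this by continuous dependence of the flow on $\mu$ over compact time intervals together with the $\mu=0$ baseline: the $\mu=0$ orbit through $x_0^+$, namely $W^s(\x,\y)$, limits to the saddle without ever crossing $\Sigma_{y_0}^-$, so its small perturbation for $\mu>0$ stays uniformly away from $\Sigma_{y_0}^-$ until it leaves $B$, after which it shadows the opposite branch of $W^u(\x,\y)$, whose global geometry (consistent with the hypothesis that $W^u_+(0,0)$ avoids $\Sigma$ for $\mu>0$) keeps it away from $\Sigma_{y_0}^-$.
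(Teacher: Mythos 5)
Your treatment of the case $\mu<0$ is essentially the paper's: you factor $P^e$ through the passage near the saddle in the coordinates of \eqref{eq:sistemasella}, apply Lemma \ref{lem:mapasella}, and conclude $P^e(x_0^+)=x_0^u+\OO\bigl(|\mu|^{-\frac{\la_2+K}{\la_1+K}}\bigr)$, which together with $x_0^u<x_0^-$ forces $\gamma(\mu)=P^e(x_0^+)-x_0^-<0$ for $\mu<0$ small; this is exactly the estimate \eqref{eq:poincaresella} and the argument used in the paper, so that half is fine.

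The case $\mu>0$ is where your proposal has a genuine gap, and you half-acknowledge it. Two problems. First, the standing hypothesis you invoke states that for $\mu>0$ the pseudo-separatrix $W^u_+(0,0)$ does not intersect the switching manifold $\Sigma=\{y=0\}$; what you must show is that it never returns to the section $\Sigma_{y_0}^-=\{(x,y_0),\ x<0\}$, and the former does not imply the latter (the orbit could in principle cross the line $y=y_0$ at some $x<0$ without ever touching $\Sigma$), so quoting that hypothesis does not finish the argument. Second, continuous dependence on compact time intervals cannot control this passage: the time spent near the saddle is unbounded as $\mu\to0^+$, and after the orbit leaves the box it must be followed for all forward time, so the claim that it ``shadows the opposite branch of $W^u(\x,\y)$, whose global geometry keeps it away from $\Sigma_{y_0}^-$'' is an assertion of the conclusion rather than a proof. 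The paper closes this case with a one-line planar argument that is missing from your write-up: the forward orbit of $(x_0^+,y_0)$ is an orbit of the smooth planar field $\X_\mu$, hence it can never cross the invariant curve $W^s(\x,\y)$; since for $\mu>0$ one has $x_0^+>x_0^s$, the orbit starts on the side of $W^s(\x,\y)$ from which $\Sigma_{y_0}^-$ cannot be reached without such a crossing, so it never meets $\Sigma_{y_0}^-$ and $P^e$ is undefined at $x_0^+$. Replacing your continuity/shadowing paragraph by this barrier argument, which uses nothing beyond $x_0^s=x_0^+-\mu<x_0^+$ and uniqueness of solutions in the plane, completes the proof.
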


\begin{proof}
The exterior map follows the orbits which pass close to the saddle $(\x,\y)$, therefore, using 
Lemma \ref{lem:mapasella}, we have that the map 
$$
P^e:D^e \times\{y_0\}\subset \Sigma_{y_0}^+\to \Sigma_{y_0}^-
$$
verifies that if   $ x\in D^e$, then  $x<x_0^s=x_0^+-\mu$ and:
\begin{equation}\label{eq:poincaresella}
|P^e(x)-x_0^u| \le \D |x-x_0^++\mu|^{-\frac{\la_2 +K}{\la_1+K}}
\end{equation}
where $\D >0$ is a suitable constant independent of $\mu$.
In particular, if $\mu <0$ small enough, we have that $x_0^+ < x_0^s=x_0^+-\mu$, and therefore:
$$
P^e(x_0^+)-x_0^u=\OO( |\mu|^{-\frac{\la_2 +K}{\la_1+K}}).
$$
Now we have:
$$
P^e(x_0^+)-x_0^- =P^e (x_0^+)-x_0^u + x_0^u-x_0^-= x_0^u-x_0^-+\OO( |\mu|^{-\frac{\la_2 +K}{\la_1+K}}).
$$
Now, as $-\frac{\la_2 +K}{\la_1+K}>0$, if we take  $\mu$  small enough, using that $x_0^u-x_0^- <0$, one has that
$$
P^e(x_0^+)-x_0^- <0,
$$
therefore $P^e(x_0^+)=x_0^-+\gamma$, with $\gamma= \ga (\mu)<0$.

In the case $\mu >0$, one has that $x_0^+>x_0^s = x_0^+-\mu$. 
As the unstable pseudo-separatrix of the fold can not intersect the stable manifold on the saddle, it can not  intersect again the section 
$\Sigma^-_{y_0}$.
\end{proof}

Now, we can give the result about periodic orbits in the regularized system.

\begin{theorem}\label{thm:separatrix}
Let $Z_\mu = (\X _\mu , \Y _\mu)$ be a family of non-smooth vector fields that undergoes a sliding homoclinic bifurcation  
generated by a generic tangency between the stable manifold of a saddle point $(\x,\y)\in \VV^+$ of $\X$ and the switching manifold $\Sigma$, 
which occurs for $\mu =0$, while the unstable manifold of the saddle is transversal to $\Sigma$.
Assume that for $\mu <0$ both stable and unstable curves of the saddle $W^{u,s}(\x,\y)$ intersect transversally the switching manifold 
$\Sigma$ and for $\mu>0$ the unstable manifold of the saddle $W^{u}(\x,\y)$ remains transversal to $\Sigma$, but the stable $W^{s}(\x,\y)$ 
moves away from $\Sigma$ inside $\VV^+$, creating a visible fold point (of $\X$) whose  unstable pseudo-separatrix in $\VV^+$ 
does not intersect $\Sigma$ anymore.
Assume also that $\Y_\mu$ is transversal to $\Sigma$ and points towards $\Sigma$ for any $\mu$ small enough.
Consider the regularized family $Z_{\mu, \eps}$, $\eps>0$, then:
\begin{itemize}
\item
If $\mu < 0$, the non-smooth system $Z_\mu$ has an sliding periodic orbit $\Gamma_\mu$, and 
the regularized system $Z_{\mu,\eps}$ has an attracting periodic orbit $\Gamma_{\mu,\eps}$ for $\eps$ small enough uniformly in $\mu$ which approaches, when $\eps \to 0$ the sliding periodic orbit $\Gamma_\mu$.
\item
If $\mu = 0$, the system $Z_\mu$ has an sliding homoclinic  orbit $\Gamma_0$, and 
the regularized system has an attracting periodic orbit $\Gamma_{\mu,\eps}$ for $\eps$ small enough uniformly in $\mu$ which approaches, when $\eps \to 0$, the sliding homoclinic orbit $\Gamma_0$.
\item
If $\mu >0$, for $\eps$  small enough, the vector field $Z_{\mu,\eps}$ has no periodic orbits in a region close to the stable separatrix of the saddle point.
\item
If $\mu= \tilde \mu \eps$, with $0\le \tilde \mu <-\alpha ^+$, where $\alpha ^+$ is given in proposition \ref{prop:flowtangency},
the family $Z_{\tilde \mu \eps,\eps}$ has an attracting periodic orbit for $\eps$ small enough which becomes an homoclinic orbit to $(\x,\y)$ for $\tilde \mu = -\alpha ^+ + \OO(\eps)$.
\end{itemize}
\end{theorem}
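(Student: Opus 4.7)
The proof will proceed case by case, in each case applying (a slight adaptation of) Theorem \ref{thm:po} to the composition $P^e\circ P_\eps$, with the external map $P^e$ controlled by Lemma \ref{lem:gammabeta}. The central subtlety that distinguishes this from the grazing-sliding analysis of Theorem \ref{thm:sella-node} is that $P^e$ need not be defined on the whole section $\Sigma_{y_0}^+$ near $x_0^+$: orbits starting above the stable manifold $W^s(\x,\y)$ are trapped by it and never return to $\Sigma_{y_0}^-$. Thus the first thing to check in each case is whether $P_\eps(\II)$ falls inside the domain of $P^e$, namely strictly below $x_0^s=x_0^+-\mu$.

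For $\mu<0$, Lemma \ref{lem:gammabeta} gives $P^e(x_0^+)=x_0^-+\ga(\mu)$ with $\ga(\mu)<0$, and Theorem \ref{thm:po} (case $\ga<0$) applies verbatim, producing an attracting periodic orbit $\Gamma_{\mu,\eps}$ that approaches the sliding cycle $\Gamma_\mu$ as $\eps\to 0$. For $\mu=0$ the map $P^e$ is not defined at $x_0^+$ itself, but by Theorem \ref{thm:main} the image $P_\eps(\II)$ is a small interval centered at $x_0^++\al^+\eps+\OO(\eps^{2p/(2p-1)})$, and since $\al^+<0$ this interval lies strictly below $x_0^s=x_0^+$. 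The estimate \eqref{eq:poincaresella} (specialized to $\mu=0$) then gives
\begin{equation*}
|P^e(P_\eps(x))-x_0^u|=\OO\!\left(\eps^{-(\la_2+K)/(\la_1+K)}\right)\longrightarrow 0
\end{equation*}
because the exponent is strictly positive. Choosing $L^-<x_0^u$ ensures that $P^e\circ P_\eps$ maps $\II$ into $\II$, and the Lipschitz constant is again $o(1)$, so the contraction principle yields the periodic orbit $\Gamma_{0,\eps}$ approaching the sliding homoclinic $\Gamma_0$.

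For $\mu>0$ with $\mu$ bounded away from $0$, we have $x_0^s=x_0^+-\mu<x_0^++\al^+\eps$ once $\eps$ is small enough. Thus $P_\eps(\II)$ lies above the stable manifold of the saddle, so by Lemma \ref{lem:gammabeta} $P^e$ is undefined there, no return occurs, and no periodic orbit can exist in the region under consideration. For the intermediate scaling $\mu=\tilde\mu\eps$ with $0\le\tilde\mu<-\al^+$, the displacement satisfies
\begin{equation*}
P_\eps(x)-x_0^s=(\al^++\tilde\mu)\eps+\OO(\eps^{2p/(2p-1)})<0,
\end{equation*}
so $P_\eps(\II)$ still lies in the domain of $P^e$, and the contraction argument of the $\mu=0$ case goes through to give an attracting periodic orbit $\Gamma_{\tilde\mu\eps,\eps}$. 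As $\tilde\mu\to-\al^+$, the leading term $(\al^++\tilde\mu)\eps$ vanishes; at the value $\tilde\mu=-\al^++\OO(\eps)$ determined by equating $P_\eps(\II)$ with $x_0^s$ using the full asymptotic expansion, the image of $P_\eps$ lies exactly on $W^s(\x,\y)$, so the corresponding orbit is forward-asymptotic to the saddle and the periodic orbit closes up into a homoclinic loop to $(\x,\y)$.

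The hardest point is the precise location of the homoclinic bifurcation parameter in the last item: one must balance the two small scales $\eps$ (the size of the homoclinic perturbation $\mu=\tilde\mu\eps$) and $\eps^{p/(2p-1)}$ (the deviation of $P_\eps$ from the Filippov Poincar\'e map coming from Theorem \ref{thm:main}), and solve $P_\eps(x)=x_0^s$ asymptotically. Since $x_0^s-x_0^+=-\tilde\mu\eps$ is of the dominant order $\eps$ and the deviation $\eps^{p/(2p-1)}$ is a smaller correction, the implicit function theorem gives $\tilde\mu=-\al^++\OO(\eps^{p/(2p-1)-1}\eps)$ uniformly, completing the picture.
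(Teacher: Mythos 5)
Your proposal follows essentially the same route as the paper's own proof: Lemma \ref{lem:gammabeta} plus Theorem \ref{thm:po} for $\mu<0$, the estimate \eqref{eq:poincaresella} combined with Theorem \ref{thm:main} and the contraction principle for $\mu=0$ and $\mu=\tilde\mu\eps$, and the failure of $P_\eps(\II)$ to land in the domain of $P^e$ (i.e.\ below $x_0^s=x_0^+-\mu$) for $\mu>0$, with the homoclinic value located by balancing $-\tilde\mu\eps$ against $\alpha^+\eps$ plus the higher-order deviation of $P_\eps$. The only slight difference is that you push the last item a bit further quantitatively than the paper does (which only identifies $\tilde\mu=-\alpha^+$ ``in first order''), and there your error exponent should come from the $\OO(\eps^{2p/(2p-1)})$ deviation of $P_\eps$, giving a correction of order $\eps^{1/(2p-1)}$ rather than the $\eps^{p/(2p-1)}$ you wrote; this is a minor bookkeeping point that does not affect the argument.
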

\begin{figure}
\includegraphics[width=13cm]{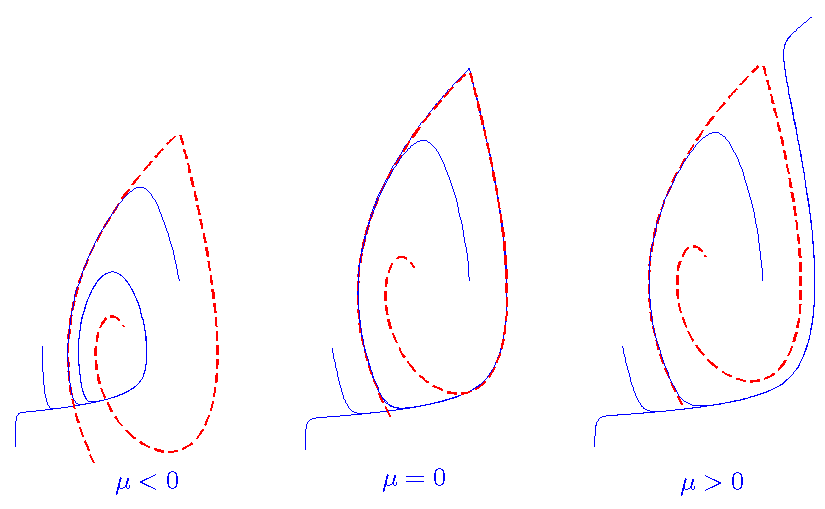}
\caption{Homoclinic bifurcation in the regularized system corresponding with the sliding homoclinic bifurcation in the Filipov system.}
\label{homoclinic}
\end{figure}

\begin{proof}
One can assume that the fold point, which exists for $\mu$ small enough, is located at $(0,0)$. 
As usual, we denote by  $(x_0^{\pm}, y_0)=W^{u,s} (0,0) \cap \Sigma _{y_0}^{\pm}$, the intersection of its stable and unstable 
pseudo-separatrices with  $\Sigma _{y_0}^\pm$ and by 
$W^{s,u} (\x,\y) \cap \Sigma _{y_0}^\pm=(x_0^{s,u}, y_0)$. We also assume that $x_0^\pm$ and $x_0^u$ are independent of 
$\mu$ with $x_0^u<x_0^-$, and that $x_0^s=x_0^+ -\mu$.

For $\mu <0$, we can apply Lemma \ref{lem:gammabeta} and we obtain that  $P^e(x_0^+)=x_0^- +\gamma$ with $\gamma <0$. 
This implies that the non-smooth vector field $Z_\mu$ has a sliding cycle $\Gamma_\mu  = W^{u}_+(0,0) \cup \{ (x,0), \ x^*\le x \le 0\}$, where 
$(x^*,0)= W^{u}_+(0,0)\cap \Sigma$.

As $\ga<0$, one can apply Theorem \ref{thm:po} and we obtain the existence of a periodic orbit $\Gamma_{\mu, \eps}$ for $Z_{\mu,\eps}$.

In the case $\mu =0$ one can not directly apply Theorem \ref{thm:po} because $x_0^+=x_0^s$ and therefore the 
exterior Poincar\'{e} map $P^e$ is not regular at $x=x_0^+$.
Then, we use the results about $P_\eps$ in Theorem \ref{thm:main}, and we have that, on one hand 
$x_0^u \in \II=[ L^-,x^-_0+\alpha ^- \eps + \mu ^- \eps ^{2\la}+O( \eps ^{1+\la})] $, 
and on the other hand 
the return map $P^e\circ P_\eps$ is defined in  the interval 
$\II$ and, if $x\in \cal I$:
$$
P^e \circ P_\eps (x) = P^e(x^+_0 +\alpha ^+ \eps + \OO(\eps ^{\frac{2p}{2p-1}}))
$$
and then, as $\alpha ^+ <0$, one has that $x^+_0 +\alpha ^+ \eps + \OO(\eps ^{\frac{2p}{2p-1}})< x_0^+=x_0^s$ and, 
applying inequality \eqref{eq:poincaresella} for $\mu =0$:
$$
|P^e(x^+_0+\alpha ^+\eps + \OO(\eps ^{\frac{2p}{2p-1}}))-x_0^u|
< \D (\alpha ^+\eps + \OO(\eps ^{\frac{2p}{2p-1}}) )^{-\frac{\la_2+K}{\la_1+K}}
\le \bar \D \eps^{-\frac{\la_2+K}{\la_1+K}}.
$$
Therefore, 
we can ensure that, as  $-\frac{\la_2+K}{\la_1+K}>0$, for any $x\in \II$:
 $$
P^e\circ P_\eps (x)= P^e(x^+_0+\alpha ^+\eps + \OO(\eps ^{\frac{2p}{2p-1}}))=x_0^u +
 \OO(  \eps^{-\frac{\la_2+K}{\la_1+K}}) \in \II
 $$ 
and therefore $P^e\circ P_\eps$ send the interval $\cal I$ to an interval $J\subset \II$ centered at the 
point $x_0^u<x_0^-$ and of size $\OO(  \eps^{-\frac{\la_2+K}{\la_1+K}})$,
and is a contraction. This gives the existence of a periodic orbit $\Gamma_{0,\eps}$
of the regularized vector field $Z_{0,\eps}$.

Let us observe that, for $\mu=0$ the non-smooth system $Z_0$ has not a periodic cycle but a homoclinic one 
$\Gamma_0= W^u_+(0,0) \cup W^u(x_s,y_s)  \cup \{ (x,0), \ x^*\le x \le 0\}$, where 
$(x^*,0)= W^{u}(x_s,y_s)\cap \Sigma$. Clearly $\Gamma_{0,\eps} \to \Gamma_{0}$ as $\eps \to 0$.
Nevertheless, it is straightforward to see that, for $\mu <0$ we also have that $P^e\circ P_\eps$ 
send the interval $\cal I$ to an interval $J\subset \II$ and is a contraction. This gives the uniformity in $\eps$ for $\mu \le 0$.

If $\mu >0$, we use again that  the interval $\II$ is sent by $P_\eps$ to an interval $\JJ$ centered at 
$x^+_0+\alpha ^+\eps $ of size $ \OO(\eps ^{\frac{2p}{2p-1}})$.
On the other hand the intersection of $W^s(\x,\y)\cap \Sigma _{y_0}^+ =(x^s_0,y_0)$, with $x_0^s =x_0^+-\mu$, therefore if 
$$
x_0^s<x^+_0+\alpha ^+\eps + \OO(\eps ^{\frac{2p}{2p-1}}) 
$$
the exterior map is not defined in this interval.

Observe that this happens if $\eps$ is small enough and 
$
-\alpha ^+\eps <\mu
$.
As $\alpha ^+ <0$, this condition is verified if $\eps$ is small enough for $\mu >0$. 
One then conclude that if $\mu >0$ there is no return of the whole interval $\II$ to itself and therefore 
the system has no periodic orbits in this neighborhood of the saddle if $\eps $ is small enough.

If $\mu =\tilde \mu \eps$, with $\tilde \mu >0$,  we use again that
$x_0^u \in \II=[ L^-,x^-_0+\alpha ^- \eps + \mu ^- \eps ^{2\la}+O( \eps ^{1+\la})]$, 
and $P_\eps (\II)$ is an interval $\JJ$ centered at $x^+_0+\alpha ^+\eps $ of size $ \OO(\eps ^{\frac{2p}{2p-1}})$.
Then, the first condition one needs to ensure that $P^e$ is defined in this interval is
$$
x^+_0+\alpha ^+\eps + \OO(\eps ^{\frac{2p}{2p-1}}) <x_0^s=x_0^+-\tilde \mu \eps 
$$ 
which is fulfilled if 
$\tilde \mu <-\alpha ^+$.
Under this condition, we have again that
$$
|P^e(x^+_0+\alpha ^+\eps + \OO(\eps ^{\frac{2p}{2p-1}}))-x_0^u|
< \D ((\alpha ^++\tilde \mu)\eps + \OO(\eps ^{\frac{2p}{2p-1}}) )^{-\frac{\la_2+K}{\la_1+K}}
\le \bar \D \eps^{-\frac{\la_2+K}{\la_1+K}}
$$
and therefore 
$$
 P^e(x^+_0+\alpha ^+\eps + \OO(\eps ^{\frac{2p}{2p-1}}))=x_0^u + \OO(  \eps^{-\frac{\la_2+K}{\la_1+K}}) \in \II
 $$ 
and consequently  
$P^e\circ P_\eps$ 
sends the interval $\II$ to an interval $J\subset\II $ centered at $x_0^u<x_0^-$ of size $ \OO(  \eps^{-\frac{\la_2+K}{\la_1+K}})$
and is a contraction. 
This gives the existence of a periodic orbit $\Gamma_{\tilde \mu \eps,\eps}$
of the regularized vector field $Z_{\tilde \mu \eps,\eps}$ for $\tilde \mu <-\alpha ^+$ and $\eps$ small enough.

We want to emphasize  that, as 
$x_0^u \in \II$ one has that $P_\eps (x_0^u)=x^+_0+\alpha ^+\eps + \OO(\eps ^{\frac{2p}{2p-1}})$. 
Therefore, for $\tilde \mu <-\al ^+$ one has that $P_\eps (x_0^u)< x_0^s = x_0^+ -\tilde \mu \eps$ and, if 
$\tilde \mu >-\al ^+$ one has that $P_\eps (x_0^u)> x_0^s = x_0^+ -\tilde \mu \eps$. Therefore the value $\tilde \mu =-\al ^+$  
corresponds, in first order,  to the value where the regularized vector field has a homoclinic orbit associated to the saddle $(\x,\y)$. 
We have then that the periodic orbits which existed for $\mu <-\al ^+ \eps$ disappear in a homoclinic orbit giving rise to the so 
called "homoclinic " bifurcation of $Z_{\mu,\eps}$ (see figure \ref{homoclinic}).
\end{proof}

\begin{remark}
Another situation where this phenomenon occurs is in the case that $\X_\mu$ is a Hamiltonian system. 
In this case, generically, the stable and unstable manifolds of $(\x,\y)$ coincide along a homoclinic orbit which surrounds a 
collection of subharmonic orbits. 
In this case,  for $\mu <0$ both stable and unstable curves of the saddle intersect transversally the switching manifold $\Sigma$ and 
therefore the homoclinic connexion disappears. 
Then for $\mu=0$ the homoclinic orbit  is tangent to $\Sigma$, producing a pseudo-separatrix connexion between the saddle and the visible fold. 
For  $\mu >0$ the homoclinic orbit is contained in $\VV^+$ and the unstable pseudoseparatrix of the visible fold, does not intersect $\Sigma$ anymore.
\end{remark}

 \begin{theorem}\label{thm:hseparatrix}
Let $Z_\mu$ be a family of non-smooth vector fields such that $\X_\mu$ is a Hamiltonian vector field and has an homoclinic orbit to a 
saddle point  $(\x,\y)\in \VV^+$ of $\X$, that undergoes a sliding homoclinic bifurcation 
generated by a generic tangency between the homoclinic orbit of the  saddle and the switching manifold $\Sigma$ which occurs for $\mu =0$.
Assume that for $\mu <0$ both stable and unstable curves of the saddle intersect transversally the switching manifold $\Sigma$ 
and for $\mu>0$ the homoclinic orbit is contained in $\VV^+$. 
Assume also that $\Y_\mu$ is transversal to $\Sigma$ and points towards $\Sigma$ for $\mu$ small enough.
Consider the regularized family $Z_{\mu, \eps}$, then:
\begin{itemize}
\item
If $\mu < 0$, 
the  system $Z_{\mu}$ has a grazing  periodic orbit $\Gamma_{\mu}$ and the regularized system $Z_{\mu, \eps}$  has a semistable 
periodic orbit $\Gamma_{\mu,\eps}$ for $\eps$ small enough uniformly in $\mu$, which approaches, when $\eps \to 0$,  
the grazing  periodic orbit $\Gamma_\mu$.
\item
If $\mu =0$, the system $Z_\mu$ has an sliding homoclinic orbit  $\Gamma_0$, and $Z_{0, \eps}$ has a 
semiestable  periodic orbit $\Gamma_{0,\eps}$ which approaches, when $\eps \to 0$,  the sliding homoclinic  orbit $\Gamma_0$.
\item
If $\mu>0$ the only periodic orbits of $Z_{\mu,\eps}$  near the stable separatrix of the saddle are the subharmonic orbits of $Z_\mu$
\item
The periodic orbit $\Gamma_{\mu,\eps} $ exists until  $\mu= \tilde \mu \eps$, with $\tilde \mu <-\alpha ^+$, 
where $\alpha ^+$ is given in proposition \ref{prop:flowtangency}. When $\tilde \mu$ approaches $-\al ^+$ this orbit 
becomes the homoclinic orbit of $\X$.
\end{itemize}
\end{theorem}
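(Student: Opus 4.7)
The plan is to follow the argument of Theorem \ref{thm:separatrix} closely, but to replace its attracting periodic orbit by the semi-stable periodic orbit produced by the comparison mechanism of Theorem \ref{thm:centre}, which is forced here by the Hamiltonian character of $\X_\mu$. The first step would be to identify $\Gamma_{\mu,\eps}$ with the unique subharmonic of $\X_\mu$ tangent to $\Sigma_\eps$ from above: for $\mu\le 0$ small and $\eps$ small, the homoclinic loop of $\X_\mu$ encloses a continuous one-parameter family of periodic orbits, and the member tangent to $\Sigma_\eps$ is well defined. Being contained in $\{y\ge \eps\}$, where $Z_{\mu,\eps}=\X_\mu$, this orbit is automatically periodic for $Z_{\mu,\eps}$, and by continuity $\Gamma_{\mu,\eps}\to\Gamma_\mu$ as $\eps\to 0$, with $\Gamma_\mu$ being the grazing subharmonic tangent to $\Sigma$ when $\mu<0$ and the saddle homoclinic $\Gamma_0$ when $\mu=0$.

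Semi-stability would then be established exactly as in the proof of Theorem \ref{thm:centre}. The subharmonics of $\X_\mu$ strictly interior to $\Gamma_{\mu,\eps}$ never touch $\Sigma_\eps$, hence remain periodic orbits of $Z_{\mu,\eps}$ and give the foliated interior. On the outer side, for $(x,y_0)\in\Sigma_{y_0}^-$ just outside the footprint of $\Gamma_{\mu,\eps}$, Proposition \ref{prop:comparacio} yields $P_\eps(x)<P^+(x)$; using that $\X_\mu$ is Hamiltonian so that $P^e\circ P^+=\mathrm{Id}$ on the corresponding interval of subharmonics, together with the orientation-reversing character of $P^e$, this gives $P^e\circ P_\eps(x)>x$, and the iterates form a monotone sequence converging to the fixed point associated to $\Gamma_{\mu,\eps}$.

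For $\mu=0$ one must be careful because $P^e$ is not \emph{a priori} defined at $x_0^+=x_0^s$. The fix is to combine the comparison above with the saddle-passage estimate \eqref{eq:poincaresella}: since $\al ^+<0$, the image $P_\eps(\II)$ lies near $x_0^++\al ^+\eps$, strictly to the left of $x_0^s$, so $P^e$ is defined there and Lemma \ref{lem:mapasella} shows that $P^e\circ P_\eps$ contracts $\II$ to an $\OO(\eps^{-(\la_2+K)/(\la_1+K)})$-neighborhood of $x_0^u$. Uniformity in $\mu\le 0$ follows because $x_0^s=x_0^+-\mu$ moves even further to the right as $\mu$ decreases, preserving the above inequality. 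For $\mu>0$ the homoclinic of $\X_\mu$ and all its enclosed subharmonics lie entirely in $\VV^+$: those sitting in $\{y>\eps\}$ remain periodic for $Z_{\mu,\eps}$, but no other closed orbit meeting the regularization strip can exist, since the unstable pseudo-separatrix of the fold does not return to $\Sigma$.

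Finally, for the scaling $\mu=\tilde\mu\eps$ with $0\le\tilde\mu<-\al ^+$, I would transfer verbatim the corresponding analysis of Theorem \ref{thm:separatrix}: $P_\eps(\II)$ lies near $x_0^++\al ^+\eps$, which stays to the left of $x_0^s=x_0^+-\tilde\mu\eps$ exactly while $\tilde\mu<-\al ^+$, so $P^e$ contracts $\II$ to a neighborhood of $x_0^u$ and a periodic orbit persists; at $\tilde\mu=-\al ^++\OO(\eps)$ the image meets $x_0^s$ and the periodic orbit becomes a homoclinic to $(\x,\y)$, producing the announced homoclinic bifurcation. The main difficulty I anticipate is verifying that the identity $P^e\circ P^+=\mathrm{Id}$ needed for semi-stability holds on a full neighborhood of the footprint of $\Gamma_{\mu,\eps}$, uniformly as $\mu\to 0^-$ and as the homoclinic of $\X_\mu$ reconfigures relative to $\Sigma$; once this is controlled, the remaining estimates transfer directly from Theorems \ref{thm:separatrix} and \ref{thm:centre}.
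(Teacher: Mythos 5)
Your proposal is correct and follows essentially the same route as the paper: identify $\Gamma_{\mu,\eps}$ with the orbit of $\X_\mu$ tangent to $\Sigma_\eps$, get semistability from the comparison argument of Theorem \ref{thm:centre} (Proposition \ref{prop:comparacio} plus $P^e\circ P^+=\mathrm{Id}$), handle $\mu=0$ and $\mu=\tilde\mu\eps$ via Theorem \ref{thm:main} together with the saddle-passage estimate \eqref{eq:poincaresella}, and rule out returns for $\mu>0$ since $P_\eps(\II)$ lands to the right of $x_0^s$. The "main difficulty" you flag is resolved in the paper exactly as you suggest, by noting that $P_\eps(\II)$ lies strictly to the left of $x_0^s$, so its image under $P^e$ falls in $[x_0^-,x_0^-+\OO(\eps^{-(\la_2+K)/(\la_1+K)})]$, inside the region foliated by subharmonics where the monotone iteration applies.
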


\begin{figure}
\begin{center}
\includegraphics[width=12cm]{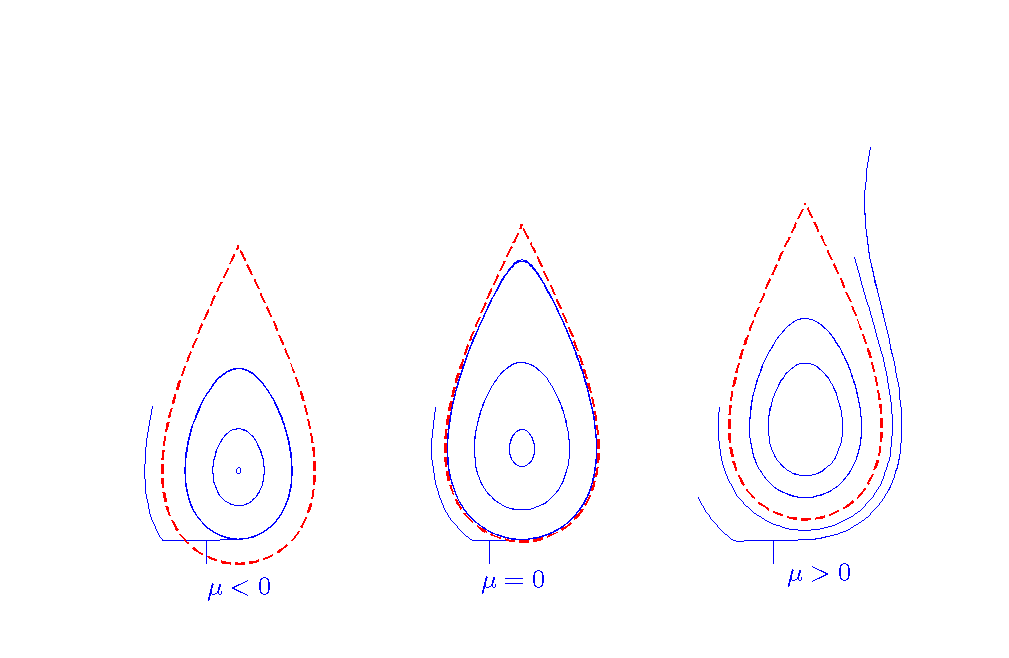}
\end{center}
\caption{Homoclinic bifurcation in the regularized system corresponding with the sliding homoclinic bifurcation in the Filipov system: Hamiltonian case.}
\label{homoclinich}
\end{figure}

\begin{proof}
As $\X$ is Hamiltonian, the homoclinic orbit $W^u(\x,\y)=W^s (\x,\y)$ surrounds a family of subharmonic periodic orbits.
As in Theorem \ref{thm:separatrix}, one can assume that the fold point, which exists for $\mu$ small enough, is located at $(0,0)$. 
Again, we denote by  $=(x_0^{\pm}, y_0)=W^{u,s} (0,0) \cap \Sigma _{y_0}^{\pm}$, the intersection of its stable and unstable pseudo-
separatrices with  $\Sigma _{y_0}^\pm$ and by 
$W^{s,u}(\x,\y) \cap \Sigma _{y_0}^\pm=(x_0^{s,u}, y_0)$. 
We also assume that $x_0^\pm$ are independent of $\mu$, 
and $x_0^s=x_0^+ -\mu$ and $x_0^u = x_0^-+\mu$.

Is $\mu <0$ small but fixed, and $\eps$ is small enough, one has that 
$x^+_0 <x_0^s $.
Therefore, in this case, the stable and unstable peudo-separatrizes of the fold coincide in a grazing periodic orbit 
$\Gamma_\mu$ whose interior is full of periodic orbits surrounding a centre.
We are then in the hypotheses of Theorem \ref{thm:centre} and we obtain, in the regularized system, that the  
periodic orbit $\Gamma_{\mu,\eps}$ of $\X_\mu$ which is tangent to the section $y=\eps$  persists in the regularized system 
becoming  a semistable periodic orbit.

When $\mu =0$ one has that $ x_0^+ =x^u_0$ and $ x_0^- =x^s_0$, therefore, we have two heteroclinic connexions between the 
fold and the saddle forming an homoclinic orbit of the saddle $(\x,\y)$ tangent to $\Sigma$.
By Theorem \ref{thm:main}, we have that the map $P_\eps$ sends the interval 
$ \II=[ L^-,x^-_0+\alpha ^- \eps + \beta ^- \eps ^{2\la}+O( \eps ^{1+\la})]$, 
to an interval $\JJ$ centered at $x^+_0+\alpha ^+\eps $ of size $ \OO(\eps ^{\frac{2p}{2p-1}})$.
As $\al ^+ <0$, one has that $x_0^+ + \alpha ^+ \eps <x_0^s $ and one can apply  inequality \eqref{eq:poincaresella}  obtaining
$$
|P^e(x^+_0+\alpha ^+\eps +\OO(\eps ^{\frac{2p}{2p-1}})) -x_0^u| \le \D 
|\alpha ^+\eps +\OO(\eps ^{\frac{2p}{2p-1}}) |^{-\frac{\la_2+K}{\la_1+K}}
$$
and then $P^e(\JJ)$ is an interval $J$ containing  the point $x_0^u=x_0^-$ and of size $|\eps |^{-\frac{\la_2+K}{\la_1+K}}$.
An important observation is that, being the interval $\JJ$ in the left of the point $x_0^s$ we know that $P^e(\JJ)=J \subset 
[x_0^-, x_0^- +\OO(|\eps |^{-\frac{\la_2+K}{\la_1+K}})]$.

Once we have this interval $J$ contained in the interior of the homoclinic loop, we can use the reasoning of 
Theorem \ref{thm:centre} to obtain that the successive iterates of the return map for any point of this interval 
form a increasing sequence which converges to the point $\bar x_\eps$, which is the intersection of the periodic 
orbit $\Gamma_{\mu,\eps}$ of $\X$ tangent to $\{y=\eps\}$ with $\Sigma_{y_0}$.

If $\mu >0$, then one has that $ x_0^+ >x^s_0=x_0^+ -\mu$.
By Theorem \ref{thm:main}, we have that the map $P_\eps$ sends the interval 
$ \II=[ L^-,x^-_0+\alpha ^- \eps + \mu ^- \eps ^{2\la}+O( \eps ^{1+\la})]$, 
to an interval $\JJ$ centered at $x^+_0+\alpha ^+\eps $ of size $ \OO(\eps ^{\frac{2p}{2p-1}})$.
But now one has that, if $\eps$ is small enough,  $x_0^+ + \alpha ^+ \eps >x_0^+-\mu=x_0^s $ and therefore the exterior map $P^e$ is 
not defined in this interval. 
As a consequence, all the orbits beginning at $\II$ do not intersect $\Sigma_{y_0}^-$ anymore and there is no possibility of existence of 
periodic orbits near the fold.

When $\mu =\tilde \mu \eps$, $0\le \tilde\mu$,  one has that the exterior map $P^e$ is still defined in the interval $\JJ$ if  
 $x_0^+ + \alpha ^+ \eps <x_0^+-\tilde \mu\eps $ and this occurs, again, while $\tilde \mu <-\alpha ^+$.
Therefore, for these range of parameters, we still have a semistable periodic orbit in the system.
Let us observe that the value $\tilde \mu = -\alpha ^+$ gives, in first order, the value of $\mu$ such that the homoclinic orbit of $\X$ 
is tangent to $\Sigma _\eps$ and, therefore, this tangent semistable periodic orbit disappears (see figure \ref{homoclinich}).
 \end{proof}

\section{Proof of Theorem \ref{thm:main}}\label{sec:proof}

In this section we will study the Poincar\'{e} map $P_\eps = \bar P\circ \PP_\eps\circ P:\Sigma_{y_0}^- \to \Sigma _{y_0}^+$.
By Proposition \ref{prop:flowtangency} we know the behavior of the maps $P$ and $\bar P$, which only depend of the vector field 
$\X$ and are the same for $Z$ and for its regularization $Z_\eps$.
That's not the case for the map $\PP_\eps$ that, for the non-smooth system, is simply:
$$
\PP_0(x) = \gamma\sqrt{\eps} + O(\eps), \quad \gamma >0,\  \mbox{if} \ x<-\ga \sqrt{\eps}
$$
where, $W^{u,s}_+(0,0)\cap \Sigma _\eps ^\pm=(\pm \gamma \sqrt{\eps} + O(\eps),\eps)
$

To study the map $\PP_\eps$ we need to control  the behavior of solutions of $Z_\eps$ near the fold point $(0,0)$. 
This is done in next sections using geometric singular perturbation theory and matching asymptotic expansions.

As we want to perform a local analysis near $(0,0)$, which is a fold-regular point for $Z$, 
following \cite{GuardiaST11}, we assume  that, locally, near $\Sigma $, the systems can be written as:
\begin{equation}\label{def:X}
\X(x,y)=\left(\begin{array}{l}
        1\\
        2x
       \end{array}\right)
\end{equation}
and
\begin{equation}\label{def:Y}
\Y(x,y)=\left(\begin{array}{l}
        0\\
        1
       \end{array}\right).
\end{equation}
Later, in section \ref{sec:generalfold}, we will show how to extend all the results in 
this section to a general vector field $Z$ near a fold-regular point.

Observe that, for the vector fields \eqref{def:X} and \eqref{def:Y}, we have explicit expressions for the maps $P$, $\bar P$:
\begin{equation}\label{eq:pp}
P(x)= -\sqrt{x^2+\eps-y_0}, \quad  \bar P(x )= \sqrt{x^2-\eps+y_0}.
\end{equation}
Therefore, in this case, one has $x^\pm _0= \pm \sqrt{y_0}$, and the constants given in Proposition \ref{prop:flowtangency} are   
$\alpha ^\pm =\mp \frac{1}{2\sqrt{y_0}}$, and $\beta ^\pm = \pm \frac{1}{2\sqrt{y_0}}$.

The regularized system \eqref{regularizedvf} $Z_\eps$ leads to the differential equations:
\begin{equation}\label{eq:regularized}
\begin{array}{rcl}\dot x &=& \frac{1}{2}(1+\varphi (\frac{y}{\eps}))\\
\dot y &=& \frac{1+2x}{2} +\frac{1}{2}\varphi (\frac{y}{\eps})(2x-1).
\end{array}
\end{equation}

\subsection{The slow invariant manifold}

Let us observe that system \eqref{eq:regularized} can be written, with the change of variable $y=\eps v$ as:
\begin{equation}\label{eq:slow}
\begin{array}{rcl}\dot x &=& \frac{1+\varphi (v)}{2}\\
\eps \dot v &=& \frac{1+2x}{2} +\frac{1}{2}\varphi (v)(2x-1)
\end{array}
\end{equation}
and, following singular perturbation methods, we will call this system slow system.
If we now perform the change of time $t= \eps \tau$ we get the so called fast system, corresponding to a vector field, depending regularly on $\eps$,
that we call $Z_\eps$:
\begin{equation}\label{eq:fast}
\begin{array}{rcl}
x' &=& \eps \frac{1+\varphi (v)}{2}\\
v' &=& \frac{1+2x}{2} +\frac{1}{2}\varphi (v)(2x-1) .
\end{array}
\end{equation}

If we put $\eps =0$ in system \eqref{eq:slow} we get:
$$
\begin{array}{rcl}
\dot x &=& \frac{1+\varphi (v)}{2}\\
0&=& 1+2x +\varphi (v)(2x-1) 
\end{array}
$$
which is a differential equation in a manifold. This manifold is usually called the \emph{slow manifold}, that, for our system, is a curve:
\begin{equation}\label{SM}
\Lambda_0=\{ (x,v), \  \varphi(v) = \frac{1+2x}{1-2x}, \ x\le 0\}.
\end{equation}
Observe that, for the functions $\varphi$ given in  \eqref{diferenciable}, 
$\Lambda _0$ only exists for negative values of $x$, because for these values one has that $-1\le \frac{1+2x}{1-2x}\le 1$.

$\Lambda _0$  is a manifold of critical points of the fast system  \eqref{eq:fast} for $\eps =0$. 
Moreover,  for $(x,v) \in \Lambda _0$:
\begin{equation}\label{matriunhm}
DZ_0 (x,v)= 
\left( 
\begin{array}{cc} 0&0\\ 1+\varphi(v)& \frac{\varphi'(v)}{2}(2x-1)
\end{array}
\right)
\end{equation}
As  $\varphi'(v)(2x-1)\le 0$ for all the points in $\Lambda_0$,  the manifold $\Lambda_0$ is a normally hyperbolic attracting manifold  
for the vector field $Z_0$.
Except in the linear case, for the functions $\varphi$ we consider, it is clear that $\varphi'(1)=0$, and therefore, as 
$(0,1) \in \Lambda _0$, we will have that $\Lambda_0$ looses its hyperbolic character when  $x\to 0$. 
In any compact subset of the region $x<0$, we can apply Fenichel theorem \cite{Fenichel79, Jones94}, which ensures the existence of a normally 
hyperbolic attracting invariant manifold $\Lambda _\eps$
for $\eps$ small enough of system \eqref{eq:fast} (and \eqref{eq:slow}):

\begin{theorem}\label{thm:fenichel} 
Consider any numbers $L, N>0$. 
Then, there exists $\eps _0>0$ and  constants $K,c >0$, such that  for $|\eps|<\eps _0$ system \eqref{eq:slow} 
has a normally hyperbolic invariant manifold $\Lambda _\eps$ such that in the region $-L\le x\le -N$  is $\eps$-close to $\Lambda _0$, 
that is, there exists a smooth function $m(x;\eps) $  such that
\begin{itemize}
\item
$
\Lambda _\eps = \{(x,v) \ -L \le x\le -N,  v=m(x;\eps)\}$ 
is a normally hyperbolic attracting locally invariant manifold of system \eqref{eq:fast}.
\item
If  $-L \le x\le -N$ we have that $|m(x;\eps)-m_0(x)|\ \le K \eps$, where $m_0(x) = \varphi ^{-1}(\frac{1+2x}{1-2x})$.
\item
There exists a neighborhood $U$ of $\Lambda_\eps$ such that for any point $z_0 \in U$ one has that there is a point $z^* \in \Lambda _\eps $ 
such that 
\marginpar{call allargar  $U$ a punts de la forma $(x_0,1)$}
$$
|\phi(t, z_0)-\phi(t, z^*)| \le K e^{-c\frac{t}{\eps}}, \quad t \ge 0
$$
where $\phi$ is the flow of system \eqref{eq:slow}.
\item
The set $\{ (x_0,1), \  -L\le x_0 \le - N\}$ is contained in $U$.
\end{itemize}
\end{theorem}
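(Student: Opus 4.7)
The plan is to verify that the compact critical arc $\Lambda_0\cap\{-L\le x\le -N\}$ satisfies the hypotheses of the classical Fenichel theorem \cite{Fenichel79,Jones94}, and then read off the four asserted properties, together with a short boundary argument for the very last one. First I would check normal hyperbolicity. From \eqref{matriunhm} the Jacobian of the fast vector field $Z_0$ at a point of $\Lambda_0$ has eigenvalues $0$ (tangent to $\Lambda_0$) and $\tfrac12\varphi'(v)(2x-1)$ (normal). On the compact arc parametrised by $x\in[-L,-N]$ the slow manifold is the graph $v=m_0(x)=\varphi^{-1}\bigl((1+2x)/(1-2x)\bigr)$, and $(1+2x)/(1-2x)$ takes values in a compact subinterval of $(-1,1)$ whose right endpoint $(1-2N)/(1+2N)$ is strictly less than $1$. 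Since $\varphi$ is strictly increasing on $(-1,1)$, and in the $\mathcal{C}^{p-1}$ case its critical points lie only at $v=\pm 1$, one obtains $\varphi'(m_0(x))\ge\kk>0$ uniformly on $[-L,-N]$; together with $2x-1\le -1-2N<0$ this forces the normal eigenvalue to be bounded above by a strictly negative constant, so the arc is a compact, smooth, normally hyperbolic attracting critical manifold of $Z_0$.

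The classical Fenichel theorem then produces, for $|\eps|<\eps_0$, a locally invariant manifold $\Lambda_\eps$, smooth jointly in $(x,\eps)$, given as a graph $v=m(x;\eps)$ over $[-L,-N]$. Smoothness in $\eps$ together with $m(x;0)=m_0(x)$ yields the estimate $|m(x;\eps)-m_0(x)|\le K\eps$. If desired, the leading correction can be identified by plugging $v=m_0(x)+\eps m_1(x)+\OO(\eps^2)$ into the invariance relation coming from \eqref{eq:slow}, but the bound itself follows from continuity in $\eps$. Fenichel further supplies a stable foliation of an ambient neighborhood $U_0$ of $\Lambda_\eps$ with uniform exponential contraction: any $z_0\in U_0$ has a base point $z^*\in\Lambda_\eps$ on its fast fibre with $|\phi(t,z_0)-\phi(t,z^*)|\le K e^{-ct/\eps}$ for $t\ge 0$, which is the exponential attraction statement. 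So far, three of the four items of the theorem are in hand.

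The main obstacle is the last item: a priori the Fenichel neighborhood $U_0$ may be much thinner than the regularization strip and need not contain the upper edge $v=1$. To cover this edge I would exploit the fast equation \eqref{eq:fast}. At $(x_0,1)$ with $x_0\in[-L,-N]$ one has $\varphi(1)=1$, so $v'=2x_0\le -2N<0$ while $x'=\OO(\eps)$; thus in fast time the orbit drops monotonically through the regularization strip with $x$ essentially frozen, entering $U_0$ after a fast time that is $\OO(1)$ (equivalently, a slow time that is $\OO(\eps)$). Concatenating this short entrance time with Fenichel's exponential estimate on $U_0$, and absorbing the transient into the constant $K$ while keeping $c$ determined by the normal eigenvalue on the compact arc, one sees that every point of the segment $\{(x_0,1):-L\le x_0\le -N\}$ satisfies an attraction bound of the required form. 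Taking $U$ to be the union of $U_0$ with the swept tube of these fast orbits then yields the neighborhood claimed in the theorem, with $K,c$ depending only on the compact data $L,N$ (and on $\varphi$).
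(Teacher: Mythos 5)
Your proof is correct and follows essentially the same route as the paper: the first three items are quoted from classical Fenichel theory (normal hyperbolicity of the compact arc being exactly the eigenvalue computation from \eqref{matriunhm}), and the last item is obtained by observing that on $v=1$ the fast field has $v'=2x_0\le -2N<0$ while $x$ is frozen up to $\OO(\eps)$, so the orbit enters the Fenichel neighborhood in $\OO(1)$ fast time, uniformly by compactness in $x_0$. The only difference is cosmetic: the paper compares with the $\eps=0$ layer system via regular perturbation and handles the $\OO(\eps)$ drift in $x$ near the endpoint $x=-N$ by first proving the claim for $x_0\in[-L,-2N]$ and then renaming $2N$ as $N$, a boundary detail your swept-tube construction should also account for.
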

\begin{proof}
The proof of this theorem can be found in \cite{Fenichel79}. So, we only need to proof the last item.
By Fenichel theorem, we know that there exists a neighborhood $U$ of the manifold $\Lambda _\eps$ where it is exponentially attracting
for the slow system \eqref{eq:slow}.
Consider now a subset $U'$ such that
$\Lambda _\eps\subset U'\subset U$. 
Fix $-3N\le x_0\le -2N$, and consider the solution $z(t;\eps)$  of system \eqref{eq:fast} with initial condition $z_0=(x_0,1)$.
It is clear that, for any $T>0$, there exists $\eps _0=\eps_0(T)$ such that for $|\eps| \le \eps _0$, one has 
$$
z(\tau;\eps)= z(\tau;0)  +O(\eps), \ 0\le \tau \le T
$$ 
where $z(\tau;0)$ is the solution of 
\begin{equation}\label{eq:slow0}
\begin{array}{rcl}
x' &=& 0\\
v' &=& \frac{1+2x}{2} +\frac{1}{2}\varphi (v)(2x-1)
\end{array}
\end{equation}
and therefore 
is of the form $z(\tau;0)= (x_0, v(\tau))$, where $v(\tau)$ is the solution of the second equation with initial condition $v(0)=1$. 
On the other hand the second component of this vector field is zero in the slow manifold $\Lambda _0$ and is negative for points $(x,v)$ 
such that $v >m_0(x)$. 
Moreover, for $(x,v)$ such that $-3N\le x\le -2N$, and $m(x;\eps) \le v \le 1$, such that $(x,v) \not \in U'$, there exists $M>0$ such that
$$
\frac{1+2x}{2} +\frac{1}{2}\varphi (v)(2x-1) \le -M
$$
and therefore, we know that there exists a time $T=T(x_0)$ such that $z(T;0) \in U\setminus U'$, and consequently, there exists $\eps _0=\eps
_0(x_0)$ such that
for $|\eps|\le \eps _0$ one has that 
$z(T;\eps)  \in U\setminus U'$. Now, as $x_0$ is in a compact set, there exists $\eps _0$  such that the result is true for any point 
$(x_0,1)$, with $-L\le x_0\le -2N$. Now, we rename $N$ as $2N$ and we obtain the result.
\end{proof}

This Theorem gives us the existence of the slow invariant manifold and its property of being attracting for points  of 
the form  $(x,1)$ for $-L\le x\le -N$, for fixed $N>0$. Later, in Theorem \ref{prop:atractiogran}, we will see that in 
fact the manifold is attracting also for points which are closer to the point $(0,1)$.

\begin{remark}\label{rem:canvivar}
By Theorem \ref{thm:fenichel} we know that, for any $N>0$,  in $-L\le x\le -N<0$ the Fenichel invariant manifold  can be described by
$$
v=m(x;\eps), \ -L\le x\le -N, \ 0\le \eps\le 0
$$ 
where $m(x;\eps)$ is a differentiable function, even for $\eps =0$. 
Moreover, the invariant character and the fact that $m(x;0)= m_0(x)$ implies that $m(x;\eps)$ has a unique expansion on $-L\le x<0$:
$$
m(x;\eps)= m_0(x)+\eps m_1(x)+ \OO(\eps ^2).
$$
Of course, this expansion is only valid on $-L\le x<-N$, that is, when $N\to 0$, the range of $\eps$-validity of the expansion tends to zero.

Nevertheless, if we fix $L>0$ small enough (but independent of $\eps$), one can guarantee that 
$
m'_0(x) >M>0
$, in fact we have that $m'_0(x)\to \infty$ as $x\to 0$. 
Therefore, we can express the slow manifold $\Lambda _0$ as
$x=n_0(v)$ for $m_0(-L) \le v\le 1$, 
and due to the unicity of the asymptotic expansion and the uniform validity in $-L\le x\le -N$, the invariant manifold $v=m(x;\eps)$  
can also be expressed, inverting $m$ as $x=n(v;\eps)$, with 
$$
n(v;\eps )= n_0(v)+\eps n_1(v) +\OO(\eps^2)
$$
where  the functions $n_i$ are uniquely determined for $m_0(-L) \le v\le 1$ by the invariance condition. 
Naturally, the asymptotic validity can only take place for $m_0(-L) \le v\le m_0(-N)$.

Then, if $m_1(x)>0$ for $-L\le x<0$, we will have:
$$
m(x;\eps) <m_0(x), \ -L \le x\le -N<0,$$ 
and 
 if $n_1(v)>0$, we will have:
$$
n_0(v) <n(v;\eps), \ m_0(-L) \le v\le m_0(-N)<0.
$$ 
\end{remark}

Once we know that the orbit of all the  points in $U$ gets exponentially close to $\Lambda_\eps$ and that  
$\Lambda_\eps$  is $\eps$-close to  $\Lambda_0$ until $(x,v)$ enter the region $x\ge -N$, now we want to 
follow the orbits when they get closer to the point $(0,1)$. 
In this region Fenichel theorem is no valid so we will  use some asymptotic expansions to get the main terms in the asymptotic series 
of the invariant manifold $\Lambda _\eps$.
Consequently, as all the orbits are exponentially small close to $\Lambda _\eps$,  these terms will be valid for the asymptotic 
expansion of any solution of the system \eqref{eq:fast}.

As we will see in next sections, the way the manifold $\Lambda _\eps$, and therefore all the orbits in $U$, 
behave near $(0,1)$ strongly depends of the regularity of function $\varphi$.

\subsection{The slow manifold close to $(0,1)$: linear case}\label{lineal}
We first consider the linear case where $\varphi$ is defined in \eqref{caslineal}. In that case, system \eqref{eq:slow} reads:
\begin{equation}\label{eq:fastlineal}
\left. \begin{array}{rcl}\dot x &=& \frac{1+v}{2}\\
\eps \dot v &=& \frac{1+2x}{2} +\frac{v}{2}(2x-1)
\end{array} \right\},\quad  \mbox{for} -1\le v \le 1, 
\end{equation}
and is given by the vector fields $\X$ given in \eqref{def:X} for $v\ge 1$ and by $\Y$ given in \eqref{def:Y} for $v\le -1$.
If one considers system \eqref{eq:fastlineal} for any $(x,v)\in \RR^2$, it has a slow manifold
$\Lambda _0= \{(x,v) \  x < \frac{1}{2}, \  v=\frac{1+2x}{1-2x}, \}$ and it is 
a normally hyperbolic attracting invariant manifold for $x\le N$, if we fix $N<\frac{1}{2}$. 
Therefore, we can apply Fenichel theorem for $0<N<\frac{1}{2}$
and we get a normally hyperbolic invariant manifold $\Lambda_\eps$ for $\eps $ small enough which is given by
$v=m(x;\eps)$ with the function $m$ verifying:
\begin{equation}\label{linearm}
1+2x + m(x;\eps)(2x-1)= \eps (1+m(x;\eps)) m'(x;\eps),
\end{equation}
and the function $m$ is given, up to order $O(\eps^2)$, by:
\begin{equation}
\label{eq:expansiolineal}
m(x;\eps)=m_0(x)+\eps m_1(x)+O(\eps^2)
\end{equation}
with
$$
m_0(x)=\frac{1+2x}{1-2x}, \quad m_1(x)=\frac{1+m_0(x)}{2x-1}m_0'(x)=-\frac{8}{(1-2x)^4}.
$$
Of course, the manifold $\Lambda _\eps$ is the invariant manifold of our regularized system \eqref{eq:fastlineal} until it reaches $v=1$.

To proof that the invariant manifold $\Lambda _\eps$ is attracting for points closer to the fold, we need some extra information of it. 
This is done in next proposition.
\begin{proposition}\label{varietatconfinadalineal}
There exists $K>0$ and $\varepsilon_0>0$, such that, if $0<\varepsilon<\varepsilon _0$ 
the invariant manifold $v=m(x;\eps)$ verifies, for $-L\le x\le \frac{1}{4}$:
\be\label{eq:varietatconfinadalineal}
m_0(x)-\eps K\le m(x;\eps)\le m_0(x)
\ee
\end{proposition}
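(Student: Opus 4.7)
The plan is to exhibit an $O(\eps)$-wide strip around the singular slow manifold $v=m_0(x)$ that is forward invariant for the slow-fast flow of \eqref{eq:fastlineal}, and then to show that the Fenichel graph from Theorem \ref{thm:fenichel} enters this strip at $x=-N$ and is therefore trapped inside it all the way up to $x=\tfrac14$. As a first step I would rewrite the invariance equation \eqref{linearm} in the form
\[
 \eps(1+m)\,m' \;=\; (1-2x)\bigl(m_0(x)-m\bigr),
\]
which makes explicit that $m_0$ is the root of the right-hand side and that the ODE drives $m$ towards $m_0$ whenever $x<\tfrac12$. Along the upper candidate edge $v=m_0(x)$ the second equation of \eqref{eq:fastlineal} forces $\dot v=0$, while $\dot x=\tfrac12(1+m_0)>0$, hence
\[
 \frac{d}{dt}\bigl(v-m_0(x)\bigr)\Big|_{v=m_0(x)} \;=\; -m_0'(x)\dot x \;=\; -\frac{4}{(1-2x)^3} \;<\;0
\]
on $[-L,\tfrac14]$. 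A standard first-crossing argument then yields positive invariance of the region $\{v\le m_0(x)\}$, which gives the upper bound in \eqref{eq:varietatconfinadalineal}.

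For the lower edge $v=m_0(x)-K\eps$ I would use the identity $1+2x+m_0(2x-1)=0$ to compute, at this curve, $\eps\dot v=\tfrac12 K\eps(1-2x)$, i.e.\ $\dot v=\tfrac12 K(1-2x)$, together with $\dot x=\tfrac12(1+m_0-K\eps)$. The flow strictly enters $\{v\ge m_0(x)-K\eps\}$ iff $\dot v>m_0'(x)\dot x$ on the boundary, and, using $1+m_0=\tfrac{2}{1-2x}$, this reduces after short simplification to
\[
 K\bigl[(1-2x)^4+4\eps(1-2x)\bigr] \;>\; 8.
\]
Since $1-2x\ge\tfrac12$ throughout $[-L,\tfrac14]$, the worst case is at the right endpoint $x=\tfrac14$ where $(1-2x)^4=\tfrac{1}{16}$, so any fixed $K>128$ works uniformly in $x$ once $\eps$ is taken sufficiently small. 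Hence $\{v\ge m_0(x)-K\eps\}$ is forward invariant as well.

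To conclude, I would fix any small $N>0$ and invoke Theorem \ref{thm:fenichel} together with the explicit expansion \eqref{eq:expansiolineal} to obtain $m(-N;\eps)=m_0(-N)-8\eps/(1+2N)^4+O(\eps^2)$, which lies strictly inside $[m_0(-N)-K\eps,\,m_0(-N)]$ once $K$ is enlarged by a bounded amount and $\eps$ is small. The two forward-invariance statements pin the continuation of $\Lambda_\eps$ inside the strip, and since $\dot x=\tfrac12(1+v)\ge\tfrac12(1+m_0(-L))-O(\eps)>0$ there, the orbit extends monotonically in $x$ and can be written as a graph $v=m(x;\eps)$ satisfying \eqref{eq:varietatconfinadalineal} on the whole interval $[-L,\tfrac14]$. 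The only delicate point in this plan is the sign verification on the lower edge: the inequality is tightest at $x=\tfrac14$ and dictates both the admissible value of $K$ and the restriction to the interval stopping at $\tfrac14$ rather than a point closer to $\tfrac12$.
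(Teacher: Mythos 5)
Your proposal is correct and follows essentially the same route as the paper's proof: an isolating strip $\{\,m_0(x)-K\eps\le v\le m_0(x)\,\}$ over $[-L,\tfrac14]$, transversality of the flow on its upper, lower and left edges (so orbits can only leave through $x=\tfrac14$), and the expansion \eqref{eq:expansiolineal} with $m_1<0$ to show the Fenichel graph enters the strip and is therefore trapped, giving \eqref{eq:varietatconfinadalineal}. The only substantive difference is quantitative: your lower-edge computation, which reduces to $K\bigl[(1-2x)^4+4\eps(1-2x)\bigr]>8$ and hence forces a large constant (any $K>128$ works, with the binding case at $x=\tfrac14$), is the correct transversality condition, whereas the paper's intermediate algebra on that edge claims the inequality holds for every $K>0$; this does not alter the structure of the argument, since $K$ must in any case be taken large enough for $\Lambda_\eps$ to enter the strip at the left edge.
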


\proof
To proof this proposition we will see that any orbit of system \eqref{eq:fastlineal} that enters the set:
\be\label{blocconfinadalineal}
\tilde B=\{ (x,v), \ -L\le x\le \frac{1}{4}, \ m_0(x)-\eps K\le v \le m_0(x)
\}
\ee
leaves in through  the border given by $x=\frac{1}{4}$.
Therefore, one needs to check that the flow points  inwards in the other three  borders.
In the  upper border  
$\tilde B^+=\{ (x,v), \ -L\le x\le \frac{1}{4}, \ v = m_0(x)\}$, 
the vector field 
in \eqref{eq:fastlineal} is of the form $((1+m_0(x))/2,0)$.
As  $1+m_0(x)>0$ 
the flow  points inward $\tilde B$ along this border.
So, we need to check the border 
$$
\tilde B^-=\{ (x,v),-L\le x\le \frac{1}{4}, \  v=m_0(x)-\eps K\},
$$ 
whose normal exterior vector is
$n=(m_0'(x),-1)$, and it is  enough to see that
$$
<n,X> _{|\tilde B ^-}<0
$$
for $2X=(\eps (1+m_0(x)-K\eps), 1+2x+(m_0(x)-K\eps)(2x-1))$,
which becomes:
\begin{eqnarray*}
&&m'_0(x)\eps [1+m_0(x)-K\eps]-[1+2x+(m_0(x)-K\eps)(2x-1)] \\
&=& 
\eps \left( -\frac{16}{(1-2x)^5}+ K(2x-1)+\frac{4K\eps}{(1-2x)^4} \right)\le \eps (-\frac{K}{2} + 8K\eps)<0
\end{eqnarray*} 
and this last term is negative for any finite $K>0$, if we take $\eps$ small enough, therefore
$$
<n,X>_{|\tilde B^{-}} \le -\eps (\frac{K}{2}-1) \ll 0.
$$
In the  left border given by $x=-L$ 
the vector field in \eqref{eq:fastlineal} has $\dot x>0$, 
therefore the flow  points inward $\tilde B$ along this border.

Now, we just need to see that, for $x=-L$, the manifold $\Lambda _\eps$ enters $\tilde B$. But this is an easy consequence of 
the expansion \eqref{eq:expansiolineal} and the fact that $m_1(-L,\eps) <0$ and  bounded for $L\to \infty$.
\endproof

From this Proposition \ref{varietatconfinadalineal} we have that the manifold $\Lambda _\eps$ will leave the regularized zone at 
a point $(x_1,\eps)$, with  $1= m_0(x_1)+\eps m_1(x_1) + O(\eps ^2)$, which, using \eqref{eq:expansiolineal}, gives
$x_1= 2\eps + O(\eps ^2)$. 

The same will happen to all the points whose orbits get exponentially closer to $\Lambda_\eps$.
Next proposition shows that this happens to all the solutions with initial conditions at points 
$(x_0,1)$, if $-L\le x_0\le -\eps^\la$, and $\la <1$.
Let's introduce the equations for the orbits of system \eqref{eq:fastlineal}:
\begin{equation}\label{eq:orbitslineal}
\eps \frac{d v}{dx}=\frac{1+2x+v(2x-1) }{1+v}
\end{equation}
and we have the following:
\begin{proposition}\label{prop:atractiogranlineal}
Fix $0<\la<1$ and take any  point $(x_0,1)$, with  $-L\le x_0\le -\eps^\la$.
Then, the orbit of system \eqref{eq:orbitslineal} with initial condition $v(x_0)= 1$ 
stays exponentially close to the invariant manifold $v=m(x;\eps)$ in the region $x\ge 0$.
\end{proposition}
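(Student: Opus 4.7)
The plan is to derive a clean linear ODE for the vertical distance $w(x) := v(x) - m(x;\eps)$ between the orbit and the invariant manifold, and then estimate it by explicit integration. Substituting $v = w + m$ in \eqref{eq:orbitslineal} and using the invariance equation \eqref{linearm} to eliminate the constant part, a direct algebraic simplification — the key identity being $(2x-1)(1+m) - [1+2x+m(2x-1)] = -2$ — collapses the difference quotient to
$$
\frac{dw}{dx} \;=\; \frac{-2\,w}{\eps\,(1+v)(1+m)}.
$$
This is linear in $w$ with a coefficient depending on the orbit, and in particular $w$ keeps its sign along the trajectory.

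I would then set up the initial data. Starting from $v(x_0)=1$ with $x_0=-\eps^{\la}$, Proposition \ref{varietatconfinadalineal} and the expansion \eqref{eq:expansiolineal} give
$$
w(x_0) \;=\; 1 - m(x_0;\eps) \;=\; \frac{-4 x_0}{1-2 x_0} + \OO(\eps) \;=\; \OO(\eps^{\la}) \;>\;0.
$$
Since $w(x_0)>0$ and the coefficient in the ODE is strictly negative as long as $1+v>0$ and $1+m>0$, $w$ stays positive and strictly decreases. On $x\in[x_0,\tfrac14]$ Proposition \ref{varietatconfinadalineal} controls $m(x;\eps)$ between $m_0(x)-K\eps$ and $m_0(x)$, and monotonicity then forces $v(x)\in[m(x;\eps),1]$; consequently $1+v$ and $1+m$ remain uniformly bounded away from $0$ and from above on that range.

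The ODE integrates explicitly to
$$
w(x) \;=\; w(x_0)\, \exp\!\left(-\frac{2}{\eps}\int_{x_0}^{x}\frac{ds}{(1+v(s))(1+m(s;\eps))}\right).
$$
On $[x_0,0]$ one has $m(s;\eps)\le m_0(s)\le 1$ and $v(s)\le 1$, so $(1+v)(1+m)\le 4$ and the integrand is bounded below by $1/4$. Hence at $x=0$,
$$
|w(0)|\;\le\; C\,\eps^{\la}\,\exp\!\left(-\tfrac{1}{2}\,\eps^{\la-1}\right),
$$
which is exponentially small in $\eps$ because $\la<1$ forces $\eps^{\la-1}\to\infty$. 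For $x\in[0,\tfrac14]$ the same ODE keeps $|w|$ non-increasing (the integrand remains positive and bounded), so the exponential closeness persists throughout the region where $\Lambda_\eps$ is defined.

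The main obstacle is the algebraic cancellation that produces the clean factor $-2$ in the numerator of the linearised equation: once this cancellation is in hand, the rest is a direct Gronwall-type integration with elementary bounds. The reason Fenichel's theorem is not directly applicable here is that the initial point $(x_0,1)$ lies outside the exponentially attracting neighbourhood $U$ provided by Theorem \ref{thm:fenichel}, whose final item only covers initial $x$-coordinates up to a fixed negative constant $-N$, whereas here $x_0=-\eps^{\la}\to 0$.
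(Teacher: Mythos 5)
Your argument is correct and essentially the same as the paper's: the same change of variable $w=v-m(x;\eps)$ produces the same linear equation (your explicit coefficient $-2/\bigl[\eps(1+v)(1+m)\bigr]$ is exactly the paper's $-g(x;\eps)/\eps$ once $\eps m'$ is eliminated via the invariance equation \eqref{linearm}), and the same bound $(1+v)(1+m)\le 4$ for $x\le 0$ gives the decay factor $e^{-(x-x_0)/(2\eps)}$ and hence exponential smallness since $x-x_0\ge \eps^{\la}$ with $\la<1$. The only cosmetic difference is that you work at the endpoint $x_0=-\eps^{\la}$, whereas the statement allows any $x_0\in[-L,-\eps^{\la}]$; the identical estimate applies there with $w(x_0)$ merely bounded, as in the paper.
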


\proof
To proof this proposition,  we perform the change of variables 
$ w=v-m(x;\eps)$ in equation \eqref{eq:orbitslineal} obtaining:
\be \label{eq:orbitswlineal}
\eps \frac{d \, w}{d\, x} = -g(x,\eps)w
\ee
where $g(x;\eps)$ is the positive function:
\[
g(x;\eps)=     \frac{-2x+1+\eps m'(x;\eps)}{1+m(x;\eps)+w(x;\eps)}.
\]

Note that we already know the existence of the solution $w(x;\eps)$ for $x\le 0$, in fact we know it verifies the bound
\[
0\le w(x;\eps)\le 1-m(x;\eps).
\]
For this reason we use the notation $g(x;\eps)$ even if this function depends on $w(x;\eps)$.

Clearly, the solution of \eqref{eq:orbitswlineal} with initial condition $w(x_0) = 1-m(x_0;\eps)$ can be written as:
$$
w(x)= e^{-\frac{1}{\eps}\int _{x_0}^{x} g(s;\eps)ds}w(x_0).
$$
Using that for $x\le 0$ we have that
$
g(x;\eps )\ge \frac{1}{2}
$
we can bound $w(x)$:
\begin{eqnarray*}
|w(x;\eps)|&\le & | w(x_0)|e^{- 
\frac{1}{2\eps}(x-x_0)},
\end{eqnarray*}
therefore if $x_0 \le -\eps ^{\la}$ with $\la <1$, any solution gets exponentially closer to the invariant manifold $v=m(x;\eps)$ for $x\ge 0$.

\endproof

Let us observe that, from proposition \ref{prop:atractiogranlineal} and the fact that the Fenichel manifold reaches $v=1$ for $x= 2\eps +\OO(\eps ^2)$, 
we know that any solution of the system arrives to $v=1$ exponentially close to it, therefore it also cuts $v=1$ at $x= 2\eps +\OO(\eps ^2)$.

\subsubsection{Asymptotics for the Poincar\'{e} map $P_\eps$}

After Theorem \ref{thm:fenichel} and propositions \ref{varietatconfinadalineal} and 
\ref{prop:atractiogranlineal}, we can conclude that the Poincar\'{e} map $\PP_\eps$ is defined in the set 
$[-L, -\eps ^\la]\times \{\eps\}$. Moreover
\begin{equation}\label{ppepsilonlineal}
\forall x \in [-L,-\eps ^\la], \quad \PP_\eps (x) = 2\eps+ \OO(\eps^2).
\end{equation}
Fix $1/2 <\lambda <1$.
Taking into account that,  by \eqref{eq:ppg}
$$
P^{-1}(-\eps ^{\la}) = x^-_0+\alpha ^- \eps + \beta ^- \eps ^{2\la}+\OO( \eps ^{1+\la})
$$ 
we have that 
$$
P([L^-,x^-_0+\alpha ^- \eps + \beta ^- \eps ^{2\la}+O( \eps ^{1+\la})]) \subset [-L, -\eps ^\la]
$$ for a suitable constant $L^-$.

On the other hand we know that the map $\bar P$ is given by formulas \eqref{eq:ppg}.

Therefore we conclude that the map $P_\eps = \bar P\circ \PP_{\eps} \circ  P$
\begin{equation}\label{pepsilonlineal}
\begin{array}{rcl}
P_\eps: [ L^-,x^-_0+\alpha ^- \eps + \beta ^- \eps ^{2\la}+O( \eps ^{1+\la})] 
\times \{y=y_0\} &\to& \Sigma ^+_{y_0} \\
(x,1) & \mapsto &(P_\eps(x), 1)  
\end{array}
\end{equation}
is given by
$$
P_\eps(x)= \bar P(2\eps + O(\eps^2)) =x^+ _0+\alpha ^+ \eps + \OO(\eps ^{2}).
$$ 
Therefore, all the points in the set  $\II=[-L^-, x^-_0+\alpha ^- \eps + \beta ^- \eps ^{2\la}+O( \eps ^{1+\la})] \times \{y_0\}$, $0<\la<1$,
are send by $P_\eps$ to a set 
$\JJ\times \{y_0\}$ and the interval $\JJ$  has, at most, size $\eps^{2}$ and it is centered at the point 
$x^+ _0+\alpha ^+ \eps $. 
Consequently, the Lipchitz constant of $P_\eps$ is, at most, $O(\eps ^{2})$.

\subsection{The slow manifold close to $(0,1)$: smooth $\CC^1$ case}\label{smct01}
\subsubsection{Extending the outer domain}

When the regularizing function $\varphi$ is $\CC^{p-1}$, with $p\ge 2$,  the slow manifold $\Lambda _0$ given in \eqref{SM}, bends near $(0,1)$:
$$
m_0(0)=1, \quad m_0'(x) \to \infty \mbox{ as } x\to 0^-.
$$
As a consequence, the Fenichel manifold can not be expressed as a graph over the $x$ variable when $x$ is near $0$.
In scope of Remark \ref{rem:canvivar}, it is natural then to look for the Fenichel manifold and also for all the orbits of system 
\eqref{eq:fast} as graphs over the $v$ variable.
Then, we consider the equation for the orbits of system \eqref{eq:fast} as:
\begin{equation}\label{eq:orbitsc1}
\frac{dx }{dv}= \eps \frac{1+\varphi(v)}{1+2x+\varphi(v)(2x-1)}.
\end{equation}
To study the behavior of the orbits close to $(x,v)=(0,1)$ we will look for the formal expansion of the Fenichel manifold as 
$$
x = n(v;\eps)= n_0(v)+\eps n_1(v)+ \cdots + \OO(\eps ^n)
$$
where now the slow manifold $\Lambda _0$ is written as 
$$
\Lambda _0=\{ (x,v), \ x=n_0(v) , \ v\le 1\}.
$$
and $n_0(v) = \frac{1}{2}\frac{\varphi(v)-1}{\varphi(v)+1}$.
As  the function $n(v;\eps)$ is a solution of the  equation \eqref{eq:orbitsc1}, it verifies:
\begin{equation}\label{eq:manifold}
(1+2n+\varphi(v)(2n-1))n'=\eps(1+\varphi(v))
\end{equation}
where $'=\frac{d}{d\, v}$.
Solving this invariance equation for $n$ formally one obtains:
\begin{eqnarray}
n_0(v) &=& \frac{1}{2}\frac{\varphi(v)-1}{\varphi(v)+1},\label{expansio1}\\
n_1(v)&=&\frac{1}{2}\frac{1}{n_0'(v)},\label{expansio2}\\
n_2(v)&=&-2 n_1'(v)n_1^2(v)=\frac{1}{2}\frac{n_0''(v)}{(n_0'(v))^2},\nonumber \\
n_3(v)&=&\frac{n_2'(v) n_1^3(v)-n_2^2(v)}{n_1(v)}, \dots  \nonumber\\
\end{eqnarray}
It will be enough for our purposes to keep the two first terms in this expansion.
Looking at the behavior of these functions near $v=1$, one can see that, by \eqref{eq:fipropdeu}, this behavior depends on the value $p$.

From now on in this section we will deal with the $\CC ^1$ case, which corresponds to  $p=2$:
\begin{eqnarray}
n_0(v) &=& \frac{\varphi''(1)}{8}(v-1)^2 + \OO(v-1)^3 \label{no}\\
n_1(v)&=&\frac{2}{\varphi ''(1)} \frac{1}{(v-1)} +\OO(1)\label{n1}\\
n_2(v)&=& \OO(\frac{1}{(v-1)^4})\nonumber \\
n_3(v)&=&\OO(\frac{1}{(v-1)^7})\nonumber 
\end{eqnarray}
etc.,
were we  use the notation
\begin{equation}
\varphi''(1) := \lim _{v\to 1^-}\varphi''(v).
\end{equation}
Therefore, if it exists a solution  $n(v;\eps)$ with asymptotic expansion close to $v=1$, it will behave as:
$$
n(v;\eps) =  \frac{\varphi''(1)}{8}(v-1)^2 (1+\OO(v-1)) +  \frac{2}{\varphi''(1)}\frac{\eps }{v-1} (1+\OO(v-1))+ O(\frac{\eps ^2}{(v-1)^4})+\dots
$$
and this asymptotic expansion fails for
$$
(v-1)^3 =O(\eps),
$$
which indicates that the invariant manifold should remain close to $x=n_0(v)$ until 
$v=1-O(\eps^\frac{1}{3})$.
Next proposition gives rigorously this behavior:

\begin{proposition}\label{blocouter}
Take any $0<\lambda_1<\frac{1}{3}$.
Then, there exists  $M>0$ big enough,  $\delta= \delta (M)>0$ small enough,  and 
$\eps_0= \eps_0(M,\delta)>0$ such that, for $0<\eps\ \le \eps _0$, any solution of system \eqref{eq:fast} which enters the set
$$
\mathbf{B} =   \left\{ (x,v), \ -\delta <v-1<-\eps^{\lambda_1} , \ n_0(v) \le x \le n_0(v)+ \frac{M \eps }{|v-1|}\right\}
$$
leaves it through the boundary $v=1-\eps ^{\la_1}$
\end{proposition}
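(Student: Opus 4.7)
The proof is an isolating-block argument: show that the flow of \eqref{eq:fast} points strictly inward across every boundary of $\mathbf{B}$ except the top face $\{v=1-\eps^{\lambda_1}\}$, so every orbit entering $\mathbf{B}$ must leave through that remaining boundary. Throughout, orbits with $x > n_0(v)$ will be parametrized by $v$ using \eqref{eq:orbitsc1}. The algebraic identity $1+2x+\varphi(v)(2x-1) = 2(1+\varphi(v))(x-n_0(v))$, which follows directly from the definition \eqref{expansio1} of $n_0$, yields the simplified form
$$
\left.\frac{dx}{dv}\right|_{\mathrm{orbit}} \;=\; \frac{\eps}{2\,(x-n_0(v))}, \qquad x>n_0(v).
$$

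Two of the three non-exit boundary checks are immediate. On the left face $\{x=n_0(v)\}$, the vector field of \eqref{eq:fast} equals $(\eps(1+\varphi(v))/2,\,0)$, so the flow is horizontal and strictly positive in $x$, i.e.\ transversal into $\mathbf{B}$. On the bottom face $\{v=1-\delta\}$ one has $\dot v\ge 0$ throughout $\mathbf{B}$ (with equality only on $\Lambda_0$, where the previous argument applies), so this face is an entry boundary and no escape is possible through it. Only the upper face
$$
\Gamma_M:\ x\;=\;n_0(v)-\frac{M\eps}{v-1}
$$
requires genuine work.

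On $\Gamma_M$ the orbit slope equals $\frac{\eps}{2(x-n_0(v))} = \frac{1-v}{2M}$, whereas the slope of $\Gamma_M$ itself is $n_0'(v)+\frac{M\eps}{(v-1)^2}$. Writing $A = -\varphi''(1)$, which is strictly positive since $\varphi\in\CC^1$, $\varphi\equiv 1$ on $[1,\infty)$ and $\varphi<1$ on a left neighborhood of $1$ by \eqref{eq:fipropdeu}, the expansion \eqref{no} gives $n_0'(v)=\frac{A}{4}(1-v)+\OO((1-v)^2)$. The inward-pointing condition $\left.\frac{dx}{dv}\right|_{\mathrm{orbit}} \le \left.\frac{dx}{dv}\right|_{\Gamma_M}$ therefore reduces to
$$
(1-v)\!\left(\frac{1}{2M}-\frac{A}{4}\right) \;\le\; \OO((1-v)^2) + \frac{M\eps}{(v-1)^2}.
$$
Fix $M>4/A$, so that $\frac{1}{2M}-\frac{A}{4}\le -\frac{A}{8}$ is a strictly negative constant independent of $\eps$; then choose $\delta=\delta(M)>0$ small enough that the $\OO((1-v)^2)$ remainder is bounded in absolute value by $\frac{A}{16}(1-v)$ on $v\in(1-\delta,1)$. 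The inequality then holds with the positive term $M\eps/(v-1)^2$ to spare.

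With all three checks in place, $\mathbf{B}$ is positively invariant except across its top face, and $\dot v$ is bounded below by a positive constant on any compact subset of $\mathbf{B}\setminus\Lambda_0$; hence every orbit entering $\mathbf{B}$ reaches $v=1-\eps^{\lambda_1}$ in finite time and exits there. The only delicate step is the slope comparison on $\Gamma_M$, which boils down to a sign condition on $\frac{1}{2M}-\frac{A}{4}$ combined with smallness of $\delta$ to absorb the Taylor remainder from \eqref{no}. The restriction $\lambda_1<\frac{1}{3}$ plays no role in this flow analysis; it will be needed elsewhere (together with \eqref{expansio2}) to match $\mathbf{B}$ with the Fenichel manifold of Theorem \ref{thm:fenichel} coming from the outer region.
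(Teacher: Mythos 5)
Your proof is correct and follows essentially the same isolating-block strategy as the paper: you show the flow is transversal inward on the left, bottom and outer boundaries of $\mathbf{B}$, forcing exit through $v=1-\eps^{\lambda_1}$, and your slope comparison on $\Gamma_M$ is exactly the paper's condition $\langle Z_\eps, n^*\rangle<0$ rewritten through the factorization $1+2x+\varphi(v)(2x-1)=2(1+\varphi(v))\bigl(x-n_0(v)\bigr)$. The only (harmless) difference is that you retain the favorable sign of the term $M\eps/(1-v)^2$ instead of bounding its modulus by $M\eps^{1-3\lambda_1}$ as the paper does, which is why $\lambda_1<\tfrac{1}{3}$ is not needed in your boundary check, exactly as you observe.
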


\proof
To proof this proposition we will see that the vector field points inwards in $3$ of the boundaries of $\bf{B}$.

To see that the flow enters through
$
{\bf{B}}^+ = \left\{ (x,v), \ -\delta <v-1<-\eps^{\lambda_1} , \  x=n_0(v)+ \frac{M \eps }{1-v} \right\}
$
we consider the exterior normal vector to it:
$$
n^*=\left(1,-n'_0(v) -\frac{ M \eps }{(1-v)^2}\right)
$$
and we will proof that $<Z_\eps, n^*>_{|{\bf{B}}^+} <0$.
Computing this scalar product, using the definition of $n_0$ in \eqref{expansio1},  and the fact that 
$1+\varphi (v)\ge 0$, we get the equivalent inequality: 
$$
1+  \frac{2 M}{v-1 }\left[n_0'+\frac{M\eps}{(v-1)^2}\right]<0
$$
which gives:
\begin{equation}\label{desigualtatbox}
1+  \frac{ 2M n'_0}{v-1}+\frac{2M^2\eps}{(v-1)^3}<0 .
\end{equation}

Now we need to check that, taking $M$ big enough and $\delta$ small enough, there exists $\eps_0=\eps_0(M,\delta)$, such that for 
$0<\eps\le \eps _0$, 
this inequality holds if 
$-\delta \le v-1\le -\eps ^{\lambda_1}$ 
for $0<\lambda_1<1/3$.

In $\bf B^+$, using the local behavior of $n'_0(v) =\frac{\varphi''(1)}{4}(v-1)+\OO(v-1)^2$, one has that there exists a constant 
$C$ independent of $\delta$ and $M$ such that:
\begin{eqnarray*}
\left| \frac{ n_0'(v)}{v-1} -\frac{\varphi''(1)}{4}\right| \le C \delta .
\end{eqnarray*}

In ${\bf B}$, one has that $\frac{\eps}{\delta ^3}\le \frac{\eps}{|v-1|^3}\le \eps ^{1-3\lambda_1}<1$. Therefore
one can write \eqref{desigualtatbox} as
\begin{equation}\label{desigualtatbox2}
M\frac{\varphi''(1)}{2} +1 + g (v;\eps) <0 
\end{equation}
where the function $g(v)$ satisfies:
$$
|g(v;\eps) | \le  2M \left [C \delta +M\eps ^{(1-3\lambda_1)}\right] .
$$
As $\varphi''(1)$ is negative, one can choose $M$ big enough, for instance $M\frac{\varphi''(1)}{2}+1<-2C$,  
and then take $\eps _0$ and $\delta$ small enough such that $|g(v;\eps)| <C$ if $\eps <\eps_0$, to have that \eqref{desigualtatbox2} holds.

At  the points $(n_0(v),v)$  of the boundary
${\bf B}^- = \{(x,v) \, ,\quad x= n_0(v)\}$
one has that  the vector field \eqref{eq:fast} is given by
$Z_\eps (n_0(v),v) = (\frac{1+\varphi(v)}{2},0)$ and therefore, as $1+\varphi(v) >0$ for $v\ge -1$ the flow points inward also in this boundary.

When $v=1-\delta$ and $n_0(v) \le x \le n_0(v)+ \frac{M \eps }{|v-1|}$ we also have that $\dot v>0$ and therefore the flow also points 
inward in this boundary.
To conclude the proof we just observe that once the orbits  enter the set ${\bf B}$ as $\dot v>0$ in ${\bf B}$, they can only leave it 
through the upper boundary $v=1-\eps {\la _1}$.
\endproof

By Fenichel theorem \ref{thm:fenichel} and remark \ref{rem:canvivar}, we know that that the invariant manifold $\Lambda_\eps$ is a smooth 
manifold that is $\eps$- close to $\Lambda _0$, which is given by $v=m_0(x)$ ,  until it arrives to $v=1-\delta$. Moreover, $m_0(x)$ is an  
invertible function whose inverse is $n_0(v)$. 
Therefore, in this region the Fenichel manifold can be written as:
$$
x=n(v;\eps)=n_0(v)+ \eps n_1(v) + \OO(\eps ^2), \ \mbox{for} \ v=1-\delta
$$ 
and, as $n_1(v) >0$ for $-1\le v\le 1$ (see \eqref{expansio2}), redefining the constants $M$ big enough and  $\delta$ small enough in proposition 
\ref{blocouter}, the manifold enters in the domain ${\bf B}$ for $v^*= 1-\delta$. 
Then,  it stays there at least until $v^{**}=1-\eps ^{\lambda_1}$ verifying:
$$
n_0(v) < n(v;\eps) < n_0(v)+ \frac{M \eps}{1-v}, \  \mbox{if} \ 1-\delta \le v \le 1-\eps ^{\lambda_1}.
$$
Moreover,  using Theorem \ref{thm:fenichel}, as the manifold $\Lambda _\eps$ attracts exponentially any other solution, all the solutions of 
system \eqref{eq:slow} with initial conditions in $U$ verify the same inequality.

Furthermore, as, for any $\la >0$, one has that  $n_0(1-\eps ^\lambda)= \frac{\varphi''(1)}{8}\eps ^{2\lambda}+ \OO(\eps ^{3\lambda})$,
one concludes  that  
$$
n(1-\eps^\lambda;\eps) =\frac{\varphi''(1)}{8}\eps ^{2\lambda} +  \OO(\eps^{1-\lambda},\eps^{3\lambda})
$$
for any $0<\lambda \le \lambda_1<\frac{1}{3}$ and $\lambda_1$ is the value given in proposition \ref{blocouter}.

And, again, as all the solutions enter in the block exponentially closer to $\Lambda_\eps$,
any solution $x(v)$ with initial condition $x(1)= x_0$ with $-L\le x_0 \le -N$ verifies the same asymptotics:
$$
x(1-\eps^\lambda) =\frac{\varphi''(1)}{8}\eps ^{2\lambda} +  \OO(\eps^{1-\lambda},\eps^{3\lambda}).
$$

\subsubsection{The inner domain}

To reach $v=1$ we need to change our strategy.  Looking at the asymptotic behavior of the functions 
$n_0(v)$, $n_1(v)$, given in \eqref{expansio1} \eqref{expansio2}, one can see that the expansion of 
$n(v;\eps)$ looses its asymptoticity for $v=1-O(\eps ^{1/3})$.
Moreover, $n(v;\eps)$ has order $\eps^{2/3}$ for these values of $v$.
To study this range of values of $v$ we perform the change:
\begin{equation}\label{canviinner}
\begin{array}{rcl}
x &= &\eps ^{2/3} \eta  \\
v &=&1+\eps ^{1/3}u
\end{array}
\end{equation}
to system \eqref{eq:slow} obtaining:
\begin{eqnarray}
\eta'&=& \eps^{ 1/3}\frac{1+\varphi(1+\eps ^{1/3}u)}{2}\label{eq:systeminner}\\
u' &=& \frac{\eps^{- 1/3}}{2}\left(1+2\eps^{2/3}\eta+\varphi(1+\eps ^{1/3}u)(2\eps^{2/3}\eta-1)\right)\nonumber
\end{eqnarray}

The equation for the orbits \eqref{eq:orbitsc1} in these new variables becomes:
\begin{equation}\label{eq:manifoldinner0}
\frac{d \eta}{du}= \frac{\eps^{2/3}(1+\varphi(1+\eps ^{1/3}u))}
{\left(1+2\eps^{2/3}\eta(u)+\varphi(1+\eps ^{1/3}u)(2\eps^{2/3}\eta(u)-1)\right)}
\end{equation}
Calling $\mu = \eps ^{1/3}$, one can write this equation as:
\begin{equation}\label{eq:manifoldinnerorbits}
\frac{d \eta}{du}= \frac{\mu ^2(1+\varphi(1+\mu u))}{\left(1+2\mu^2\eta(u)+\varphi(1+\mu u)(2\mu ^2\eta(u)-1)\right)}
\end{equation}
and we need to study the extension of a solution of this equation $\eta(u;\eps)$, with initial condition  
$\eta (u^*;\eps)$, with 
\begin{equation}\label{eq:ustar}
u^*= \frac{v^*-1}{\eps ^{1/3}}=-\eps ^{\lambda_2-1/3}, \ \mbox{with} \  0<\la _2\le  \la _1,
\end{equation}
where $\la_1$ is given in Proposition \ref{blocouter}, verifying
\begin{equation}\label{eq:initialconditioninnern}
|\eps^{2/3}\eta (u^*;\eps)-n_0(v^*)| \le M\eps ^{1-\lambda_2}
\end{equation}
where 
\begin{equation}\label{eq:vstar}
v^*= 1+\eps ^{1/3}u^*=1-\eps^{\lambda_2}, 
\end{equation}
 to the domain:
\begin{equation}\label{eq:ustardomain}
u^* \le u \le 0, \ u^*= -\eps ^{\la_2 - 1/3}
\end{equation}
which corresponds to $v^*\le v\le 1$.

Formally expanding the solution $\eta(u;\eps)$ of equation \eqref{eq:manifoldinnerorbits} in powers of $\mu=\eps ^{1/3}$
\begin{equation}\label{eq:expansioneta}
\eta (u;\eps)=\eta _0(u)+\mu \eta _1(u)+\OO(\mu^2)
\end{equation}
one can see that  $\eta _0$ is the solution of the so called \emph{inner equation}:
\begin{eqnarray}\label{eq:edoeta0}
\eta _0'= \frac{d \eta_0}{d u}= \frac{4}{8\eta _0 -\varphi''(1)u^2}
\end{eqnarray}
which, with the changes $\bar \eta = \alpha \eta, \quad \bar u = \mu u$, where 
$$
\alpha = -(\frac{\varphi''(1)}{2})^{1/3}, \quad \beta =\left( \frac{(\varphi''(1))^2}{32}\right)^{\frac{1}{3}}
$$
becomes
\begin{equation}
\frac{d \, \bar \eta}{d \,\bar u}= \frac{1}{\bar \eta + \bar u ^{2}}.
\end{equation}

It is known \cite{MischenkoR80} that this  equation has a unique solution $\bar \eta _0(\bar u)$ 
which approaches the parabola $\bar \eta = - \bar u ^2$ as $\bar u \to -\infty$. In fact one has that
\begin{eqnarray}
\bar \eta _0(\bar u) &=& -\bar u^2 - \frac{1}{2\bar u} - \frac{1}{8 \bar u ^4} + 
\OO(\frac{1}{\bar u^7}), \ \bar u \to -\infty\\
\bar \eta _0(\bar u) &=& \Omega _0- \frac{1}{\bar u}+ 
\OO(\frac{1}{\bar u^3}), \ \bar u \to \infty.
\end{eqnarray}
Going back to our variables one has that equation \eqref{eq:edoeta0} has a solution  $\eta_0(u)$ satisfying:
\begin{equation}\label{eq:asimeta0}
\begin{array}{rcl}
\eta _0 ( u) &=& \frac{\varphi''(1)}{8} u^2 + \frac{2}{\varphi ''(1) \, u} +\frac{16}{(\varphi ''(1))3 \,  u ^4} 
+ \OO(\frac{1}{ u^7}),  u \to -\infty\\
\eta_0 ( u) &=&- \frac{2^{1/3}\Omega _0}{(\varphi''(1))^{1/3}}+\frac{4}{\varphi ''(1) \,  u}
+ \OO(\frac{1}{ u^3}),  u \to \infty.
\end{array}
\end{equation}
On the other hand, if one considers the next term in the expansion \eqref{eq:expansioneta} of $\eta (u,\eps)$, one has that
$\eta _1(u)$ is the solution of the equation:
$$
\eta '_1(u)= -\frac{8}{(4\eta _0(u)-\frac{\varphi''(1)}{2} u^2)^2} \eta _1 + 
\frac{2 \frac{\varphi'''(1)}{6} u^3}{(4\eta _0-\frac{\varphi''(1)}{2} u^2)^2} 
$$
which is a linear equation.
It is straightforward to see that there is a solution $\eta _1$  of this equation that, near $-\infty$, 
behaves as:
\begin{equation}\label{asineta1}
\eta _1 (u) \simeq \frac{\varphi'''(1)}{24}u^3+ \OO(u^2),
\end{equation}
and this suggests to consider the isolating block defined by a condition of the type
\begin{equation}\label{eq:noubloc}
|\eta (u) -\eta _0(u)| \le K\mu |u|^3.
\end{equation}

As a consequence of the expansion of $\eta _0$ near $-\infty$  in \eqref{eq:asimeta0} and the asymptotic expansion of $n_0(v)$ near $v=1$ 
\eqref{no}, one has that there exist constants $K_1$, $K_2$, such that
$$
|\eps ^{2/3}\eta _0(u^*)-n_0(v^*)| \le K_1 \eps ^{3\lambda_2} + K_2 \eps ^{1-\lambda_2},
$$
where $v^*$, $u^*$ are  given in \eqref{eq:vstar} and \eqref{eq:ustar} and therefore, by \eqref{eq:initialconditioninnern} and \eqref{eq:noubloc}
one has:
\begin{equation}\label{initialconditionmatching}
|\eps ^{2/3}\eta (u^*;\eps)-\eps ^{2/3}\eta _0(u^*)| \le M \eps ^{1-\lambda_2}+K_1 \eps ^{3\lambda_2} + K_2 \eps ^{1-\lambda_2},
\end{equation}
and we can conclude that the solution given by proposition \ref{blocouter} verifies \eqref{initialconditionmatching} at $u=u^*$ if we take 
$\lambda_2 <\min{(1/4,\lambda_1)}$.

Next proposition proves that any solution verifying \eqref{eq:noubloc} at $u=u^*$, stays close to $\eta_0(u)$ until $u=0$ which corresponds to $v=1$.

\begin{proposition}\label{prop:blocinner}
Take any $0<\la_2<\frac{1}{4}$. Then, there exists $u_0>0$, $K>0$,  and $\eps _0 = \eps _0(u_0,K)$, such that for $|\eps | \le \eps _0$, any solution of system \eqref{eq:systeminner}which enters the set
$$
{\bf B_2}=\{ (u,\eta), \ u^*\le u \le 0, \quad |\eta (u) -\eta _0(u)| \le K\mu M(u)\}
$$
where $u^*=-\eps ^{\la _2-\frac{1}{3}}$,  $\mu = \eps ^{1/3}$,  and the function
$M(u)$ is  defined by:
$$
M(u)= \left\{ 
\begin{array}{ll}
-u^3 & -\infty \le u \le -u_0 <0 \\
u_0^3 &-u_0 \le u \le 0,
\end{array}\right.
$$
leaves it through the boundary $u=0$.
\end{proposition}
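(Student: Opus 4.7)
The plan is to prove that $\mathbf{B_2}$ is an isolating block whose only exit face is $\{u=0\}$: since on $\mathbf{B_2}$ the denominator of the orbit equation \eqref{eq:manifoldinnerorbits} stays strictly positive and hence $u'>0$, I need only verify that on the upper boundary $\{w = K\mu M(u)\}$ and the lower boundary $\{w = -K\mu M(u)\}$ (where $w := \eta-\eta_0$) the vector field points strictly inward, and that orbits furnished by Proposition \ref{blocouter} enter $\mathbf{B_2}$ at $u=u^*$. Subtracting the inner equation \eqref{eq:edoeta0} from \eqref{eq:manifoldinnerorbits} after Taylor-expanding $\varphi(1+\mu u)$ about $v=1$, one obtains
\[
\frac{dw}{du}\;=\;\frac{-8w+\tfrac{\varphi'''(1)}{3}\mu u^{3}+R(u,w;\mu)}{D(u,w;\mu)\,D_{0}(u)},
\]
where $D_{0}(u)=4\eta_{0}(u)-\tfrac{\varphi''(1)}{2}u^{2}$, $D=D_{0}+4w-\tfrac{\varphi'''(1)}{6}\mu u^{3}+\OO(\mu^{2}(1+u^{4}))$, and $R=\OO(\mu^{2}(1+u^{4}))$. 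By the asymptotics \eqref{eq:asimeta0}, $D_{0}>0$ on all of $[u^{*},0]$, with $D_{0}(u)\sim 8/(\varphi''(1)u)$ as $u\to -\infty$ and $D_{0}$ bounded below by a positive constant near $0$.

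On the upper boundary with $u\in[u^{*},-u_{0}]$, where $M(u)=-u^{3}$, substituting $w=-K\mu u^{3}$ gives a numerator $\mu u^{3}(8K+\varphi'''(1)/3)+\OO(\mu^{2}(1+u^{4}))$ and a denominator $DD_{0}>0$; choosing $K>-\varphi'''(1)/24$ makes this numerator strictly negative since $u<0$. The required inward condition $dw/du<K\mu M'(u)=-3K\mu u^{2}$ then reduces, after clearing $DD_{0}$, to
\[
DD_{0}\;<\;\frac{|u|\,(8K+\varphi'''(1)/3)}{3K}\bigl(1+\oo(1)\bigr).
\]
I verify this in two regimes. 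In the ``slow-manifold'' regime $4w\ll D_{0}$ (equivalently $|u|\ll\mu^{-1/4}$), $DD_{0}\sim c_{1}^{2}/u^{2}$ with $c_{1}=-8/\varphi''(1)>0$, so the inequality reads $|u|^{3}\gtrsim 3c_{1}^{2}/8$, which is precisely what fixes $u_{0}$ large enough. In the ``far'' regime $4w\gg D_{0}$ (possible all the way up to $|u^{*}|=\mu^{3\lambda_{2}-1}$), $DD_{0}\sim c_{1}\mu|u|^{2}(4K+\varphi'''(1)/6)$ and the condition becomes $\mu|u|\lesssim 1/K$, which holds on $[u^{*},-u_{0}]$ because $\mu|u^{*}|=\mu^{3\lambda_{2}}\to 0$ as $\mu\to 0$ (here $\lambda_{2}>0$). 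For $u\in[-u_{0},0]$, $M'(u)=0$, so the condition is $dw/du<0$; since $D_{0}$ is bounded below by a positive constant on this compact interval, the feedback $-8w/(DD_{0})$ provides uniform linear contraction that dominates the $\OO(\mu)$ perturbation for $K$ large and $\mu$ small. The lower boundary is handled by sign-symmetric estimates.

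Finally, on $\{u=u^{*}\}$ one combines the matching bound \eqref{initialconditionmatching} with the asymptotic expansion \eqref{eq:asimeta0} of $\eta_{0}(u^{*})$ to get $|w(u^{*})|\le C(\mu^{1-3\lambda_{2}}+\mu^{9\lambda_{2}-2})$. Since $K\mu M(u^{*})=K\mu^{9\lambda_{2}-2}$, the inclusion $|w(u^{*})|\le K\mu M(u^{*})$ reduces to $1-3\lambda_{2}\ge 9\lambda_{2}-2$, i.e.\ $\lambda_{2}\le 1/4$, and $K$ is chosen to absorb the implicit constants. Thus the constants are fixed in the order $u_{0}$ first (large enough for the slow-manifold regime), then $K$ (large enough to beat $\varphi'''(1)$ and absorb the entrance estimate), then $\eps_{0}$. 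The principal technical obstacle is the uniform control of the upper-boundary inequality across the transition $|u|\sim\mu^{-1/4}$, where the denominator $D$ switches between being dominated by $D_{0}$ and by $4w$; the explicit two-regime case split above resolves it once the $\OO(\mu^{2}u^{4})$ remainders in $R$ are bookkept, which is a routine choice of $\eps_{0}$ depending on the previously fixed $u_{0},K$.
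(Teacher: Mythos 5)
Your proposal is, in substance, the paper's own proof: you use the same isolating block ${\bf B_2}$, the same leading balance (the restoring term $-8w$ against the forcing $\tfrac{\varphi'''(1)}{3}\mu u^{3}$ plus $\OO(\mu^{2})$ remainders), the same order of fixing constants ($u_0$, then $K$, then $\eps_0$), and the same entrance estimate at $u=u^{*}$ via \eqref{initialconditionmatching} and \eqref{eq:asimeta0}, which is exactly where $\la_2\le\tfrac14$ enters. The only difference is presentational: the paper checks inwardness by taking scalar products of the time-parametrized field \eqref{eq:systeminner} with exterior normals to ${\bf B_2}^{\pm}$ and bounds one error function $G(u;\mu)$ uniformly, whereas you divide by the denominator and compare $dw/du$ with $\pm K\mu M'(u)$, making the regimes $4w\ll D_0$ and $4w\gg D_0$ explicit; the estimates agree (your far-regime condition $\mu|u|<\tfrac{2}{3Kc_1}$ is the paper's $\mu K|u|$ term in $G$).

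One caveat, which the paper's write-up shares (its ``$\dot u>0$ in ${\bf B_2}$''): the claim that the denominator is strictly positive on all of ${\bf B_2}$ is not literally true, since the half-width $K\mu|u|^{3}$ exceeds the $\OO(1/|u|)$ distance from $\eta_0$ to the $\dot u$-nullcline $\eta=\mu^{-2}n_0(1+\mu u)$ once $|u|\gtrsim\mu^{-1/4}$, and $|u^{*}|=\mu^{3\la_2-1}\gg\mu^{-1/4}$ precisely because $\la_2<\tfrac14$; so the lower boundary dips below the nullcline for large $|u|$. This is harmless for the orbits you actually track: they enter at $u=u^{*}$ above the nullcline (coming from the block of Proposition \ref{blocouter}, where $x\ge n_0(v)$), and on the nullcline the flow points strictly upward, so they never cross it; along them $u$ is a legitimate parameter and the reachable part of the lower boundary has positive denominator. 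Note that the paper's normal-vector computation never divides by the denominator, so its boundary inequalities are insensitive to this sign, while your manipulations ``after clearing $DD_0$'' (and the ``sign-symmetric'' lower-boundary case) should be explicitly restricted to the region above the nullcline, which suffices.
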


\proof
To proof this proposition we need to see that the vector field \eqref{eq:systeminner} points inwards in the three boundaries of ${\bf B_2}$: 
$$
{\bf B_2}^\pm=\{ (u,\eta), \ u^*\le u \le 0, \quad \eta (u) =\eta _0(u)\pm  K\mu M(u)\}, 
$$
and $u=u^*$.

The exterior normal vector to ${\bf B_2^+}$ is given by 
$n^+= (1, -\eta '_0(u)-K\mu M'(u))$, therefore the condition for the solutions don't leave the box on its right boundary ${\bf B_2^+}$ is:
\begin{equation}\label{blocinner}
E:= <v, n^+> <0
\end{equation}
where $v=(\mu ^2 (1+\varphi(1+\mu u)), 1+2\mu ^2 \eta +\varphi(1+\mu u)(2\mu ^2 \eta -1))$.

First observation is that
\begin{eqnarray*}
E&=& \mu ^2 (1+\varphi(1+\mu u)) -E_2\\
E_2&=& E_1 (\eta '_0(u)+K\mu M'(u))\\
E_1&=& 1+2\mu ^2 \eta +\varphi(1+\mu u)(2\mu ^2 \eta -1) = 1-\varphi(1+\mu u)+2\mu ^2(1+ \varphi(1+\mu u))\eta
\end{eqnarray*}
We can develop $E_1$ by using the Taylor series of the function $\varphi$:
$$
\varphi(1+\mu u)=1+\frac{\varphi''(1)}{2}(\mu u)^2+ O((\mu u)^3)
$$
and the fact that, in ${\bf B_2^+}$, 
one has that $\eta (u) =\eta _0(u)+ K\mu M(u)$, and also the equivalence \eqref{eq:edoeta0}, obtaining
\begin{eqnarray*}
E_1 
&=& \frac{2\mu ^2}{\eta '_0(u)}+ 4 K \mu ^3 M(u) + g(u;\mu)
\end{eqnarray*}
where $g(u;\mu)$ is exactly given by
\begin{equation}\label{eq:g}
g(u;\mu)=-\varphi(1+\mu u)+1+\frac{\varphi''(1)}{2}(\mu u)^2 + 2\mu ^2 (\varphi(1+\mu u)-1)(\eta _0(u)+K\mu M(u))
\end{equation}
From the asymptotics of $E_1$ one easily obtains:
$$
E_2=  2\mu ^2 +4K\mu ^3 M(u)\eta'_0(u)+ \tilde g(u;\mu)
$$
where
\begin{equation}\label{eq:tildeg}
\tilde g(u;\mu)= \left(   \frac{2\mu ^2}{\eta'_0}+ 4K\mu^3 M(u)+g(u;\mu)\right)K\mu M'(u)+ g(u;\mu)\eta'_0(u)
\end{equation}
and finally:
$$
E=-4K\mu ^3 \eta'_0(u) M(u)+ \bar g(u;\mu)
$$
and 
\begin{equation}\label{eq:barg}
\bar g(u;\mu)= - \tilde g(u;\mu)+ \mu ^2\left(\varphi(1+\mu u)-1\right)
\end{equation}

Now we need to bound the remainder $\bar g(u;\mu)$. 
To this end, using the asymptotics for $\eta _0$ given in \eqref{eq:asimeta0}, 
we know that there exists $a>1$, $C>0$ such that:
\begin{eqnarray*}
|\eta _0(u)| \le C u^2, \quad |\eta '_0(u)| \le C u, \quad \mbox{if} \quad u\le -a \\
|\eta _0(u)| \le C, \quad |\eta '_0(u)| \le C , \quad \mbox{if} \quad -a\le u \le 0 .
\end{eqnarray*} 
In the sequel we will take $u_0 >a$ and we denote by the letter $C$ to any constant independent of $u_0$, $K$.
Also, we will use that, in the considered domain, $|\mu u| <1$ and that we can assume that $K>1$.

Using these bounds for $\eta_0$ and \eqref{eq:fipropdeu} with $p=2$,  we can bound $g(u;\mu)$ as
\begin{eqnarray*}
u^*\le u\le -u_0& ,& \quad |g(u;\mu)|\le C\left( |\mu u |^3 (1+K|\mu u|^2)\right)\\
-u_0 \le u \le 0 &,& \quad |g(u;\mu)| \le C(\mu u_0 )^3 (1+K |\mu u_0|^2)
\end{eqnarray*}
From this bound we obtain:
\begin{eqnarray*}
u^*\le u\le -u_0& ,& \quad |\tilde g(u;\mu)|\le C ( \mu ^3 |u|^4 + \mu ^3  K |u| +\mu ^4 K^2|u|^5 + \mu ^5 K |u|^6 +\mu ^6 K^2 |u|^7 )\\
-u_0 \le u \le 0 &,& \quad |\tilde g (u;\mu)|\le C(\mu u_0 )^3 (1+K|\mu u_0|^2)
\end{eqnarray*}
and for $\bar g(u;\mu)$:
\begin{eqnarray*}
u^*\le u\le -u_0& ,& \quad |\bar g(u;\mu)|\le  C ( \mu ^3 |u|^4 + \mu ^3  K |u| +\mu ^4 K^2|u|^5 + \mu ^5 K |u|^6 +\mu ^6 K^2 |u|^7 ) \\
-u_0 \le u \le 0 &,& \quad |\bar g(u;\mu) |\le C(\mu u_0 )^3 (1+K|\mu u
_0|^2)+ \mu ^4  u_0 ^2
\end{eqnarray*}

Finally, one can write:
$$
E= 4K\mu ^3 M(u) \eta'_0(u) \left( -1+ G(u;\mu)\right)
$$
where $G$ is the function 
$$
G(u;\mu)=\frac{\bar g(u;\mu)}{4K \mu ^3  M(u)\eta'_0(u)}=\frac{\bar g(u;\mu)}{4K \mu ^3  M(u)}   (8\eta_0(u)-\varphi''(1)u^2).
$$ 
Using that 
\begin{eqnarray*}
|4K \mu ^3  M(u)\eta'_0(u)| \ge C K \mu ^3 u^4 \quad \mbox{if} \quad u\le -u_0\\
|4K \mu ^3  M(u)\eta'_0(u)| \ge C K \mu ^3 u_0^3 \quad \mbox{if} \quad -u_0 \le u \le 0 ,
\end{eqnarray*}
the function $G$  verifies the following bounds:
\begin{eqnarray*}
u^*\le u\le- u_0& ,& \quad |G(u;\mu)|\le C  (\frac{1}{K}+\frac{1}{|u|^3}+ \mu K |u|+\mu ^2 |u|^2 +\mu ^3 K |u|^3) 
\\ 
-u_0 \le u \le 0 &,& \quad |G(u;\mu)| \le C(\frac{1}{K} + \mu ^2 u_0 ^2 +\frac{\mu}{K u_0})
\end{eqnarray*}
and therefore, 
using that  $\eta' _0(u)$ is a positive function for any $u\le 0$ 
one can choose $K$  and $u_0$ big enough in such a way that 
$$
|\frac{1}{K}+\frac{1}{u_0^3} | \le \frac{1}{4C}
$$
and then, using that $|\mu u| \le  |\mu u^*| = \eps ^{\la _2}$, 
$E$ is negative if $\eps$, and therefore $\mu =\eps ^{\frac{1}{3}}$, is small enough.

The proof for ${\bf B_2^-}$ is analogous.

When $u=u^*$ one has that the flow of \eqref{eq:systeminner} verifies $u'>0$, therefore it also points inwards ${\bf B_2}$.

\endproof

As in ${\bf B_2}$ one has that $\dot u>0$,  we have that the solutions which enter  ${\bf B_2}$ leave it at $u=0$. 
By \eqref{initialconditionmatching}, the invariant manifold $n(v;\eps)=\eps ^{2/3}\eta(\frac{v-1}{\eps^{1/3}})$, and 
therefore any solution $x(v;\eps)$, enters in it at $v=v^*$  and we have then it  crosses the line $v=1$ at a point verifying:
$$
x(1;\eps)= \mu ^2\eta_0(0)+ O(\mu ^3) = \eps ^{2/3} \eta (0)+ O(\eps)
$$

\subsubsection{Exponential attraction of the whole neighborhood of the fold}\label{sec:exponential}

Once we have a complete control of the Fenichel invariant manifold
until it reaches the boundary $v=1$ of our regularized system \eqref{eq:slow},  now it is necessary to prove that this manifold attracts all 
the points in the section 
$\{ (x,v), \ v=1, \ -L\le x\le -\varepsilon^\la\}$ for $0<\lambda <\frac{2}{3}$. This is an extension of the last item of Theorem \ref{thm:fenichel}.
To this end, we need   a better control of the manifold $\Lambda _\eps =\{ (x,v), \ v=m(x;\varepsilon)\}$ in this region.

Using that the function $m(x;\eps)$ is the inverse of the function $n(v;\eps)$ (see Remark \ref{rem:canvivar}) which verifies the 
invariance equation \eqref{eq:manifold}, we obtain an invariance equation for $m$:
$$
1+2x + \varphi (m(x;\eps))(2x-1)= \eps (1+\varphi (m(x;\eps))) m'(x;\eps).
$$
Writing:
$$
m(x;\eps)= m_0(x)+\eps m_1(x)+ \eps ^2 m_2(x)+\dots
$$
one  gets:
\begin{eqnarray}
m_0(x) &=&\varphi ^{-1}(\frac{1+2x}{1-2x})\label{m0}\\
m_1(x)&=&\frac{(1+\varphi(m_0(x))m_0'(x)}{\varphi'(m_0(x))(2x-1)}
=-\frac{1}{2}(m_0'(x))^2 \nonumber
\end{eqnarray}
were we have used the relation
\be \label{vpm}
\varphi '(m_0(x))=\frac{4}{m' _0(x) (1-2x)^2}
\ee
and one can get more terms in the expansion, but just with these terms one can guess the main part of the asymptotic expansion of $m(x;\eps)$.

Observe that:
$$
m_0(x)=\varphi ^{-1}(\frac{1+2x}{1-2x})=\varphi ^{-1}(1+2x+4x^2 +\dots)
$$
on the other hand $\varphi (v)= 1+\frac{\varphi''(1)}{2}(v-1)^2+\OO(v-1)^3$, $v\le 1$,  and therefore we obtain that:
\be\label{m0-1}
m_0(x) =1 -\frac{2}{\sqrt{-\varphi''(1)}}\sqrt{|x|}+ \OO(x),\quad m_1(x)
=\OO(\frac{1}{x}), \ x\le 0.
\ee
Looking at these terms one can guess that the asymptotic expansion for $m(x;\eps)$ will fail at 
$x=\OO(\eps ^{2/3})$, which corresponds to $v=m_0(x) \simeq 1+ \OO(\eps^{1/3})$ as we saw in proposition \ref{blocouter}. 
This is  given in next proposition.

\begin{proposition}\label{prop:atractiogran1}
Let $L>0$ the constant given in Theorem \ref{thm:fenichel} and $0<\la <2/3$.
Then, there exists $K>0$ and $\varepsilon_0>0$, such that, if $0\le \varepsilon<\varepsilon _0$ 
the invariant manifold $\Lambda_\eps=\{(x,v), \ v=m(x;\eps)\}$ verifies, for $-L\le x\le -\eps ^{\la}$:
\be\label{eq:varietatconfinada}
m_0(x)+\frac{\eps K}{x}\le m(x;\eps)\le m_0(x) .
\ee
\end{proposition}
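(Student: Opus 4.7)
The proof follows the same template as Proposition \ref{varietatconfinadalineal}: we introduce the isolating block
\[
\tilde{B} = \Bigl\{(x,v) : -L \le x \le -\eps^\lambda,\ m_0(x) + \frac{\eps K}{x} \le v \le m_0(x)\Bigr\}
\]
and check that the vector field of the slow system \eqref{eq:slow} points strictly inward on its upper, lower and left faces, and that the Fenichel manifold $\Lambda_\eps$ enters $\tilde B$ at $x=-L$. Since $\dot x = (1+\varphi(v))/2 > 0$ throughout $\tilde B$ and $\Lambda_\eps$ is invariant, this will force $m(x;\eps)$ to stay trapped in $\tilde B$ until $x=-\eps^\lambda$, yielding the two inequalities of \eqref{eq:varietatconfinada}.

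On the upper face $v = m_0(x)$ one has $\dot v=0$ by definition of $m_0$ and $\dot x=1/(1-2x)>0$; since $m_0'(x)>0$, the outward normal $(-m_0'(x),1)$ produces an inward flow. On the left face $x=-L$, $\dot x>0$. Entry of $\Lambda_\eps$ at $x=-L$ follows from Remark \ref{rem:canvivar}: the expansion $m(-L;\eps)=m_0(-L)+\eps m_1(-L)+O(\eps^2)$ with $m_1(-L)=-\tfrac{1}{2}(m_0'(-L))^2<0$ places $\Lambda_\eps$ strictly below $m_0$, and respects the lower bound $m_0(-L)-\eps K/L$ as soon as $K>\tfrac{L}{2}(m_0'(-L))^2$.

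The substantive step is the lower face $v=m_0(x)+\eps K/x$. Expanding $\varphi(v)=\varphi(m_0(x))+\varphi'(m_0(x))\tfrac{\eps K}{x}+O(\eps^2/x^2)$, using $\varphi(m_0(x))=(1+2x)/(1-2x)$ and the identity \eqref{vpm} to substitute $\varphi'(m_0(x))=4/(m_0'(x)(1-2x)^2)$, one obtains
\[
2\eps\dot v \;=\; 1+2x+\varphi(v)(2x-1) \;=\; -\frac{4\eps K}{x\,m_0'(x)(1-2x)} + O\!\left(\frac{\eps^2 K^2}{x^2}\right).
\]
The outward normal on this face is $(m_0'(x)-\eps K/x^2,-1)$, and after substituting $\dot x$ the inward condition $\langle n,Z_\eps\rangle<0$ reduces, with the asymptotic $m_0'(x)\sim (-\varphi''(1))^{-1/2}|x|^{-1/2}$ from \eqref{m0-1}, to
\[
\frac{1}{\sqrt{|x|}}\left(\frac{1}{\sqrt{-\varphi''(1)}}-2K\sqrt{-\varphi''(1)}\right) + O\!\left(K\eps^{1-\lambda},\,K^2\eps^{1-2\lambda}\right) < 0.
\]

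The main difficulty, and the source of the bound $\lambda<\tfrac{2}{3}$, is balancing these contributions: the leading part has size $K|x|^{-1/2}\gtrsim K\eps^{-\lambda/2}$, while the error contributions from $\eps K/x^2$ in the normal and from the second-order Taylor remainder scale as $K\eps^{1-2\lambda}$ and $K^2\eps^{1-2\lambda}$. Their ratio to the leading term is of order $\eps^{1-3\lambda/2}$, which tends to zero exactly when $\lambda<\tfrac{2}{3}$ (precisely the natural breakdown scale of the outer expansion, consistent with the inner rescaling $x=\eps^{2/3}\eta$ used in the next subsection). Taking $K>1/(-2\varphi''(1))$ large enough to also accommodate the initial bound at $x=-L$, and $\eps$ small enough to dominate the remainders, the bracket is a fixed negative constant and the inward inequality closes, completing the proof.
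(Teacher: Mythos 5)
Your proof is correct and follows essentially the same route as the paper's: the same isolating block $\tilde B$, the same inward-normal computation on the lower face $v=m_0(x)+\eps K/x$ using the identity \eqref{vpm} and the asymptotics \eqref{m0-1} of $m_0'$, the same $\eps^{1-3\lambda/2}$ balance that produces the restriction $\lambda<2/3$, and the same entry argument for $\Lambda_\eps$ via the Fenichel expansion with $m_1<0$. The only (harmless) difference is bookkeeping: the paper controls the leading term and the remainders with two-sided bounds $C_1|x|^{-1/2}\le m_0'(x)\le C_2|x|^{-1/2}$ valid on all of $[-L,-\eps^{\lambda}]$, so the admissible $K$ depends on $L$ (not only on $\varphi''(1)$); your ``$K$ large enough'' is understood to absorb this.
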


\proof
To proof this proposition we will see that the set
\be\label{blocconfinada}
\tilde B=\{ (x,v), \ -L\le x\le -\eps ^\la, \quad m_0(x)+\frac{\eps K}{x}\le v \le m_0(x)
\}
\ee
is positively invariant for system \eqref{eq:fast}.
Therefore, one needs to check that the flow points  inwards in three of the borders of $\tilde B$.

In the  upper border  
$\tilde B ^+ =\{ (x,v), \ -L\le x\le -\eps ^\la, \quad v= m_0(x)\}$, 
the vector field 
$Z_\eps$ 
in \eqref{eq:fast} is of the form $(\eps(1+\varphi(m_0(x)))/2,0)$, 
and $(1+\varphi(m_0(x)))/2>0$ therefore the flow 
points inward $\tilde B$ along this border.

So, we need to check the border 
$$
\tilde B^-=\{ (x,v),\quad v=m_0(x)+\frac{K\eps}{x}\},
$$ 
whose normal exterior vector is
$n=(m_0'(x)-\frac{K\eps}{x^2},-1)$, and it is  enough to see that
$$
<n,X>_{|\tilde B^-} <0
$$
for $X=(\eps (1+\varphi(m_0(x)+\frac{K\eps}{x})), 1+2x+\varphi(m_0(x)+\frac{K\eps}{x})(2x-1))$,
which becomes:
$$
(m'_0(x)-\frac{K\eps}{x^2})\eps [1+\varphi(m_0(x)+\frac{K\eps}{x})]-[1+2x+\varphi(m_0(x)+\frac{K\eps}{x})(2x-1)]<0 .
$$
Taylor expanding the function $\varphi$ one has that:
\begin{equation}\label{Taylorphi}
\varphi(m_0(x)+\frac{K\eps}{x})=\varphi(m_0(x))+\varphi '(m_0(x))\frac{K\eps}{x} +h(x;\eps)
\end{equation}
our condition reads:
\be\label{eq:M}
\eps \left[ m'_0(x)(1+\varphi(m_0(x)))-\varphi '(m_0(x))\frac{K}{x}(2x-1)\right] +M(x;\eps) <0
\ee
where
\begin{eqnarray*}
M(x;\eps) &=& \eps ^2 m'_0(x)\varphi '(m_0(x))\frac{K}{x}- \eps ^2 \frac{K}{x^2}[1+\varphi(m_0(x))]\\
&-&\eps^3 \frac{K^2}{x^3}\varphi '(m_0(x))
+\left( \eps m' _0(x)- \frac{K\eps ^2}{x^2}-(2x-1)\right)h(x;\eps)
\end{eqnarray*}
Using \eqref{vpm} and that, by \eqref{m0-1}, there exist $C_1$, $C_2$:
\be \label{eq:boundsmprime}
\begin{array}{rcl}
\frac{C_1}{\sqrt|x|} &\le& m'_0(x)\le \frac{C_2}{\sqrt|x|}, \ \text{for}\quad -L\le x<0\\
C_1 \sqrt{|x|} &\le & 1-m_0(x) \le C_2 \sqrt{|x| },
\end{array}
\ee
we obtain that, the $\OO(\eps)$  terms of \eqref{eq:M} can be bounded,
choosing $K$ big enough depending on $C_1$, $C_2$, and therefore on $L$:
\begin{eqnarray*}
\eps \left[ m'_0(x)(1+\varphi(m_0(x)))-\varphi '(m_0(x))\frac{K}{x}(2x-1)\right] \\
=\eps \frac{2(m' _0(x))^2 x-4K}{m'_0(x)(1-2x)x} 
\le \frac{2C_2^2-4K}{C_1}\frac{\eps}{\sqrt{|x|}}\le -2\frac{\eps}{\sqrt{|x|}}.
\end{eqnarray*}

To end the proof we need to bound the higher order terms of \eqref{eq:M}  contained in $M(x;\eps)$.
Using again \eqref{vpm} and bounds \eqref{eq:boundsmprime}, we obtain:
\begin{eqnarray*}
 |\eps ^2 m'_0(x)\varphi '(m_0(x))\frac{K}{x}| &\le& \eps ^2 \frac{ 4 K}{(1-2x)^2 x}\le 4K \eps ^{1-\frac{1}{2}\la}\frac{\eps}{\sqrt{|x|}}\\
 |\eps ^2 \frac{K}{x^2}[1+\varphi(m_0(x))]|&\le& \frac{\eps ^2 2 K}{x^2(1-2x)}\le 2K\eps ^{1-\frac{3}{2}\la} \frac{\eps}{\sqrt{|x|}}\\
 |\eps^3 \frac{K^2}{x^3}\varphi '(m_0(x))| &\le& \frac{4\eps ^3 K^2}{C_1(1-2x)^2|x|^{5/2}} \le \frac{4 K^2 \eps ^{2-2\la}}{C_1} \frac{\eps}{\sqrt{|x|}}.
 \end{eqnarray*}
 Finally, using that, for any $0<\delta <1$, there exists $C_3>0$ such that
 $$
 |\varphi''(v)|\ \le C_3 \ \text{for} \quad 0<v \le 1- \delta
 $$ 
 and using that, for $\eps $ small enough $|m_0(x)+ \frac{K\eps}{x}-1|\le \delta $ if $-L\le x\le -\eps ^{\la}$ and also \eqref{Taylorphi}, one has:
$$
\left|\left( \eps m' _0(x)- \frac{K\eps ^2}{x^2}-(2x-1)\right)h(x,\eps)\right| \le \left( \eps ^{1-\frac{1}{2}\la} C_2
+K\eps ^{2-2\la}+(2L+1)\right) C_3 K^2 \eps ^{1-\frac{3}{2}\la} \frac{\eps}{\sqrt{|x|}}.
$$
Finally, putting all these bounds together, one has that, if $\eps$ is small enough, we get
$$
|M(x,\eps)| \le \frac{1}{2}\frac{\eps}{\sqrt{|x|}}
$$
and therefore
$$
<n,X>_{\tilde B^-} \le (-2+\frac{1}{2})\frac{\eps}{\sqrt{|x|}} \ll 0.
$$
At the boundary $x=-L$ one has that $\dot x>0$ therefore the flow points inward also in this border.
 
Now, we know that any orbit entering $\tilde B$ stays in it until it reaches $x=-\eps ^\lambda$.
But, by Theorem \ref{thm:fenichel} and Remark \ref{rem:canvivar} we know that the invariant manifold $\Lambda_\eps$ at $x=-N$ is given by
$$
x=m(x;\eps) = m_0(x)+\eps m_1(x)+\OO(\eps^2)
$$
and $m_1(x)<0$. 
Therefore, adjusting the constants to have $K>Nm_1(-N)$, the manifold enters $\tilde B$ and verifies \eqref{eq:varietatconfinada} for $-L\le x\le -\eps ^\la$.

\endproof

Next step is to see that the manifold $\Lambda_\eps$ attracts all the solutions with initial conditions at points $(x_0,1)$, if $-L\le x_0\le -\eps^\la$.
Let's introduce the equation for the orbits of system \eqref{eq:fast}:
\begin{equation}\label{eq:orbits}
\frac{\eps dv}{dx}=\frac{1+2x+\varphi(v)(2x-1)}{1+\varphi(v)}
\end{equation}
Then, one has:
\begin{proposition}\label{prop:atractiogran}
Fix $0<\la<\frac{2}{3}$ and take any  point $(x_0,1)$, with  $-L\le x_0\le -\eps^\la$.
Then, the orbit of system \eqref{eq:orbits} with initial condition $v(x_0)=1$ stays exponentially close to the 
invariant manifold $v=m(x;\eps)$ in the region $x_0\le x< -\eps^{2/3}$.
\end{proposition}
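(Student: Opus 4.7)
The plan is to mimic the strategy of Proposition \ref{prop:atractiogranlineal}: introduce the deviation $w(x;\eps)=v(x;\eps)-m(x;\eps)$, derive a linear-type equation $\eps w'=-g(x;\eps)\,w$, and establish exponential decay via Gronwall's inequality. The new difficulty in the $\CC^1$ setting is that $\varphi'(v)$ vanishes at the critical value $v=1$, so the coefficient $g$ is no longer bounded below by a positive constant as in \eqref{eq:orbitswlineal}, and a positive lower bound must be extracted from the geometry of the slow manifold.

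Using the invariance equation \eqref{eq:manifold} for $m(x;\eps)$ together with \eqref{eq:orbits}, a direct subtraction gives
$$\eps\,\frac{dw}{dx}=\frac{-2\,[\varphi(v)-\varphi(m)]}{(1+\varphi(v))(1+\varphi(m))}=-g(x,w)\,w,$$
where
$$g(x,w)=\frac{2}{(1+\varphi(v))(1+\varphi(m))}\int_0^1\varphi'(m+sw)\,ds\ \ge\ 0.$$
Since $w(x_0)=1-m(x_0;\eps)>0$ and $g\ge 0$, the function $w$ remains positive and non-increasing, so $v(x)\in(m(x;\eps),1]$ throughout the interval of interest.

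The crux of the argument is to show that $g(x,w)\ge C\sqrt{|x|}$ uniformly in $w\in[0,\,1-m(x;\eps)]$. The $\CC^1$ contact of $\varphi$ at $v=1$, together with $\varphi''(1)<0$, gives $\varphi'(v)\ge c_0(1-v)$ for $v\le 1$ sufficiently close to $1$ (and $\varphi'$ is uniformly bounded below by a positive constant on any compact subset of $(-1,1)$). Writing $\varphi(v)-\varphi(m)=\int_0^w\varphi'(m+s)\,ds$, using $w\le 1-m$,
$$\int_0^w\varphi'(m+s)\,ds\ \ge\ c_0\int_0^w(1-m-s)\,ds\ =\ c_0\,w\bigl[(1-m)-w/2\bigr]\ \ge\ \frac{c_0}{2}\,w\,(1-m).$$
Combined with $(1+\varphi(v))(1+\varphi(m))\le 4$ this produces $g(x,w)\ge C_1(1-m(x;\eps))$, and by Proposition \ref{prop:atractiogran1} together with the asymptotics \eqref{m0-1} one has $1-m(x;\eps)\ge 1-m_0(x)\ge C_2\sqrt{|x|}$ uniformly in $\eps$ on $-L\le x\le -\eps^\la$.

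Inserting this lower bound and applying Gronwall yields, for $x\in[x_0,-\eps^{2/3}]$,
$$|w(x)|\ \le\ |w(x_0)|\,\exp\!\left(-\frac{2C_3}{3\eps}\bigl(|x_0|^{3/2}-|x|^{3/2}\bigr)\right).$$
Since $|x_0|\ge\eps^\la$ and $|x|\ge\eps^{2/3}$, the hypothesis $\la<2/3$ gives $\eps^{3\la/2}\gg\eps$ as $\eps\to 0$, so the exponent is of order $-C\,\eps^{3\la/2-1}\to-\infty$. The orbit is thus driven exponentially close to $\Lambda_\eps$ on this interval, the exponentially small bound being effectively attained as $x$ approaches $-\eps^{2/3}$. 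The main obstacle is precisely the one resolved in the third paragraph: pointwise, $\varphi'(\xi)$ can be arbitrarily small when $\xi$ is close to $1$, so the direct mean-value trick of the linear case fails. The remedy is to average $\varphi'$ over $[m,v]$ and observe that this average is governed by the distance $1-m$, which the previous propositions force to be $\gtrsim\sqrt{|x|}$.
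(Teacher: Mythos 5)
Your proposal is correct and follows essentially the same route as the paper: setting $w=v-m(x;\eps)$, extracting a nonnegative contraction coefficient controlled from below by $\varphi'(\xi)\gtrsim 1-\xi$ together with $1-m(x;\eps)\ge 1-m_0(x)\gtrsim\sqrt{|x|}$ (Proposition \ref{prop:atractiogran1} and \eqref{m0-1}), and integrating/Gronwalling to get the exponent $\sim\frac{1}{\eps}\bigl(|x_0|^{3/2}-|x|^{3/2}\bigr)$, which is large for $\la<2/3$. Your direct cancellation giving $\eps w'=-g(x,w)w$ with $g=\frac{2\int_0^1\varphi'(m+sw)\,ds}{(1+\varphi(v))(1+\varphi(m))}$ is just a cleaner form of the paper's linear-plus-remainder ($F$, $A$) decomposition — the coefficients coincide since $1-2x+\eps m'=\frac{2}{1+\varphi(m)}$ — so the argument is the same in substance.
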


\proof
To proof this proposition,  we perform the change of variables 
$ w=v-m(x;\eps)$ in equation \eqref{eq:orbits} obtaining:
\be \label{eq:orbitsw}
\eps \frac{d \, w}{d\, x} = -g(x;\eps)\varphi'(m(x;\eps))w-g(x;\eps) F(x,w;\eps)
\ee
where 
\[
F(x,w;\eps) =\varphi(m(x;\eps)+w)-\varphi(m(x;\eps))-\varphi '(m(x;\eps)) w.
\]
and where $g(x;\eps)$ is the positive function:
\[
g(x;\eps)=     \frac{-2x+1+\eps m'(x;\eps)}{1+\varphi(m(x;\eps)+w(x;\eps))}.
\]

Note that we already know the existence of the solution $w(x;\eps)=1-m(x;\eps)$ for $x_0 \le x$,  satisfying:
\[
0\le w(x;\eps)\le 1-m(x;\eps).
\]
For this reason we use the notation $g(x;\eps)$ even if this function depends on $w(x;\eps)$.

In the sequel, we will use the following expression for the function $F(x,w;\eps)$:
\begin{equation}\label{eq:F}
|F(x,w;\eps)|=  A(x;\eps)w, \quad A(x;\eps)= \int _{0}^{1}\varphi '(m(x;\eps)+ s w(x;\eps)) ds - \varphi' (m(x;\eps)) ds.
\end{equation}
It is important to stress that as $w(x;\eps)\ge 0$ and $\varphi '$ is decreasing in the considered domain, the function $A(x;\eps)$ is negative.

Clearly, the solution of \eqref{eq:orbitsw} with initial condition $w(x_0) = 1-m(x_0;\eps)$ can be written as:
$$
w(x)= e^{-\frac{1}{\eps}\int _{x_0}^{x} g(s;\eps)\varphi'(m(s;\eps))ds}\left[w(x_0)- \frac{1}{\eps}\int_{x_0}^{x}
e^{\frac{1}{\eps}\int _{x_0}^{\nu} g(s;\eps)\varphi'(m(s;\eps))ds}g(\nu;\eps) F(\nu, w(\nu;\eps))d\nu \right] .
$$

Defining $\tilde w (x;\eps) = e^{\frac{1}{\eps}\int _{x_0}^{x} g(s;\eps)\varphi'(m(s;\eps))ds} w(x;\eps)$, and using \eqref{eq:F} we obtain:
$$
|\tilde w(x;\eps)|\le  | w(x_0)|+ 
\frac{1}{\eps}\int_{x_0}^{x} |g(\nu;\eps) A(\nu;\eps) \tilde w(\nu;\eps) |d\nu
=  | w(x_0)|-
\frac{1}{\eps}\int_{x_0}^{x} g(\nu;\eps) A(\nu;\eps) |\tilde w(\nu;\eps) |d\nu .
$$

Applying Gronwall's lemma we get:
$$
|\tilde w(x;\eps)|\le  | w(x_0)|e^{- 
\frac{1}{\eps}\int_{x_0}^{x} g(\nu;\eps) A(\nu;\eps) d\nu}
$$
and therefore
$$
|w(x;\eps)|\le  | w(x_0)|e^{- 
\frac{1}{\eps}\int_{x_0}^{x} g(\nu;\eps) (A(\nu;\eps) +\varphi'(m(\nu;\eps))d\nu}=
| w(x_0)|e^{- 
\frac{1}{\eps}\int_{x_0}^{x} g(\nu;\eps) (\int _{0}^{1}\varphi '(m(\nu;\eps)+ s w(\nu;\eps)) ds) d\nu} .
$$
To bound this last expression we use the following facts:
\begin{itemize}
\item
For $x\le 0$ we have that
$
g(x;\eps )\ge \frac{1}{2}
$
\item
Given $0<\delta <1$, there exist constants $c_1$, $c_2$, such that
 for $0<v \le 1- \delta$ one has:
 $$
 c_1(1-v) \le \varphi'(v) \le c_2 (1-v).
 $$
 \end{itemize}
 
 Using \eqref{eq:varietatconfinada} and \eqref{m0-1}, one has that  $|m+sw-1| \le \delta$ and therefore we have, for $x \le -\eps ^\la$:
\begin{eqnarray*}
|w(x;\eps)|&\le & | w(x_0)|e^{- 
\frac{c_1}{2\eps}\int_{x_0}^{x}  (\int _{0}^{1}(1-m(\nu;\eps)+ s w(\nu;\eps)) ds) d\nu}
=| w(x_0)|e^{- 
\frac{c_1}{2\eps}\int_{x_0}^{x}  ((1-m(\nu;\eps)+ \frac{w(\nu;\eps)}{2} ) ) d\nu}\\
&\le & 
| w(x_0)|e^{- 
\frac{c_1}{2\eps}\int_{x_0}^{x}  (1-m(\nu;\eps) ) d\nu} 
\le  
| w(x_0)|e^{- 
\frac{c_1}{2\eps}\int_{x_0}^{x}  (1-m_0(\nu) ) d\nu}
\le | w(x_0)|e^{- 
\frac{\bar c_1}{2\eps}(|x_0|^{3/2}-|x|^{3/2})}
\end{eqnarray*}
where we have used \eqref{eq:boundsmprime}. Now, these bounds guarantee that the solution $w(x;\eps)$ exists for $x_0< x\le -\eps ^\la$ and verifies the same bounds.
\endproof

\subsubsection{Asymptotics for the Poincar\'{e} map $P_\eps$}

Fix $0<\la <2/3$.  
After Theorem \ref{thm:fenichel} and propositions \ref{blocouter}, \ref{prop:blocinner} and 
\ref{prop:atractiogran}, we can conclude that the Poincar\'{e} map $\PP_\eps$ is defined in the set 
$[-L, -\eps ^\la]\times \{\eps\}$. Moreover
\begin{equation}\label{ppepsilon}
\forall x \in [-L,-\eps ^\la], \quad \PP_\eps (x) = \eps ^{2/3} \eta _0 (0)+ O(\eps).
\end{equation}
Taking into account that,  by \eqref{eq:ppg}
$$
P^{-1}(-\eps ^{\la}) = x^-_0+\alpha ^- \eps + \beta ^- \eps ^{2\la}+O( \eps ^{1+\la})
$$ 
we have that, calling $\II= [L^-,x^-_0+\alpha ^- \eps + \beta ^- \eps ^{2\la}+O( \eps ^{1+\la})]$,
$$
P(\II) \subset [-L, -\eps ^\la]
$$ 
where $P(L^-)=-L$.

On the other hand we know that the map $\bar P$ is given by formulas \eqref{eq:ppg}.

Therefore we conclude that the map $P_\eps = \bar P\circ \PP_{\eps} \circ  P$
\begin{equation}\label{pepsilon}
\begin{array}{rcl}
P_\eps:\II \times \{y=y_0\} &\to& \Sigma ^+_{y_0} \\
(x,y_0) & \mapsto &(P_\eps(x), y_0)  
\end{array}
\end{equation}
is given by
$$
P_\eps(x)= \bar P(\eps ^{2/3}\eta _0(0)+ O(\eps)) =x^+_0 +\alpha ^+ \eps +\beta^+ (\eta_0(0))^2\eps ^{4/3} + O(\eps^{5/3}).
$$ 
Therefore, all the points in the interval   $\II$ 
are send by $P_\eps$ to an interval 
$\JJ$ which has, at most, size $\OO(\eps^{5/3})$ and it is centered at the point 
$x^+_0 +\alpha ^+ \eps +\beta^+ (\eta_0(0))^2\eps ^{4/3}$. 
Consequently, the Lipchitz constant of $P_\eps$ is, at most $O(\eps ^{5/3})$.

\subsection{The slow manifold close to $(0,1)$: smooth $\CC^{p-1}$ case}\label{sec:spc}
When the regularizing function $\varphi$ is $\CC^{p-1}$ with $p\ge 3$, the slow manifold has the same qualitative behavior explained in the previous section. In this section we will stress the main quantitative differences between the $\CC^1$ case and the general $\CC^{p-1}$ case.

The expansion of the solution 
$$
x = n(v;\eps)= n_0(v)+\eps n_1(v)+ \cdots + \OO(\eps ^n)
$$
is exactly the same  as in \eqref{expansio1} and \eqref{expansio2} but now, the local behavior of the terms in this expansion is different.
Without loss of generality we assume in this section that  $p$ is even and that $\varphi^{(p)}(1)< 0$. The case  $p$ odd is identically treated with $\varphi ^{(p)}(1)>0$.

We will have that, near $v=1$, using that
$$
\varphi (v)= 1 + \frac{\varphi ^{(p)}(1)}{p!} (v-1)^p+ \OO((v-1)^{p+1}), \ v\le 1
$$
one has
\begin{eqnarray}
n_0(v) &=& \frac{\varphi^{(p)}(1)} {4 p!}(v-1)^p + \OO((v-1)^{p+1}) \label{nop}\\
n_1(v)&=&\OO\left(\frac{1}{(1-v)^{p-1}}\right) \nonumber \\
n_2(v)&=& \OO\left(\frac{1}{(1-v)^{3p-2}}\right) \nonumber \\
n_3(v)&=&\OO\left(\frac{1}{(1-v)^{5p-3}}\right) \nonumber \\
\end{eqnarray}
in general we have:
$$
n_l(v)=\OO\left(\frac{1}{(v-1)^{(2l-1)p-l}}\right)
$$
therefore, the asymptotic expansion for $n(v;\eps)$ close to $v=1$ behaves as
$$
n(v;\eps) =  \frac{\varphi^{(p)}(1)}{4 p!}(v-1)^p  +  
\OO\left(\frac{\eps }{(v-1)^{p-1}}\right) + \OO\left(\frac{\eps ^2}{(v-1)^{3p-2}}\right)+\dots +
\OO\left(\frac{\eps ^l}{(v-1)^{(2l-1)p-l}}\right)$$
and this  expansion looses its asymptotic character for
$$
(v-1)^{2p-1} =\OO(\eps)
$$
which indicates that the invariant manifold  is close to $x
=n_0(v)$ until 
$v=1-\OO(\eps^\frac{1}{2p-1})$.
Next proposition, whose proof is completely analogous to proposition \ref{blocouter}, gives rigorously this behavior

\begin{proposition}\label{blocouterp}
Take any $0<\lambda_1<\frac{1}{2p-1}$.
Then, there exists $M>0$ big enough,  $\delta=\delta (M)>0$ small enough  and  
$\eps_0=\eps_0(M,\delta)>0$,   such that, for $0\le \eps\ \le \eps _0$, 
 any solution of system \eqref{eq:slow} which enters the set
$$
\mathbf{B^p} =  \left\{ (x,v), \ -\delta <v-1<-\eps^{\lambda_1} , \ n_0(v) \le x\le n_0(v)+ \frac{M \eps }{|v-1|^{p-1}}\right\}
$$
leaves it through the boundary $v=1-\eps ^{\la_1}$.
\end{proposition}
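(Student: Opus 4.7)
The plan is to mimic the argument of Proposition \ref{blocouter} ($p=2$), adapted to the correct asymptotic scaling $(1-v)^{2p-1} = O(\eps)$ that is dictated by the expansion terms $n_l(v) = O((1-v)^{-(2l-1)p+l})$. As in the $p=2$ case, I will show that the vector field \eqref{eq:fast} points strictly inward across three of the four boundaries of $\mathbf{B^p}$, and that $\dot v > 0$ throughout the box; this forces each orbit entering $\mathbf{B^p}$ to exit through the remaining boundary $v = 1 - \eps^{\lambda_1}$.

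The two easy boundaries are handled exactly as before. On the lower graph $\{x = n_0(v)\}$ the vector field reduces to $(\eps(1+\varphi(v))/2,\,0)$, which points inward since $1+\varphi(v)>0$ for $v>-1$. On the left edge $\{v = 1-\delta\}$ one has $v' = \tfrac{1}{2}(1+2x+\varphi(v)(2x-1))$; using the identity $1+2n_0(v)+\varphi(v)(2n_0(v)-1)=0$ (immediate from \eqref{expansio1}) together with $x \ge n_0(v)$ and $1+\varphi(v)>0$, this is non-negative and strictly positive in the interior of $\mathbf{B^p}$, so the flow points inward and moreover $\dot v > 0$ holds throughout the box, giving the exit through $v = 1-\eps^{\lambda_1}$.

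The main work is the upper boundary $\mathbf{B^p}^{+} = \{x = n_0(v) + M\eps/(1-v)^{p-1}\}$, with outer normal $n^* = \bigl(1,\,-n_0'(v) - M\eps(p-1)/(1-v)^p\bigr)$. Using the same identity $1+2n_0(v)+\varphi(v)(2n_0(v)-1)=0$, evaluation on $\mathbf{B^p}^{+}$ gives
\begin{equation*}
1+2x+\varphi(v)(2x-1) = \frac{2M\eps(1+\varphi(v))}{(1-v)^{p-1}},
\end{equation*}
so after factoring $\eps(1+\varphi(v))/2 > 0$, the condition $\langle Z_\eps, n^*\rangle < 0$ reduces to
\begin{equation*}
1 - \frac{2M\,n_0'(v)}{(1-v)^{p-1}} - \frac{2M^2(p-1)\,\eps}{(1-v)^{2p-1}} < 0.
\end{equation*}
From \eqref{nop} one has $n_0'(v) = \frac{\varphi^{(p)}(1)}{4(p-1)!}(v-1)^{p-1} + O((1-v)^p)$, so $n_0'(v)/(1-v)^{p-1} \to (-1)^{p-1}\varphi^{(p)}(1)/(4(p-1)!)$ as $v\to 1^-$, which has a fixed sign (opposite to $(-1)^{p-1}\varphi^{(p)}(1)$). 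In the generic case treated above, this sign is such that the leading contribution equals
\begin{equation*}
1 + \frac{M\,|\varphi^{(p)}(1)|}{2(p-1)!}\cdot(\pm 1) + (\text{errors}),
\end{equation*}
and we choose the sign convention on $\varphi^{(p)}(1)$ so that the contribution is negative and large in $M$.

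Finally I have to absorb the errors. There are two: the remainder $C\delta$ coming from replacing $n_0'(v)/(1-v)^{p-1}$ by its limit (bounded by Taylor's theorem on $-\delta \le v-1 \le -\eps^{\lambda_1}$), and the last term $2M^2(p-1)\eps/(1-v)^{2p-1}$. The key point — and the essential reason the exponent $\lambda_1 < 1/(2p-1)$ is the right one — is that inside $\mathbf{B^p}$ one has $(1-v)^{2p-1} \ge \eps^{(2p-1)\lambda_1}$, hence $\eps/(1-v)^{2p-1} \le \eps^{1-(2p-1)\lambda_1}$, which tends to zero as $\eps \to 0$. Therefore, choosing first $M$ large enough so that the leading constant dominates $1$ by a fixed margin, then $\delta$ small enough to absorb the Taylor error $C\delta$, and finally $\eps_0=\eps_0(M,\delta)$ small enough so that $M^2\eps^{1-(2p-1)\lambda_1}$ is below that margin, yields $\langle Z_\eps,n^*\rangle < 0$ uniformly on $\mathbf{B^p}^{+}$. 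The main obstacle is precisely this juggling of constants — identical in spirit to the $p=2$ case but where one must track the new scaling $\eps/(1-v)^{2p-1}$ — while carefully using the definite sign of $\varphi^{(p)}(1)$ relative to the parity of $p$ to ensure the leading term has the desired sign.
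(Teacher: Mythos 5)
Your proposal is correct and follows essentially the same route as the paper, which proves Proposition \ref{blocouterp} exactly by repeating the isolating-block argument of Proposition \ref{blocouter} with the normal vector $\bigl(1,-n_0'(v)-M\eps(p-1)/(1-v)^{p}\bigr)$, the leading term $n_0'(v)/(1-v)^{p-1}\to(-1)^{p-1}\varphi^{(p)}(1)/(4(p-1)!)>0$ under the stated sign convention, and the crucial bound $\eps/(1-v)^{2p-1}\le\eps^{1-(2p-1)\lambda_1}$, choosing $M$, then $\delta$, then $\eps_0$ in that order. Only a cosmetic slip: the limit of $n_0'(v)/(1-v)^{p-1}$ has the \emph{same} sign as $(-1)^{p-1}\varphi^{(p)}(1)$ (which is positive by the paper's convention), not the opposite one, and your subsequent use of it is indeed the correct one.
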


Then the invariant manifold $\Lambda_\eps$,   which is given by 
$$
x=n(v;\eps)=n_0(v)+ \eps n_1(v) + \OO(\eps ^2),
$$ 
with $n_1(1-\delta)>0$, enters in the domain ${\bf B^p}$ and   it stays there at least until $v^*=1-\eps ^{\lambda_1}$ satisfying:
$$
n_0(1-\eps ^{\lambda_1}) < n(1-\eps^{\lambda_1};\eps)
< n_0(1-\eps ^{\lambda_1})+ M\eps ^{1-(p-1)\lambda_1}. 
$$
As the manifold attracts exponentially any  solution beginning in $U$ (see Theorem \ref{thm:fenichel}), all the solutions 
of the system verify the same inequality.

Moreover, as $n_0(1-\eps ^{\lambda})= \frac{\varphi^{(p)}(1)}{4 p!}\eps ^{p\lambda}+ O(\eps ^{(p+1)\lambda})$
one has that, for any solution $x(v)$ beginning in $U$:
$$
x(1-\eps^{\lambda}) =\frac{\varphi^{(p)}(1)}{4 p!}\eps ^{p\lambda}+ O(\eps ^{(p+1)\lambda},\eps^{1-(p-1)\lambda})
$$
for any $0<\la \le \la _1 <1/(2p-1)$.

For  $v=1-\OO(\eps ^{1/(2p-1)})$,  $n(v;\eps)=\OO(\eps^{p/(2p-1)})$. Therefore, in this case,  we perform the change:
\begin{eqnarray*}
v=1+\eps ^{1/(2p-1)}u\\
x= \eps ^{p/(2p-1)} \eta  .
\end{eqnarray*}
The equation for the orbits \eqref{eq:orbitsc1} in these new variables is:
\begin{equation}\label{eq:manifoldinnerp}
\frac{d \eta}{du}=\frac{\eps^{p/(2p-1)}(1+\varphi(1+\eps ^{1/(2p-1)}u))}{\left(1+2\eps^{p/(2p-1)}\eta(u)+\varphi(1+\eps ^{1/(2p-1)}u)(2\eps^{p/(2p-1)}\eta(u)-1)\right)}.
\end{equation}
Calling $\mu = \eps ^{1/(2p-1)}$, one can write this equation as:
\begin{equation}\label{eq:manifoldinner}
\frac{d \eta}{du}=\mu ^p\frac{(1+\varphi(1+\mu u))}{\left(1+2\mu^p\eta(u)+\varphi(1+\mu u)(2\mu ^p\eta(u)-1)\right)},
\end{equation}
and we need to study the extension of a solution of this equation $\eta(u;\eps)$, with initial condition  $\eta (u^*;\eps)$, 
with $u^*= \frac{v^*-1}{\eps ^{1/(2p-1)}}=-\eps ^{\lambda_2-1/(2p-1)}$, for 
$0<\la _2\le \la _1 <1/(2p-1)$,
verifying
\begin{equation}\label{eq:initialconditioninner}
|\eps^{p/(2p-1)}\eta (u^*;\eps)-n_0(v^*)| \le M\eps ^{1-(p-1)\lambda_2}
\end{equation}
where $v^*= 1+\eps ^{1/(2p-1)}u^*=1-\eps^{\lambda_2}$, to the domain:
\begin{equation}\label{dominiiiner}
u^*\le u \le 0.
\end{equation}

Expanding the solution $\eta(u;\eps)$ of equation \eqref{eq:manifoldinner} in powers of $\mu=\eps ^{1/(2p-1)}$, 
one can see that  $\eta _0$ is the solution of the equation:
\begin{eqnarray}\label{eq:edoeta0p}
\eta _0'= \frac{d \eta_0}{d u}= \frac{2}{4\eta _0 -\frac{\varphi^{(p)}(1)}{p!}u^p}.
\end{eqnarray}

We need to study equation \eqref{eq:edoeta0p} to obtain an analogous result as the one given in \cite{MischenkoR80} for 
equation \eqref{eq:edoeta0}. 
With  the changes of variables:
$\bar \eta = \alpha \eta, \quad \bar u = \beta u$, where 
$$
\alpha =2^{\frac{p-2}{2p-1}}\left(-\frac{\varphi ^{(p)}(1)}{p!}\right)^{\frac{1}{2p-1}} , 
\quad \beta = 2^{-\frac{3}{2p-1}}\left(-\frac{\varphi ^{(p)}(1)}{p!}\right)^{\frac{2}{2p-1}}
$$
it becomes
\begin{equation}\label{eq:etaobarp}
\frac{d \, \bar \eta}{d \,\bar u}= \frac{1}{\bar \eta + \bar u ^{p}}.
\end{equation}

\begin{proposition}\label{prop:asymptoticsp}
Equation \eqref{eq:etaobarp} has a unique solution $\bar \eta _0(\bar u)$ verifying:
\begin{equation}
\bar \eta _0(\bar u) = -\bar u^p - \frac{1}{p}\frac{1}{ \bar u^{p-1}} + O( \frac{1}{\bar u^{3p-2}}), \quad  \bar u \to -\infty
\end{equation}
Moreover, there exists a constant $K>\frac{1}{p}$ such that:
\begin{equation}
-\bar u^p <\bar \eta_0 (\bar u) < -\bar u ^p -\frac{K}{\bar u^{p-1}} , \quad \bar u\le 0 .
\end{equation}
\end{proposition}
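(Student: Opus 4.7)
The plan is to reduce the problem to finding a distinguished solution of a scalar ODE with a saddle-like structure, in the spirit of the classical treatment of $p=2$ in \cite{MischenkoR80}. First I would perform the substitution $\xi = \bar\eta + \bar u^p$, which transforms equation \eqref{eq:etaobarp} into
\begin{equation*}
\xi' = p\,\bar u^{p-1} + \frac{1}{\xi}.
\end{equation*}
The singular locus $\bar\eta = -\bar u^p$ becomes the line $\xi = 0$. Balancing the two terms on the right-hand side identifies an \emph{instantaneous equilibrium curve} $\xi^\ast(\bar u) := -1/(p\bar u^{p-1})$, which is strictly positive for $\bar u<0$ since $p$ is even. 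Because $\partial_\xi(p\bar u^{p-1}+1/\xi) = -1/\xi^2 <0$, the curve $\xi^\ast$ is attracting in forward $\bar u$-time and repelling as $\bar u\to -\infty$, so heuristically only a single trajectory can shadow $\xi^\ast$ all the way to $-\infty$; this trajectory will be our $\bar\eta_0$.

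The next step is a rigorous barrier argument. For $C>0$ and $U>0$ large enough, define
\begin{equation*}
\xi_\pm(\bar u) = -\frac{1}{p\bar u^{p-1}} \pm \frac{C}{|\bar u|^{3p-2}},
\end{equation*}
and consider the region $\mathcal{R}_U = \{\bar u \le -U,\ \xi_-(\bar u) \le \xi \le \xi_+(\bar u)\}$. The exponent $3p-2$ is dictated by the variational equation around $\xi^\ast$: writing $\xi = \xi^\ast + \zeta$ and linearising, one finds $\zeta' \approx -p^2 \bar u^{2p-2}\zeta - (p-1)/(p\bar u^p)$, whose algebraic particular solution is $\zeta_p \sim -(p-1)/(p^3 \bar u^{3p-2})$. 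I would then check, by computing the scalar product of the vector field with the outer normals to $\xi = \xi_\pm$, that $\mathcal{R}_U$ is forward-invariant in $\bar u$ and that its width shrinks to zero as $\bar u\to -\infty$.

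Existence of $\bar\eta_0$ follows by a shooting argument: the family $\{\xi_{(T)}\}_T$ with $\xi_{(T)}(-T)=\xi^\ast(-T)$ eventually enters $\mathcal{R}_U$ and admits a locally uniform limit $\xi_0$; the contraction of $\mathcal{R}_U$ at $-\infty$ forces uniqueness. Alternatively, for two solutions $\xi_1,\xi_2$ both asymptotic to $\xi^\ast$, the difference $w=\xi_1-\xi_2$ satisfies $w' = -w/(\xi_1\xi_2)$ with coefficient of order $-p^2\bar u^{2p-2}$, so any non-zero $w$ grows super-polynomially as $\bar u \to -\infty$, contradicting decay. To obtain the two-sided bound on all of $(-\infty,0]$, I would use two global barriers. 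The lower barrier is $\xi=0$, which cannot be crossed because the right-hand side of the $\xi$-equation blows up to $+\infty$ there, giving $\bar\eta_0 > -\bar u^p$. For the upper one, fix $K>1/p$; on the curve $\xi = -K/\bar u^{p-1}$ a direct calculation yields
\begin{equation*}
\xi' - \left(-\frac{K}{\bar u^{p-1}}\right)' = \bar u^{p-1}\!\left(p-\tfrac{1}{K}\right) - \frac{K(p-1)}{\bar u^p} < 0 \quad \text{for all } \bar u<0,
\end{equation*}
so solutions that lie below the curve for $\bar u$ very negative (which holds by the asymptotic expansion) remain below it, giving the stated upper inequality.

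The main technical obstacle will be the barrier verification in the second step, where three competing contributions -- the forcing $p\bar u^{p-1}$, the reciprocal $1/\xi$ expanded to the required order, and the derivative of the correction $\pm C/|\bar u|^{3p-2}$ -- must cancel to leading order and leave a remainder of definite sign. Carrying out this expansion uniformly in $p$, while keeping track of the parity of $p$ and the sign assumption $\varphi^{(p)}(1)<0$ through the rescaling constants $\alpha,\beta$, is appreciably more intricate than for $p=2$; once it is done, the shooting and global-barrier steps are essentially routine.
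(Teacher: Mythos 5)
Your argument is correct, and its second half (the global bound) is essentially the paper's: in the variable $\xi=\bar\eta+\bar u^p$ your strip $0<\xi<-K/\bar u^{p-1}$ with $K>1/p$ is exactly the isolating block $\BB$ used in the paper, and your boundary computation $\bar u^{p-1}(p-\tfrac1K)-K(p-1)\bar u^{-p}<0$ is the same inequality the paper checks (there written as $1-Kp-K^2(1-p)\bar u^{-(2p-1)}<0$, using $p$ even). Where you genuinely diverge is in the existence, uniqueness and refined asymptotics of the distinguished solution. The paper compactifies at infinity, setting $w=\bar\eta+\bar u^p$, $\sigma=1/\bar u$ and rescaling time, so that the quasi-equilibrium at $\bar u=-\infty$ becomes a partially hyperbolic fixed point of the planar system \eqref{central}; existence and the expansion $w=g(\sigma)=-\sigma^{p-1}/p+\OO(\sigma^{3p-2})$ then come from the Center Manifold Theorem, and uniqueness from the overflowing character of the center manifold for $\sigma<0$ (\cite{Carr81,Sijbrand85}). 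You instead stay with the scalar nonautonomous equation $\xi'=p\bar u^{p-1}+1/\xi$ and build explicit sub/supersolutions $\xi^*\pm C|\bar u|^{-(3p-2)}$ around the nullcline curve, getting existence by shooting/compactness inside the shrinking funnel and uniqueness from the sign of the coefficient in $w'=-w/(\xi_1\xi_2)$ (a Gronwall-in-backward-time argument); your leading-order bookkeeping is right, and the funnel closes provided $C>(p-1)/p^3$ and $U$ is large. Your route is more elementary and yields the two-term asymptotics with an explicit error directly, at the price of the careful barrier expansion you flag; the paper's route outsources that work to center-manifold theory, which in exchange gives a smooth invariant graph (hence, in principle, a full asymptotic expansion) and a cleaner uniqueness statement. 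Two small points to finish your version: working with the scalar ODE you should note that the solution actually continues to $\bar u=0$ (e.g.\ from $\xi'\le 1/\xi$, so $(\xi^2)'\le 2$), something the paper gets for free by using the regular planar field $(\dot{\bar\eta},\dot{\bar u})=(1,\bar\eta+\bar u^p)$, which also desingularizes the locus $\xi=0$; and, as you already remark, the odd-$p$ case needs the sign changes recorded in the paper's subsequent remark.
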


\begin{figure}
\begin{center}
\includegraphics[width=10cm]{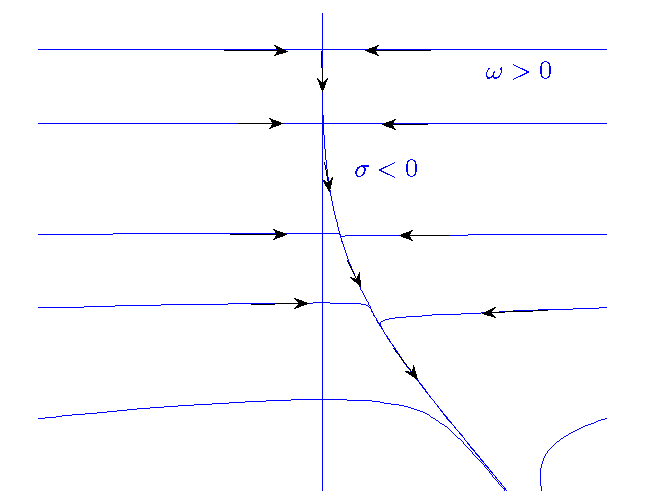}
\end{center}
\caption{The central invariant manifold of system \eqref{central}.}
\label{riccatiinfinit}
\end{figure}

\begin{proof}
To proof this proposition we consider the vector field whose orbits are solutions of \eqref{eq:etaobarp}:
\begin{eqnarray*}
 \dot{\bar \eta} &=&1\\
\dot{\bar u}&=&\bar \eta +\bar u^p
\end{eqnarray*}
for $u\le 0$ and $\eta \le 0$.

As the curve $\bar \eta + \bar u^p=0$ is a isocline of slope zero, we will see that the region
$$
\BB=\{ (\bar u, \bar \eta), \ - \bar u^p\le  \bar \eta\le -\bar u^p- \frac{K}{\bar u^{p-1}} , \ \bar u<0\}
$$
is an isolating block in the region $\bar u<0$ as $\bar u \to -\infty$. 
The boundary 
$$
\BB^-=\{ (\bar u, \bar \eta), \   \bar \eta= -\bar u^p, \ \bar u<0\}
$$
is positively invariant because the vector field is given by $(1,0)$ and it points inwards $\BB$.
To see that $\BB^+$ is also positively invariant  we take the  exterior normal vector
$(1, p\bar u ^{p-1}+K(1-p)\bar u^p)$
and we need to check that
$$
<(1,\bar \eta + \bar u^p), (1, p\bar u ^{p-1}+\frac{K(1-p)}{\bar u^p})>_{|\BB^+} <0 ,
$$
that is:
$$
1-Kp-\frac{K^2 (1-p)}{\bar u ^{2p-1}}<0 .
$$
As we are assuming that $p$ is even,  the term $\frac{K^2 (1-p)}{\bar u ^{2p-1}}$ is positive, therefore the expression above is negative if we take $K >\frac{1}{p}$.

To prove the existence of the solution $\bar \eta _0(\bar u)$ we perform the changes:
$$
w= \bar \eta +\bar u ^p, \ \mbox{and} \ \sigma = \frac{1}{\bar u}
$$ 
obtaining:
$$
\begin{array}{rcl}
w' &=& 1+ \frac{p}{\sigma ^{p-1}} w\\
\sigma ' &=& -\sigma ^2 w
\end{array}
$$
for $w\ge 0$ and $\sigma \le 0$.
After a change of time (multiplying the equations by $-\sigma ^{p-1}$) one obtains an equivalent system whose orbits are the same:
\begin{equation}\label{central}
\begin{array}{rcl}
\frac{dw}{d \tau} &=& -p w -\sigma ^{p-1} \\
\frac{d \sigma}{d \tau} &=& \sigma ^{p+1} w
\end{array}
\end{equation}
whose equilibrium point $(0,0)$ corresponds to the null-cline $\bar \eta + \bar u ^p=0$ at $\bar u = -\infty$.
This equilibrium point is partially hyperbolic and the linearization of the vector field at $(0,0)$ is given by
$$
\begin{array}{rcl}
\frac{dw}{d \tau} &=& -p w  \\
\frac{d \sigma}{d \tau} &=& 0 .
\end{array}
$$
whose matrix has eigenvectors  $(1,0)$ and $(0,1)$ associated to the eigenvalues $-p$ and $0$.
One can apply to this point the Central Manifold Theorem \cite{Carr81} and we know that there exists a local invariant manifold which can be described by $\Lambda _c=\{ (w,\sigma), \ w= g(\sigma)\}$ with $g(\sigma)$ a $\CC^\infty$ function, in a neighborhood of $\sigma =0$ with $g(0)=g'(0)=0$ and which verifies:
$$
0= p g(\sigma)+\sigma^{p-1}+g(\sigma)g'(\sigma) \sigma^{p+1}, \ \forall \sigma
$$
which gives:
$$
g(\sigma)= -\frac{1}{p}\sigma^{p-1}+ \OO(\sigma^{3p-2}).
$$
On the central manifold $\Lambda _c$
we have that
$$
\sigma'=g(\sigma)\sigma^{p+1}= -\frac{1}{p} \sigma^{2p} + \OO(\sigma^{4p-1}).
$$
We see that, for $\sigma<0$, the central manifold $\Lambda_c$ is overflowing ($\sigma'<0$) and therefore it is unique \cite{Sijbrand85}. 
We conclude that  there is a unique solution $(w_0(\tau), \sigma_0(\tau))$ in $\sigma<0$ such that 
$$
(w_0(\tau), \sigma_0(\tau)) \to (0,0) \ \mbox {as} \ \tau \to -\infty .
$$
The situation is summarized in figure \ref{riccatiinfinit}.
Going back to the original variables $(\bar \eta, \bar u)$, we get that 
the unique central manifold enters the region 
$\{ (\bar \eta, \bar u), \ \bar \eta +\bar u^p >0, \ \bar u <0\}$.

Moreover, it has the asymptotic expression:
$$
\bar \eta_0 = -\bar u ^p -\frac{1}{p} \bar u^{1-p}+ \OO(\bar u ^{2-3p})
$$
but this solution for $\bar u$ near $-\infty$ is inside the block $\BB$, and we have seen that this block is positively invariant for the flow  if 
$K>\frac{1}{p}$. 
Therefore, if $K$ is big enough, the central manifold remains   $\BB$ until   $\bar u \le 0$. 
\end{proof}

\begin{remark}
If $p\ge 3$ is odd, the equivalent equation to \eqref{eq:etaobarp} is:
$$
\frac{d \, \bar \eta}{d \,\bar u}= \frac{1}{\bar \eta - \bar u ^{p}},
$$
and the block is given by
$$
\BB=\{ (\bar u, \bar \eta), \  \bar u^p\le  \bar \eta\le \bar u^p- \frac{K}{\bar u^{p+1}} , \ \bar u<0\}
$$
and is isolating for $K>\frac{1}{p}$.
The obtained solution verifies:
$$
\bar \eta (\bar u) = \bar u^p + \frac{1}{p}\frac{1}{ \bar u^{p-1}} + O( \frac{1}{\bar u^{3p-2}}), \quad  \bar u \to -\infty .
$$
\end{remark}

\bigskip

From proposition \ref{prop:asymptoticsp}, and using that:
$
\frac{2\al}{\beta}=\frac{4}{\al}=-\frac{\varphi^{(p)}(1)}{p! \beta ^p}$, we obtain that
$$
\frac{4\beta^p}{\al}=2 \al\beta^{p-1}=\frac{\varphi^{(p)}(1)}{p! },
$$
going back to our variables one has that $\eta_0(u)$ verifies:
\begin{eqnarray*}
\eta _0 ( u) &=& 
\frac{\varphi ^{(p)}(1)}{4\,p!} u^p + \frac{2(p-1)!}{\varphi ^{(p)}(1) \, }u^{1-p} +  O(u^{2-3p}),  u \to -\infty\\
\frac{\varphi ^{(p)}(1)}{4\,p!} u^p 
&<& \eta_0(u) <\frac{\varphi ^{(p)}(1)}{4\, p!}u^p +\frac{2K(p-1)!}{\varphi^{(p)}(1)}u^{1-p}, \quad u\le 0
\end{eqnarray*}
with $k>\frac{1}{p}$.
As a consequence of this expansion and the asymptotic expansion of $n_0(v)$ near $v=1$ given in \eqref{nop}, one has that there exist constants $K
_1$, $K_2$, such that
$$
|\eps ^{p/(2p-1)}\eta _0(u^*)-n_0(v^*)| \le K_1 \eps ^{\lambda_2} + K_2 \eps ^{1-\lambda_2(p-1)},
$$
and therefore, by \eqref{eq:initialconditioninner} one has, as in \eqref{initialconditionmatching}:
\begin{equation}\label{initialconditionmatchingp}
|\eps ^{p/(2p-1)}\eta (u^*;\eps)-\eps ^{p/(2p-1)}\eta _0(u^*)| \le M \eps ^{1-\lambda_2}+K_1 \eps ^{\lambda_2} + K_2 \eps ^{1-\lambda_2},
\end{equation}
On the other hand, if one consider the next term in the expansion of $\eta (u,\eps)$, one has:
$$
\eta (u;\eps)=\eta _0(u)+\mu \eta _1(u)+O(\mu^2)
$$ 
where $\eta _1(u)$ is the solution of the equation:
$$
\eta '_1(u)= -\frac{8}{\left(4\eta _0(u)-\frac{\varphi^{(p)}(1)}{(p)!} u^{p}\right)^2} \eta _1 + \frac{2 \frac{\varphi^{(p+1)}(1)}{(p+1)!} u^{p+1}}{\left(4\eta _0(u)-2\frac{\varphi^{(p)}(1)}{(p)!} u^{p}\right)^2} 
$$
and one can see that the solution $\eta _1$ near $-\infty$ behaves as:
$$
\eta _1 (u) \simeq \frac{\varphi^{(p+1)}(1)}{4(p+1)!}u^{p+1} , \quad u \to -\infty
$$
and one can see the next proposition, whose proof is analogous to Proposition \ref{prop:blocinner}:

\begin{proposition}\label{prop:blocinnerp}
There exists $u_0>0$, and $K>0$, such that for $0<\eps  \le \eps _0$, the set
$$
{\bf B_2^p}=\{ (u,\eta), \ u^*\le u \le 0, \quad |\eta (u) -\eta _0(u)| \le \bar K\mu M(u)\}
$$
where $M(u)$ is the function defined by:
$$
M(u)= \left\{ 
\begin{array}{ll}
-u^{p+1} & -\infty \le u \le -u_0 <0 \\
u_0^{p+1} &-u_0 \le u \le 0
\end{array}\right.
$$
and $\mu = \eps ^{1/(2p-1)}$, is an isolating block for equation \eqref{eq:manifoldinnerp}.
\end{proposition}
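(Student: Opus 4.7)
The plan is to adapt, step by step, the argument of Proposition \ref{prop:blocinner}, keeping the same three--boundary strategy but replacing the $p=2$ ingredients by their $\CC^{p-1}$ counterparts. Specifically, I would verify that the vector field associated to \eqref{eq:manifoldinner} points inwards on each of the faces
\[
\mathbf{B}_2^{p,\pm}=\{(u,\eta)\colon u^*\le u\le 0,\ \eta=\eta_0(u)\pm\bar K\mu M(u)\}
\]
and on the entry face $u=u^*$; the latter is immediate because there $u'>0$.

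On the upper face $\mathbf{B}_2^{p,+}$ the exterior normal is $n^{+}=(1,-\eta_0'(u)-\bar K\mu M'(u))$ and I would compute $E:=\langle v,n^{+}\rangle$ with $v=(\mu^p(1+\varphi(1+\mu u)),\,1+2\mu^p\eta+\varphi(1+\mu u)(2\mu^p\eta-1))$. Using $\varphi(1+\mu u)=1+\frac{\varphi^{(p)}(1)}{p!}(\mu u)^p+\OO((\mu u)^{p+1})$, substituting $\eta=\eta_0(u)+\bar K\mu M(u)$, and exploiting the inner equation \eqref{eq:edoeta0p} in the form $4\eta_0(u)-\frac{\varphi^{(p)}(1)}{p!}u^p=2/\eta_0'(u)$, the same algebraic reorganisation as in the $p=2$ case leads to
\[
E=-4\bar K\mu^{p+1}\,\eta_0'(u)\,M(u)+\bar g(u;\mu),
\]
with $\bar g$ collecting the Taylor remainders of $\varphi$ and cross--terms in $\mu^p\eta$ and $\mu M'$.

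By Proposition \ref{prop:asymptoticsp}, $\eta_0'(u)\sim \frac{\varphi^{(p)}(1)}{4(p-1)!}u^{p-1}$ as $u\to-\infty$ and stays bounded on $[-u_0,0]$. Since $\varphi^{(p)}(1)<0$ and $p$ is even, $\eta_0'(u)>0$ and $M(u)\ge 0$ throughout the block, so the leading term is strictly negative in the interior. Writing $G(u;\mu):=\bar g/(4\bar K\mu^{p+1}\,\eta_0'(u)\,M(u))$, my objective reduces to showing $|G(u;\mu)|<1$; splitting $[u^*,-u_0]$ and $[-u_0,0]$ and using $|\mu u|\le \mu |u^*|=\eps^{\lambda_2}$ together with $|\eta_0(u)|\le C|u|^p$ and $|\eta_0'(u)|\le C|u|^{p-1}$ for $u\le -u_0$, one would bound $|G|$ by $C(\bar K^{-1}+u_0^{-(p+1)})+o(1)$ as $\eps\to 0$. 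Fixing $\bar K$ and $u_0$ large enough, and then $\eps_0$ sufficiently small, yields the required strict inequality; the lower face $\mathbf{B}_2^{p,-}$ is handled symmetrically, the sign flip in $\eta-\eta_0$ reversing the sign of the dominant contribution accordingly.

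The main obstacle I foresee is the uniform control of $\bar g$ over the full range $u\in[u^*,0]$. The polynomial growth of $\eta_0$ and $\eta_0'$ as $u\to-\infty$ produces substantially more competing monomials $u^k\mu^\ell$ to be balanced against the dominant $\mu^{p+1}|u|^{2p-1}$ than in the $p=2$ case, and the cutoff at $u=-u_0$ forces a case split since $M'$ is discontinuous there. Once these bounds are arranged by first fixing $\bar K$ and $u_0$ large, then taking $\lambda_2<\lambda_1$ small (with $\lambda_1$ as in Proposition \ref{blocouterp}), and finally shrinking $\eps_0$ in terms of them, the proof concludes exactly as in Proposition \ref{prop:blocinner}.
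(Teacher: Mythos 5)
Your proposal is correct and follows essentially the paper's own route: the paper gives no separate argument for this proposition, stating only that its proof is analogous to Proposition \ref{prop:blocinner}, and your adaptation (same three boundaries, same normal-vector computation using the Taylor expansion of $\varphi$ at $1$ and the inner equation \eqref{eq:edoeta0p}, leading term $-4\bar K\mu^{p+1}\eta_0'(u)M(u)$ with the remainder controlled by fixing $\bar K$ and $u_0$ first and then shrinking $\eps_0$) is precisely that analogy carried out. The only slip is cosmetic: in the bound for $G$ the $u_0$-dependent contribution is of order $u_0^{-(2p-1)}$ (matching $u_0^{-3}$ when $p=2$) rather than $u_0^{-(p+1)}$, which does not affect the conclusion.
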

Once we have that ${\bf B_2^p}$ is an isolating block and that, by \eqref{initialconditionmatchingp}, the solution $x(v,\eps)$ enters in it at $v=v^*$ we have that our solution crosses the line $v=1$ at a point verifying:
$$
x(1;\eps)= \mu ^p\eta_0(0)+ O(\mu ^{p+1}) = \eps ^{p/(2p-1)} \eta _0(0)+ O(\eps^{(p+1)/(2p-1)})
$$

\subsubsection{Exponential attraction of the whole neighborhood of the fold}

As we did in section \ref{sec:exponential} we now see that the invariant  manifold attracts all the points in the section 
$\{ (x,v), \ v=1, \ -L\le x\le -\varepsilon^\la\}$ for $0<\lambda <\frac{p}{2p-1}$. 
We point out the main differences in this case.
The expansion
$$
m(x;\eps)= m_0(x)+\eps m_1(x)+ \eps ^2 m_2(x)+\dots
$$
behaves now as
$$
m_0(x)=\varphi ^{-1}(\frac{1+2x}{1-2x})=\varphi ^{-1}(1+2x+4x^2 +\dots)
$$
on the other hand $\varphi (v)= 1+\frac{\varphi^{(p)}(1)}{p!}(v-1)^p+\OO(v-1)^{p+1}$ and therefore we obtain that:
$$
m_0(x) =1 +\OO( |x|^{\frac{1}{p}}), 
\quad m_1(x)=\OO(|x|^{\frac{1-p}{p}})
$$
Looking at these terms one can guess that the asymptotic expansion for $m(x;\eps)$ will fail at 
$x=\OO(\eps ^{\frac{p}{2p-1}})$.
\begin{proposition}\label{varietatconfinadap}
Consider $-L<-N<0$ and $0<\la <\frac{p}{2p-1}$.
Then, there exists $K>0$ and $\varepsilon_0>0$, such that, if $0\le \varepsilon<\varepsilon _0$ 
the invariant manifold $v=m(x;\eps)$ verifies, for $-L\le x\le -\eps ^{\la}$:
\be\label{eq:varietatconfinadap}
m_0(x)+\frac{\eps K}{x ^{\frac{2p-2}{p}}}\le m(x;\eps)\le m_0(x)
\ee
\end{proposition}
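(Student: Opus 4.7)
The proof will adapt the strategy of Proposition \ref{prop:atractiogran1} to the $\CC^{p-1}$ setting. The plan is to exhibit an isolating block containing $\Lambda_\eps$ in the range $-L \le x \le -\eps^\la$, and to show that the flow of the fast system \eqref{eq:fast} can only leave it through the right-hand boundary $x = -\eps^\la$.

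First I would define
\[
\tilde B = \left\{(x,v)\,:\, -L \le x \le -\eps^\la,\ m_0(x) + \frac{\eps K}{x^{(2p-2)/p}} \le v \le m_0(x)\right\}
\]
(with $x^{(2p-2)/p}$ understood as $\operatorname{sign}(x)|x|^{(2p-2)/p}$, so the $\eps$-correction is negative in the region considered) and check positive invariance along three of its four sides. On the upper side $v = m_0(x)$ the slow manifold identity $1+2x+\varphi(m_0(x))(2x-1)=0$ forces the $v$-component of $Z_\eps$ to vanish while the $x$-component $\eps(1+\varphi(m_0(x)))/2$ stays positive; combined with $m_0'(x) > 0$, which follows from \eqref{nop} and the sign of $\varphi^{(p)}(1)$, the pairing with the exterior normal $(-m_0'(x),1)$ is strictly negative. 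On the left side $x=-L$ one has $\dot x = \eps(1+\varphi(v))/2 > 0$, which again traps the flow.

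The main work is on the lower side $\tilde B^-$. Writing $v = m_0(x) + \eps K / x^{(2p-2)/p}$, I would Taylor expand $\varphi(v)$ around $m_0(x)$, cancel the $\OO(1)$ terms against the slow manifold identity, and use \eqref{vpm} to identify the dominant $\OO(\eps)$ term in $\langle Z_\eps, n\rangle$ as a negative multiple of $\eps/|x|^{(p-1)/p}$. The local asymptotics coming from \eqref{nop},
\[
m_0(x)-1 = \OO(|x|^{1/p}),\qquad m_0'(x) = \OO(|x|^{-(p-1)/p}),
\]
then let me bound each higher-order contribution (the $\eps^2$ terms coming from differentiating the lower boundary, and the Taylor remainder $h(x;\eps) = \OO(|x|^{(p-2)/p}\,(\eps K/|x|^{(2p-2)/p})^2)$ from $\varphi$ near $v=1$) by a constant times the dominant term multiplied by $\eps/|x|^{(2p-1)/p}$. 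The hypothesis $\la < p/(2p-1)$ makes this ratio at most $\eps^{1-(2p-1)\la/p} \to 0$, so by choosing $K$ large enough (depending on $L$) and then $\eps_0$ small enough the pairing is strictly negative on $\tilde B^-$.

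Finally I would verify that $\Lambda_\eps$ is already inside $\tilde B$ at some fixed $x = -N$. By Fenichel's theorem and the regular expansion $m(x;\eps) = m_0(x) + \eps m_1(x) + \OO(\eps^2)$ valid on $[-L,-N]$, with $m_1(x) = -\tfrac12 (m_0'(x))^2 < 0$ (cf.\ \eqref{m0}), one has $m_0(x) - m(x;\eps) = \tfrac{\eps}{2}(m_0'(x))^2 + \OO(\eps^2)$; since $(m_0'(-N))^2 N^{(2p-2)/p}$ is a finite constant by \eqref{nop}, enlarging $K$ to dominate it places $\Lambda_\eps$ inside $\tilde B$ at $x = -N$, after which positive invariance propagates it inside $\tilde B$ down to $x = -\eps^\la$. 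The main obstacle, just as in the $\CC^1$ proof, is the bookkeeping on $\tilde B^-$: the dominant scale changes from $\eps/\sqrt{|x|}$ in that case to $\eps/|x|^{(p-1)/p}$ here, and the threshold $\la < p/(2p-1)$ is precisely the one needed to absorb all higher-order terms in the Taylor expansion of $\varphi$ around $m_0(x)$.
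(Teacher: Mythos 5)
Your proof takes essentially the same route as the paper's: its own argument is precisely the adaptation of Proposition \ref{prop:atractiogran1} to the block \eqref{blocconfinadap}, checking positive invariance on the same three boundaries and replacing the $\CC^1$ bound \eqref{eq:boundsmprime} by the one obtained from \eqref{vpm}. Your local asymptotics $m_0'(x)=\OO(|x|^{-(p-1)/p})$, with dominant scale $\eps/|x|^{(p-1)/p}$ on $\tilde B^-$ and the higher-order terms absorbed via $\la<\frac{p}{2p-1}$, is the correct one, and in fact corrects the exponent misprinted as $1/p$ in \eqref{eq:boundsmprimep} (the two coincide only for $p=2$).
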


\proof
The proof follows the same lines that proposition \ref{prop:atractiogran1}, proving that the set
\be\label{blocconfinadap}
\tilde B=\{ (x,v), \ -L\le x\le -\eps ^\la, \quad m_0(x)+\frac{\eps K}{x ^{\frac{2p-2}{p}}}\le m(x;\eps)\le m_0(x)
\}
\ee
is positively invariant for system \eqref{eq:slow}.
Now, instead of \eqref{eq:boundsmprime}, we will use  \eqref{vpm}, which gives that  there exist $C_1$, $C_2$:
\be \label{eq:boundsmprimep}
\frac{C_1}{|x|^{\frac{1}{p}}} \le m'_0(x)\le \frac{C_2}{|x|^{\frac{1}{p}}}, \ \text{for}\quad -L\le x<0.
\ee
\endproof

Next step is to see that the manifold $\Lambda_\eps$ attracts all the solutions with initial conditions at points $(x_0,1)$, if $-L\le x_0\le -\eps^\la$.

\begin{proposition}\label{prop:atractiogranp}
Fix $0<\la<\frac{p}{2p-1}$ and take any  point $(x_0,1)$, with  $-L\le x_0\le -\eps^\la$.
Then, the orbit of system \eqref{eq:orbits} with initial condition $v(x_0)=1$ stays exponentially close to the 
invariant manifold $v=m(x;\eps)$ in the region $x_0\le x< -\eps^{\frac{p}{2p-1}}$.
\end{proposition}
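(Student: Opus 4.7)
My plan is to mirror, step by step, the argument in the $\mathcal{C}^1$ case (Proposition \ref{prop:atractiogran}), adapting each key estimate to the order-$p$ flatness of $\varphi$ at $v=1$. As a first move, setting $w = v - m(x;\eps)$ in equation \eqref{eq:orbits} reduces the problem to the scalar linear-plus-correction ODE \eqref{eq:orbitsw},
$$\eps\,\frac{dw}{dx} = -g(x;\eps)\,\varphi'(m(x;\eps))\,w - g(x;\eps)\,F(x,w;\eps),$$
with $F(x,w;\eps) = A(x;\eps)\,w$ and $A(x;\eps) = \int_0^1 \bigl[\varphi'(m(x;\eps)+sw) - \varphi'(m(x;\eps))\bigr]ds \le 0$ (since $\varphi'$ is monotone decreasing on $[m,1]$ and $w\ge 0$ along the solution). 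An integrating factor combined with Gronwall's lemma, reproduced verbatim from Proposition \ref{prop:atractiogran}, gives
$$|w(x;\eps)| \le |w(x_0)|\,\exp\!\left(-\frac{1}{\eps}\int_{x_0}^{x} g(\nu;\eps)\int_0^1 \varphi'(m(\nu;\eps)+s\,w(\nu;\eps))\,ds\,d\nu\right).$$

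The substantive work is to extract from this the correct exponential rate for general $p$. From \eqref{eq:fipropdeu} one has $\varphi'(v) \simeq \frac{|\varphi^{(p)}(1)|}{(p-1)!}(1-v)^{p-1}$ near $v=1$, so there exist $c_1>0$ and $\delta>0$ with $\varphi'(v) \ge c_1(1-v)^{p-1}$ for $1-\delta \le v \le 1$. On the geometric side, Proposition \ref{varietatconfinadap} together with \eqref{eq:boundsmprimep} yields $1-m(\nu;\eps) \ge C\,|\nu|^{1/p}$ on the range of interest, while $1 - m(\nu;\eps) - s\,w(\nu;\eps) \ge (1-s)(1-m(\nu;\eps)) \ge 0$ keeps the integrand of the right sign. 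Combining these estimates with $g(\nu;\eps) \ge 1/2$ for $\nu \le 0$, the Gronwall exponent is bounded below by
$$\frac{\tilde c}{\eps}\int_{x_0}^{x}|\nu|^{(p-1)/p}\,d\nu \;=\; \frac{\bar c}{\eps}\left(|x_0|^{(2p-1)/p}-|x|^{(2p-1)/p}\right)$$
for suitable $\tilde c, \bar c > 0$.

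Reading off the desired smallness is then immediate: under the hypothesis $|x_0| \ge \eps^{\la}$ with $\la < \frac{p}{2p-1}$ and $|x| \le \eps^{p/(2p-1)}$, one has $|x|^{(2p-1)/p}/\eps \le 1$ while $|x_0|^{(2p-1)/p}/\eps \ge \eps^{\la(2p-1)/p - 1} \to \infty$ as $\eps \to 0$, so $|w(x;\eps)|$ is exponentially small in a negative power of $\eps$, uniformly in the stated range.

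The only real obstacle compared with the $\mathcal{C}^1$ case is bookkeeping the two exponents: the power $(p-1)$ coming from the flatness of $\varphi'$ near $1$, and the power $(2p-1)/p$ arising after integrating against $|\nu|^{1/p}$, the analog of the square-root-like behavior $1-m_0(x) \simeq C|x|^{1/p}$ of the slow manifold near the fold. Once these are tracked, the threshold $\la < p/(2p-1)$ appears naturally as precisely the condition forcing the Gronwall exponent to diverge, and the remainder of the argument transcribes line by line from Proposition \ref{prop:atractiogran}.
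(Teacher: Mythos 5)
Your proposal is correct and follows essentially the same route as the paper: the paper's proof of Proposition \ref{prop:atractiogranp} is precisely the Gronwall argument of Proposition \ref{prop:atractiogran} with $w=v-m(x;\eps)$, the bound $c_1(1-v)^{p-1}\le\varphi'(v)\le c_2(1-v)^{p-1}$ near $v=1$, the estimate $1-m_0(\nu)\gtrsim|\nu|^{1/p}$ from Proposition \ref{varietatconfinadap} and \eqref{eq:boundsmprimep}, and the resulting exponent $\frac{\bar c_1}{2\eps}\bigl(|x_0|^{\frac{2p-1}{p}}-|x|^{\frac{2p-1}{p}}\bigr)$, exactly as you obtain. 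Your only cosmetic deviation is the convexity step $1-m-sw\ge(1-s)(1-m)$ (the paper simply drops the $sw$ term) and the phrasing of the final range of $x$, neither of which affects the argument or the threshold $\la<\frac{p}{2p-1}$.
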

\proof
The  proof of  this proposition is also similar to proposition \ref{prop:atractiogran}, 
performing  the change of variables 
$ w=v-m(x;\eps)$ in equation \eqref{eq:orbitsc1}  and using Gronwall lemma to bound $w$ we arrive to:
$$
|w(x;\eps)|\le | w(x_0)|e^{- 
\frac{1}{\eps}\int_{x_0}^{x} g(\nu;\eps) (\int _{0}^{1}\varphi '(m(\nu;\eps)+ s w(\nu;\eps)) ds) d\nu}
$$
To bound this last expression we use the following facts:
\begin{itemize}
\item
For $x\le 0$ we have that
$
g(x;\eps )\ge \frac{1}{2}
$
\item
Given $0<\delta <1$, there exist constants $c_1$, $c_2$, such that
 for $|v-1| \le \delta$ one has:
 $$
 c_1(1-v)^{p-1} \le \varphi'(v) \le c_2 (1-v)^{p-1}
 $$
 \end{itemize}
Obtaining:
\begin{eqnarray*}
|w(x;\eps)|&\le & | w(x_0)|e^{- 
\frac{c_1}{2\eps}\int_{x_0}^{x}  (\int _{0}^{1}(1-m(\nu;\eps)+ s w(\nu;\eps))^{p-1} ds) d\nu}
=| w(x_0)|e^{- 
\frac{c_1}{2\eps}\int_{x_0}^{x}  ((1-m(\nu;\eps)+ \frac{w(\nu;\eps)}{2} ) )^{p-1} d\nu}\\
&\le & 
| w(x_0)|e^{- 
\frac{c_1}{2\eps}\int_{x_0}^{x}  (1-m(\nu;\eps) )^{p-1} d\nu} 
\le  
| w(x_0)|e^{- 
\frac{c_1}{2\eps}\int_{x_0}^{x}  (1-m_0(\nu) )^{p-1} d\nu}
| w(x_0)|e^{- 
\frac{\bar c_1}{2\eps}(|x_0|^{\frac{2p-1}{p}}-|x|^{\frac{2p-1}{p}})}
\end{eqnarray*}
\endproof
and then, if $x_0< x \le -\eps ^{\frac{p}{2p-1}}$ the orbits gets exponentially close to the invariant manifold.

\subsubsection{Asymptotics for the Poincar\'{e} map $P_\eps$}

Fix $0<\la <{\frac{p}{2p-1}}$.  
After Theorem \ref{thm:fenichel} and propositions \ref{blocouterp}, \ref{prop:blocinnerp} and 
\ref{prop:atractiogranp}, 
we can conclude that the Poincar\'{e} map $\PP_\eps$ is defined in the set 
$[-L, -\eps ^\la]\times \{\eps\}$. Moreover
\begin{equation}\label{ppepsilonp}
\forall x \in [-L,-\eps ^\la], \quad \PP_\eps (x) =  \eps ^{ \frac{p}{2p-1} } \eta _0(0)+ \OO(\eps ^{\frac{p+1}{2p-1}}).
\end{equation}
Taking into account that,  by \eqref{eq:ppg}
$$
P^{-1}(-\eps ^{\la}) = x^-+\alpha ^- \eps + \beta ^- \eps ^{2\la}+O( \eps ^{1+\la})
$$ 
we have that 
$$
P(\II)) \subset [-L, -\eps ^\la]
$$ 
where $\II=[L^-,x^-+\alpha ^- \eps + \beta ^- \eps ^{2\la}+\OO( \eps ^{1+\la})]$ and  $L^- =P^{-1}(-L)$.

On the other hand we know that the map $\bar P$ is given by formulas \eqref{eq:ppg}.

Therefore we conclude that the map $P_\eps = \bar P\circ \PP_{\eps} \circ  P$
\begin{equation}\label{pepsilonp}
\begin{array}{rcl}
P_\eps: \II \times \{y=y_0\} 
&\to& \Sigma ^+_{y_0} \\
(x,y_0) & \mapsto &(P_\eps(x), y_0)  
\end{array}
\end{equation}
is given by
$$
P_\eps(x)= \bar P(\eps ^{\frac{p}{2p-1}}\eta _0(0)+ \OO(\eps ^{\frac{p+1}{2p-1}}) 
=x^+ +\alpha ^+ \eps +\beta^+ (\eta_0(0))^2\eps ^{\frac{2p}{2p-1}} 
+ \OO(\eps^{\frac{3p-1}{2p-1}}, \eps^{\frac{p(p+1)}{(2p-1)^2}}).
$$ 
Therefore, all the points in the set  $\II\times \{y_0\}$ 
are send by $P_\eps$ to a set 
$\JJ\times \{y_0\}$ and the interval $\JJ$  has, at most, size $\OO(\eps^{\frac{3p-1}{2p-1}},  
\eps^{\frac{p(p+1)}{(2p-1)^2}})$ and it is centered at the point 
$x^+ +\alpha ^+ \eps +\beta^+ (\eta_0(0))^2\eps ^{\frac{2p}{2p-1}}$. 

Consequently, the Lipchitz constant of $P_\eps$ is, at most $\OO(\eps^{\frac{3p-1}{2p-1}},  
\eps^{\frac{p(p+1)}{(2p-1)^2}})$.

\begin{remark}

The results of Theorem \ref{thm:main} and Proposition \ref{prop:flowtangency} lead to two facts. 
On one hand we obtain that the Poincar\'{e} map $P_\eps$ has a domain which includes a region at distance $\OO(\eps ^\lambda)$ to the stable 
pseudoseparatrix $W^s_+(0,0)$ of the  fold, this is an improvement of previous results where one only needs to control the solutions, 
and therefore the Poincar\'{e} map, up to finite distance to the fold.
On the other hand, we only obtain that the Lipchitz constant of this map is of order 
$\OO(\eps^{\frac{3p-1}{2p-1}}, \eps^{\frac{p(p+1)}{(2p-1)^2}})$. In fact, one can see that this Lipchitz constant is exponentially 
small with respect to $\eps$ (see \cite{KrupaS01, Bonet87}) but this is not necessary for our purposes. 
The method to obtain this more accurate result, consists in applying the results of propositions \ref{blocouter} and \ref{prop:blocinner} only 
to follow the evolution of the Fenichel manifold to show that it intersects  the section $v=1$ in a point $(x(\eps),1)$ given by 
$\PP_\eps (x)$ in \eqref{ppepsilon}. 
Once we know the evolution of this invariant manifold $\Lambda_\eps$, one can show, studying the variational equations around it, 
that all the orbits begining at $(1,x)$ with $x\le -\eps ^\la$, evolve exponentially close to it. 
Nevertheless, in our case, the only needed result is the fact that all these orbits arrive to the section $v=1$ at a point 
which is``on the left" of the unstable pseudoseparatrix  $W^u_+(0,0)$ of the fold point, and these accurate quantitative results are not necessary.
\end{remark}

\subsection{The general fold}\label{sec:generalfold}

In the previous sections we have rigorously computed the Poincar\'{e} map $P_\eps$ on the sections $\Sigma^{\pm}_{y_0}$ as a composition of three maps:
$$
P_\eps = P\circ \PP_\eps \circ \bar P
$$
The maps $P$ and $\bar P$ were studied for a generic vector field $\X$ having a tangency point at  $(0,0)$ in Proposition \ref{prop:flowtangency} 
giving formulas \eqref{eq:ppg}, but the singular map $\PP_\eps$ was computed using singular perturbation 
theory in a simplified vector $Z=(\X,\Y)$ in \eqref{def:X}, \eqref{def:Y}, coming from a normal form in \cite{GuardiaST11}. 
Nevertheless, as our method needs differentiability  properties, we can not claim that the results obtained  are automatically valid for any Filipov 
vector field with a regular-fold visible point. For this reason, in this section we will consider the case of  a general vector field and we will 
point out the main technicalities to obtain  the same result as in  \eqref{ppepsilon}.

So, let as assume that we have the non smooth system \eqref{def:Filippov}, and we assume that $\X$ has a visible fold at $(0,0)$ and $\Y$ is pointing 
towards $\Sigma$. Assume also that conditions \eqref{generalform},\eqref{cond:visiblefold}, \eqref{cond:visiblefold1} are verified.
The first simplification of the vector field $Z$ is provided by the classical flow-box theorem applied to the vector $\Y$. 
Applying the change of variables to  both vector fields defining  $Z$, we obtain:
\begin{proposition}
There exists a smooth change of variables $(x,y) = \hat \psi ( \hat x, \hat y)$, where $\hat \psi: U\subset \RR^2 \to \RR^2$ verifying 
$ \hat \psi ( \hat x,0) =( \hat x, 0)$,
such that, if we call 
$ \hat Z (\hat x, \hat y)=  \hat \psi ^* Z (\hat x, \hat y)= (D \hat \psi (\hat x, \hat y))^{-1} Z\circ  \hat \psi (\hat x, \hat y)$ 
to  the transformed vector field, one has $ \hat Z=( \hat {\X},  \hat {\Y})$, and
\begin{itemize}
\item
$
 \hat {\Y} = (0,1)^t
$
\item
$
 \hat {\X} = (1+ O_1( \hat x, \hat y), 2 \hat x+ \hat b \hat y + O_2( \hat x, \hat y))^t
$, and  $O_2(\hat x,0)=0$.
\end{itemize}
\end{proposition}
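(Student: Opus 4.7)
The plan is to prove the proposition as a direct application of the classical flow-box theorem to the vector field $\Y$, together with a bookkeeping computation of how $\X$ transforms on $\Sigma$. Since $\Y_2(0,0)>0$ by \eqref{cond:visiblefold}, $\Y$ is transversal to $\Sigma=\{y=0\}$ near the origin, so I would take $\Sigma$ itself as the local transversal section and define
\[
\hat\psi(\hat x,\hat y):=\phi_{\Y}(\hat y;\,\hat x,0),
\]
where $\phi_{\Y}$ denotes the flow of $\Y$. The implicit function theorem ensures that $\hat\psi$ is a local $\CC^2$ diffeomorphism near the origin, and by construction $\hat\psi(\hat x,0)=(\hat x,0)$, so $\Sigma$ is preserved pointwise. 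The standard flow-box identity $\partial_{\hat y}\hat\psi=\Y\circ\hat\psi$ then translates into $\hat{\Y}=(D\hat\psi)^{-1}\Y\circ\hat\psi=(0,1)^t$, which gives the first bullet.

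To verify the structure of $\hat{\X}=(D\hat\psi)^{-1}\,\X\circ\hat\psi$, I would compute $D\hat\psi(\hat x,0)$ explicitly from the two defining properties of $\hat\psi$: the pointwise identity on $\Sigma$ forces the first column to be $(1,0)^t$, and the flow-box ODE forces the second column to be $\Y(\hat x,0)=(\Y_1,\Y_2)(\hat x,0)$. Inverting this $2\times 2$ matrix and applying it to $\X(\hat x,0)$ yields
\[
\hat{\X}(\hat x,0)=\Bigl(\X_1-\tfrac{\Y_1}{\Y_2}\X_2,\ \tfrac{\X_2}{\Y_2}\Bigr)(\hat x,0).
\]
By \eqref{generalform} and \eqref{cond:visiblefold1}, the first component takes the positive value $\X_1(0,0)$ at the origin, while the second component is smooth in $\hat x$ with a simple zero at $\hat x=0$ and nonzero derivative $\partial_x\X_2(0,0)/\Y_2(0,0)>0$. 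In particular the second component depends only on $\hat x$ when $\hat y=0$, which is precisely the content of the condition $O_2(\hat x,0)=0$; the transverse coefficient $\hat b=\partial_{\hat y}\hat{\X}_2(0,0)$ is read off from the one-jet of $\X$ in the $y$-direction composed with the Taylor expansion of $\hat\psi$ in $\hat y$.

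Finally, to bring the two leading coefficients to the canonical values $1$ and $2$, I would compose $\hat\psi$ with an additional near-identity diffeomorphism of the form $(\hat x,\hat y)\mapsto(\phi(\hat x),\hat y)$, together with a linear rescaling of the ambient coordinates; such changes preserve both $\hat{\Y}=(0,1)^t$ and the pointwise identity on $\Sigma$, and an application of the inverse function theorem to the smooth function $\X_2(\hat x,0)/\Y_2(\hat x,0)$ provides the $\phi$ that absorbs all higher-order terms in $\hat x$ on $\Sigma$ into a purely linear expression. The step requiring the most care is this last normalization, because the class of admissible changes preserving both $\hat{\Y}=(0,1)^t$ and the pointwise identity on $\Sigma$ is rather rigid (essentially reparameterizations of $\Sigma$ extended trivially in $\hat y$); however, the remaining freedom is exactly what is needed to straighten $\hat{\X}_2\lvert_\Sigma$ and, together with an overall rescaling, to fix the two leading constants, which is the only nontrivial bookkeeping in an otherwise routine argument.
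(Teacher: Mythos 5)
Your construction follows essentially the same route as the paper's own proof: straighten $\Y$ by the flow-box theorem in a way that preserves $\Sigma$ (the paper invokes the theorem abstractly and imposes $\psi(\tilde x,0)=(\tilde x,0)$; you write the explicit flow map $\phi_{\Y}(\hat y;\hat x,0)$, which is the same construction), read off $\hat{\X}$ on $\Sigma$ from $D\hat\psi(\hat x,0)$, and then absorb the higher-order $\hat x$-terms of the second component on $\Sigma$ by a reparameterization of the $\hat x$-variable alone. Your $\phi$ built from $\X_2(\hat x,0)/\Y_2(\hat x,0)$ is exactly the paper's final change $\hat x=\bar x+\tfrac12 f_2(\bar x)$, so the two key steps are the same. (Minor wording point: on $\Sigma$ the second component is trivially a function of $\hat x$ only; the actual content of $O_2(\hat x,0)=0$ is that after the reparameterization it is \emph{exactly} $2\hat x$ there, which is what your last step delivers.)

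The one concrete gap is the normalization of the two leading constants to $1$ and $2$. As you yourself note, any change keeping $\hat{\Y}=(0,1)^t$ and mapping $\Sigma$ into itself must be of the form $(\hat x,\hat y)\mapsto(\phi(\hat x),\hat y)$; hence a ``linear rescaling of the ambient coordinates'' is only free in the $\hat x$-direction (rescaling $\hat y$ destroys $\hat{\Y}=(0,1)^t$). With $\hat x=\alpha u$ the leading coefficients become $\X_1(0,0)/\alpha$ and $\alpha\,\partial_x\X_2(0,0)/\Y_2(0,0)$, and you cannot make these equal to $1$ and $2$ simultaneously unless $\X_1(0,0)\,\partial_x\X_2(0,0)/\Y_2(0,0)=2$ happens to hold. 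The paper resolves this by also rescaling time: with $a=\partial_x\X_2(0,0)/\Y_2(0,0)$ and $c=\X_1(0,0)$ it sets $\bar x=\tfrac a2\tilde x$, $\bar y=\tfrac{ac}2\tilde y$, $\tau=\tfrac{ac}2 t$, which preserves $(0,1)^t$ and fixes both constants; you need to add this constant time reparameterization (or an equivalent device) for your normalization step to go through. Relatedly, your claim that the final changes ``preserve the pointwise identity on $\Sigma$'' should be weakened to invariance of $\Sigma$ as a set: the reparameterization $\phi$ and the rescalings move points along $\Sigma$ (the paper's composed change has the same feature, despite the statement of the proposition).
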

\proof
The first part of the proof consists in applying the flow-box theorem to the vector field $\Y$. This theorem provides a smooth change of variables 
$(x,y)= \psi (\tilde x,\tilde y)$, where 
$\psi=(\psi _1,\psi_2)$, that transforms the vector field $\Y$ into  $\tilde {\Y} =(0,1)^t$. 
One can also ask the function $\psi $ to leave invariant a transversal manifold of the flow, that we choose to be $\Sigma$. 
Therefore this map verifies $\psi(\tilde x,0)=(\tilde x,0)$ and, consequently,  
$\frac{\partial \psi _1}{\partial \tilde x}(\tilde x, 0)=1 $, 
and $\frac{\partial \psi _2}{\partial \tilde x}(\tilde x, 0)=0 $.
Moreover, as 
$$
D\psi (0,0) \left(\begin{array}{c}0\\1\end{array}\right)=
\left(\begin{array}{c}\Y_1(0,0)\\ \Y_2(0,0)\end{array}\right)
$$
one has that $\frac{\partial \psi _2}{\partial \tilde y}(0, 0)=\Y_2(0,0)>0 $.
Now using that $D\psi (\tilde x,0) \tilde {\X} (\tilde x,0)= \X (\tilde x, 0)$, one obtains that 
$$
\tilde {\X}  (0,0)=\left(\begin{array}{c}c\\0\end{array}\right),
$$
with $c=\X_1(0,0)\neq 0$.
Moreover, 
$$
\tilde {\X_2} (\tilde x,0)=(\frac{\partial \psi _2}{\partial \tilde y} )^{-1}  (\tilde x,0)\X_2 (\tilde x,0)
$$
therefore the tangency  at $(0,0)$ is preserved and visible.
Once we have applied the flow box theorem, the new vector field $\tilde {\X}$ has the form
$$
\tilde {\X}=\left (\begin{array}{c} c+O_1(\tilde x,\tilde y)\\ a\tilde x+b\tilde y +O_2(\tilde x,\tilde y)\end{array}\right), 
$$
where $a= \frac{\partial _x \X_2(0,0)}{\Y_2(0,0)}>0$ and $c=\X_1(0,0)\neq 0$.
Now, the change of variables and time:
$$
\bar x = \frac{a}{2} \tilde x, \ \bar y = \frac{a c}{2} \tilde y , \ \tau = \frac{a c}{2} t
$$
transforms the vector field $\tilde Z$ into $\bar Z$ with $\bar {\Y}= \tilde {\Y}$ and:
$$
\bar {\X}=\left (\begin{array}{c}1+O_1(\bar x,\bar y)\\ 2\bar x+\bar b\bar y +O_2(\bar x,\bar y)\end{array}\right).
$$

To perform the last change, we observe that the second order terms in the second component of $\bar {\X}$ can be separated:
$$
O_2(\bar x,\bar y)= f_2(\bar x) + g_2(\bar x,\bar y), \quad g_2(\bar x,0)=0
$$
then, our last change is
$$
\hat x = \bar x + \frac{1}{2}f_2(\bar x).
$$
This change is well defined in a neighborhood of zero and leaves the vector field $\bar {\Y}$ invariant but changes $\bar{\X}$ into:
$$
\bar {\X}=\left(\begin{array}{c}1+O_1(\hat x,\hat y)\\ 2\hat x+\bar b\hat y +O_2(\hat x,\hat y)\end{array}\right), 
$$
but the term $O_2(\hat x,\hat y)$ vanishes at $y=0$ for any value of $\hat x$.
\endproof

This proposition  allows us to assume that we have a Filippov vector field $Z =(\X,\Y)$ where:
\begin{equation}\label{def:Xg}
\X(x,y)=\left(\begin{array}{l}
        1 + f_1(x,y)\\
        2x + by +f_2(x,y)
       \end{array}\right)
\end{equation}
where $f_i(x,y)=O_i(x,y)$ and $f_2(x,0)=0$, and
\begin{equation}\label{def:Yg}
\Y(x,y)=\left(\begin{array}{l}
        0\\
        1
       \end{array}\right)
\end{equation}
The system given by $\X$ has a visible fold at $(0,0)$ and $\Y$ is regular at this point. Therefore $(0,0)$ is a fold-regular point for $Z$. Moreover, it verifies  conditions \eqref{generalform},\eqref{cond:visiblefold}, \eqref{cond:visiblefold1}.

The regularized system \eqref{eq:regularized}  will be in the general case:
\begin{equation}\label{eq:regularizedg}
\begin{array}{rcl}
\dot x &=& 
\frac{1}{2}(1+\varphi (\frac{y}{\eps}))(1+ f_1(x,y))\\
\dot y &=& 
\frac{1+2x + by +f_2(x,y)}{2} +\frac{1}{2}\varphi (\frac{y}{\eps})(2x+by-1 + f_2(x,y)),
\end{array}
\end{equation}
and, in the variable $v=\frac{y}{\eps}$ we obtain:

\begin{equation}\label{eq:fastg}
\begin{array}{rcl}
\dot x &=& 
\frac{1+\varphi (v)}{2}(1+f_1(x, \eps v))\\
\eps \dot v &=&
 \frac{1+2x }{2} +\frac{1}{2}\varphi (v)(2x-1 ) +
\frac{1+\varphi(v)}{2}(b\eps v+ f_2(x,\eps v)).
\end{array}
\end{equation}

Observe that the slow system for $\eps =0$ is given by:
\begin{equation}\label{eq:fastg0}
\begin{array}{lcr}
\dot x &=& \frac{1+\varphi (v)}{2}(1+f_1(x, 0))\\
0 &=& \frac{1+2x }{2} +\frac{1}{2}\varphi (v)(2x-1 )
\end{array}
\end{equation}
and therefore the slow manifold $\Lambda_0$ is given in the general case by the same equation  
\eqref{SM} and the $DZ_0$ (see \eqref{eq:fast}) is exactly given by \eqref{matriunhm}.
Consequently it has the same hyperbolicity properties and Fenichel theorem \ref{thm:fenichel} can also be applied in the general case giving the existence of the invariant manifold given by $\Lambda _\eps =\{ (x,v), \ v=m(x;\eps)\}$ and also by $\Lambda _\eps =\{ (x,v), \ x=n(v;\eps)\}$ in the corresponding domains.

To study the invariant manifold near $(0,1)$ we proceed as we did in section \ref{smct01} looking for the equation of the orbits of 
$x=n(v;\eps)=n_0(v)+\eps n_1(v) +\dots$ as  Remark \ref{rem:canvivar} also applies here. 
We know that $n_0(v)$ is given by \eqref{expansio1} and easy computations give that
$$
n_1(v)= \frac{1}{2} \left( \frac{1+f_1 (n_0(v),0)}{n'_0(v)}-bv-\frac{\partial f_2}{\partial y}( n_0(v),0)v \right).
$$
Even if, in the general case,  the term $n_1(v)$ is different from \eqref{expansio2}, the behavior near $v=1$ is the same as in \eqref{n1}.
Therefore the behavior of the slow manifold near $v=1$ is also given in 
\eqref{no}, \eqref{n1} and one can easily prove  proposition \ref{blocouter} in the general fold case. 
The only thing to bear in mind is that even if $x=n_0(v)$ is no longer a isocline of zero slope, the flow in ${\bf B^-}$ also points inward $\bf{B}$. 
Moreover, to ensure that the Fenichel manifold not only enters in the block ${\bf B}$ when $v=1-\delta $
but exits it for $v=1-\eps ^{\la}$, $0<\la<\frac{1}{3}$, it is enough to see that $n(1-\delta ;\eps )>\bar n(1-\delta ;\eps )$ where $x=
\bar n(v;\eps )$ is the expression of the isocline of slope zero given by:
$$
\frac{1+2x +b\eps v+ f_2(x,\eps v)}{2} +\frac{1}{2}\varphi (v)(2x-1 +b\eps v+ f_2(x,\eps v))=0.
$$
To see this, we observe that 
$$
\bar n(v;\eps) = n_0(v)+\eps \bar n_1(v) + \OO(\eps ^2)
$$ 
with $\bar n_1(v)=-\frac{v}{2}(b+\frac{\partial f_2}{\partial y}(n_0(v),0))$, therefore:
$$
n_1(v)-\bar n_1(v)= \frac{1}{2}\frac{ 1+f_1(n_0(v),0)  }{n'_0(v)}.
$$
Now, using that $f_1(x,y)=\OO(x,y)$ and that $n_0(v)= \frac{\varphi''(1)}{8}(v-1)^2+ \OO((v-1)^3)$ near $v=1$, in a neighborhood of $(0,1)$ and that $n'_0(v)>0$ (see \eqref{expansio1}) we have that
$$
n_1(v)-\bar n_1(v)>0
$$
and  then $n(v;\eps)>\bar n(v;\eps)$.

Therefore the Fenichel manifold enters the region $\dot v>0$ and can not leave it. Also $n_1(v)>0$, and the Fenichel manifold enters inside the block ${\bf B}$ by $v=1-\delta$ and exits it at $v=1-\eps ^\la$, with $0<\la<1/3$.

When $v=1 -\OO(\eps ^{1/3})$ we proceed as usual, and the change \eqref{canviinner} transforms equations \eqref{eq:fastg} into:
\begin{equation}\label{eq:fastginner}
\begin{array}{rcl}
\eps^{-\frac{1}{3}}\dot \eta &=& 
\frac{1+\varphi (1+\eps ^{\frac{1}{3}} u )}{2}(1+f_1(\eps ^{\frac{2}{3}}\eta , \eps (1+\eps ^{\frac{1}{3}} u )))\\
\eps^{\frac{1}{3}} \dot u &=&
 \frac{1+2\eps ^{\frac{2}{3}}\eta  }{2} +\frac{1}{2}\varphi (1+\eps ^{\frac{1}{3}} u )(2\eps ^{\frac{2}{3}}\eta -1 ) +
\frac{1+\varphi(1+\eps ^{\frac{1}{3}} u )}{2}(b\eps (1+\eps ^{\frac{1}{3}} u )+ f_2(\eps ^{\frac{2}{3}}\eta ,\eps (1+\eps ^{\frac{1}{3}} u ))).
\end{array}
\end{equation}
The equation for the orbits calling $\mu = \eps ^{\frac{1}{3}}$, becomes:
\begin{eqnarray}
\frac{d \eta}{d u}=\frac{\mu ^2
(1+\varphi (1+\mu u ))(1+f_1(\mu^2\eta , \mu^3 (1+\mu u )))}
{(1+2\mu ^2\eta  ) +\varphi (1+\mu u )(2\mu ^2\eta -1 ) +
(1+\varphi(1+\mu  u ))(b\mu ^3 (1+\mu u )+ f_2(\mu^2\eta ,\mu^3 (1+\mu u )))}
\end{eqnarray}

Expanding $\eta (u) = \eta _0(u) + \mu \eta _1(u) + O(\mu ^2)$ one obtains, for $\eta _0$ the same equation \eqref{eq:edoeta0}.
For $\eta_1$, it appears a new term instead:
$$
\eta'_1=-\frac{8}{(4\eta _0-\frac{\varphi''(1)}{2}u^2)^2}\eta _1 + \frac{\varphi''(1)u^3}{3(4\eta _0-\frac{\varphi''(1)}{2}u^2)^2}
+\frac{2(b+\frac{\partial f_2}{\partial y}(0,0))}{(4\eta _0-\frac{\varphi''(1)}{2}u^2)^2}
$$
Nevertheless the asymptotic behavior at $-\infty$ is the same as \eqref{asineta1}:
$$
\eta_1\simeq \frac{\varphi''(1)}{24}u^3+\OO(u^4), \ u\to -\infty,
$$
then, Proposition \ref{prop:blocinner} also works, and we will arrive at $v=1$ having:
$$
x(1;\eps)= \eps ^{2/3}\eta _0(0)+ \OO(\eps).
$$

To see that the Fenichel manifold attracts points near $(0,1)$, concretely points of the section $\{ (x,v), \ v=1, \ -L\le x\le -\eps ^\la\}$,  $ 0<\la <2/3$, we proceed as we did in section \ref{sec:exponential} proving propositions \ref{prop:atractiogran1} and \ref{prop:atractiogran}.
The only thing to bear in mind, as Remark \ref{rem:canvivar} does, is that, in spite $v=m_0(x)$ is no longer a isocline of slope zero, the inequality
$$
m(x;\eps )<m_0(x)
$$
also is satisfied if the constant $L$ appearing in Fenichel theorem \ref{thm:fenichel} is small enough, but fixed.
The reason is that the term $m_1(x)$ in the expansion of the Fenichel manifold:
$$
m(x;\eps)=m_0(x)+\eps m_1(x)+\OO(\eps^2)
$$
is 
$$
m_1(x)=- \frac{2(1+\varphi(m_0(x)))^2(1+f_1(x,0))}{(\varphi'(m_0(x))(2x-1))^2}
-\frac{1}{2}
\frac{(1+\varphi(m_0(x)))m_0(x)}{\varphi'(m_0(x))(2x-1)}( b + \frac{\partial f_2}{\partial y}(x,0))
$$
and we know that $f_1(x,y)= \OO(x,y)$, therefore, for $x$ near $zero$, the dominant term in this expression is
$$
- \frac{2(1+\varphi(m_0(x)))}{(\varphi'(m_0(x))(2x-1))^2} <0
$$
in this region. So we can ensure that $m(x;\eps) <m_0(x)$.
On the other hand, if we consider the isocline of zero slope $v=\bar m(x;\eps)$ defined by:
$$
1+2x+\varphi(v)(2x-1)
+(1+\varphi(v))(b\eps v+f_2(x,\eps v))=0
$$
one obtains that
$$
\bar m(x;\eps) = m_0(x)-\frac{\eps}{2}\frac{(1+\varphi(m_0(x)))m_0(x)}{\varphi'(m_0(x))(2x-1)}( b + \frac{\partial f_2}{\partial y}(x,0))+ \OO(\eps^2),
$$
and therefore we also have $m(x,\eps)<\bar m(x,\eps)$.
With all these considerations, one can prove propositions
\ref{prop:atractiogran1} and \ref{prop:atractiogran} for the general fold case, obtaining the same formulas \eqref{pepsilon}
for the Poincar\'{e} map $P_\eps$ in this case.

\section*{Acknowledgements}
The authors have been partially supported by the Spanish MCyT/FEDER grand MTM2009-06973 and the Catalan SGR grant 2009SGR859. 
The authors also thank Enric Fossas, Robert Gri$\tilde{n}$\'{o} and Mike J Jeffrey and for useful discussion about Filippov systems and Toni Susin for his help with the figures.  

Finally, the first author wants to  dedicate this paper to the memory of Carmel Bonet Rev\'{e}s, 
posthumous PH D, for his exemplar tenacity in achieve his research.

\bibliography{references}

\bibliographystyle{alpha}

\end{document}